\DeclareMathOperator{\colim}{colim}
\DeclareMathOperator{\gl}{gl}
\DeclareMathOperator{\map}{map}
\DeclareMathOperator{\Id}{Id}
\DeclareMathOperator{\ev}{ev}
\DeclareMathOperator{\op}{op}
\DeclareMathOperator{\Rep}{Rep}
\newcommand{\mC}{{\mathbb C{}}}
\newcommand{\mN}{{\mathbb N}}
\newcommand{\mR}{{\mathbb R}}
\newcommand{\mZ}{{\mathbb Z}}
\newcommand{\Cc}{{\mathcal C}}
\newcommand{\Fc}{{\mathcal F}}
\newcommand{\Gc}{{\mathcal G}}
\newcommand{\Lc}{{\mathcal L}}
\newcommand{\Uc}{{\mathcal U}}
\newcommand{\Vc}{{\mathcal V}}
\newcommand{\Wc}{{\mathcal W}}
\newcommand{\Zc}{{\mathcal Z}}
\newcommand{\bL}{{\mathbf L}}
\newcommand{\bO}{{\mathbf O}}
\newcommand{\bT}{{\mathbf T}}
\newcommand{\spc}{spc}
\newcommand{\orbispc}{orbispc}
\newcommand{\iso}{\cong}
\newcommand{\tensor}{\otimes}
\newcommand{\xra}{\xrightarrow}
\newcommand{\xla}{\xleftarrow}
\newcommand{\GH}{{\mathcal{GH}}}
\newcommand{\td}[1]{\langle #1\rangle}
\newcommand{\gh}[1]{\llbracket #1\rrbracket}
\renewcommand{\to}{\longrightarrow}
\newcommand{\dslash}{\!\mathbin{/\mkern-11mu/}}
\numberwithin{equation}{section}
\newtheorem{theorem}[equation]{Theorem}
\newtheorem{prop}[equation]{Proposition}
\theoremstyle{definition}
\newtheorem{defn}[equation]{Definition}
\newtheorem{rk}[equation]{Remark}
\newtheorem{eg}[equation]{Example}
\newtheorem{construction}[equation]{Construction}
\begin{document}

\title[Orbispaces, orthogonal spaces, and the universal compact Lie group]
{Orbispaces, orthogonal spaces,\\ and the universal compact Lie group}

\date{\today; 2010 AMS Math.\ Subj.\ Class.: 55P91, 57S15}
\author{Stefan Schwede}
\address{Mathematisches Institut, Universit\"at Bonn, Germany}
\email{schwede@math.uni-bonn.de}

\begin{abstract}
This paper identifies the homotopy theories of topological stacks 
and orbispaces with unstable global homotopy theory.
At the same time, we provide a new perspective
by interpreting it as the homotopy theory of `spaces with an action
of the universal compact Lie group'.
The upshot is a novel way to construct and study 
genuine cohomology theories on stacks, orbifolds, and orbispaces,
defined from stable global homotopy types represented by orthogonal spectra.

The universal compact Lie group (which is neither compact nor a Lie group)
is a well-known object, namely the topological monoid $\mathcal L$
of linear isometric self-embeddings of $\mathbb R^\infty$.
The underlying space of $\mathcal L$ is contractible, and the homotopy theory
of $\mathcal L$-spaces with respect to underlying weak equivalences
is just another model for the homotopy theory of spaces.
However, the monoid $\mathcal L$ contains copies of all compact Lie
groups in a specific way, and we define global equivalences of $\mathcal L$-spaces
by testing on corresponding fixed points.
We establish a global model structure on the category of $\mathcal L$-spaces
and prove it to be Quillen equivalent
to the global model category of orthogonal spaces,
and to the category of orbispaces, i.e., presheaves of spaces
on the global orbit category.  
\end{abstract}

\maketitle

\section*{Introduction}

The purpose of this article is to identify the homotopy theory of topological stacks 
with unstable global homotopy theory.
At the same time, we provide a new perspective on this homotopy theory,
by interpreting it as the homotopy theory of `spaces with an action
of the universal compact Lie group'.
This link then provides a new way to construct and study 
genuine cohomology theories on stacks, orbifolds, and orbispaces.

Before describing the contents of this paper in more detail,
I expand on how global homotopy theory provides orbifold cohomology theories.
Stacks and orbifolds are concepts from algebraic geometry respectively
geometric topology that allow us to talk about objects
that locally look like the quotient of a smooth object by a group action,
in a way that remembers information about the isotropy groups of the action.
Such `stacky' objects can behave like smooth objects
even if the underlying spaces have singularities.
As for spaces, manifolds, and schemes, cohomology theories
are important invariants also for stacks and orbifolds,
and examples such as ordinary cohomology or $K$-theory lend themselves
to generalization. Special cases of orbifolds are `global quotients',
often denoted $M\dslash G$, for example for a smooth action of a compact
Lie group~$G$ on a smooth manifold~$M$. In such examples, the orbifold
cohomology of $M\dslash G$ is supposed to be the $G$-equivariant cohomology of~$M$.
This suggests a way to {\em define} orbifold cohomology theories by
means of equivariant stable homotopy theory, via suitable $G$-spectra.
However, since the group~$G$ is not intrinsic and can vary, one needs
equivariant cohomology theories for all groups~$G$, with some compatibility.
Global homotopy theory makes this idea precise.

As explained in~\cite{schwede-global}, the well-known 
category of orthogonal spectra models global stable
homotopy theory when studied with respect to {\em global equivalences},
the class of morphisms that induce isomorphisms of equivariant
stable homotopy groups for all compact Lie groups.
The localization is the {\em global stable homotopy category} $\GH$,
a compactly generated, tensor triangulated category that houses
all global stable homotopy types.
The present paper explains how to pass between stacks, orbispaces and
orthogonal spaces in a homotopically meaningful way; a consequence is
that every global stable homotopy type (i.e., every orthogonal spectrum)
gives rise to a cohomology theory on stacks and orbifolds.
Indeed, by taking unreduced suspension spectra,
every unstable global homotopy type is transferred into the triangulated
global stable homotopy category~$\GH$. In particular, taking morphisms
in~$\GH$ into an orthogonal spectrum~$E$ defines $\mZ$-graded 
$E$-cohomology groups. 
The counterpart of a global quotient orbifold~$M\dslash G$ in the global homotopy 
theory of orthogonal spaces is the semifree orthogonal space~$\bL_{G,V} M$
introduced in \cite[Con.\,1.1.22]{schwede-global}.
The morphisms $\gh{\Sigma^\infty_+ \bL_{G,V}M, E}^*$ in the global stable
homotopy category biject with the $G$-equivariant $E$-co\-homo\-logy groups of~$M$,
by an adjunction relating the global and~$G$-equivariant stable homotopy 
categories.
In other words, when evaluated on a global quotient~$M\dslash G$, 
our orbifold cohomology theory defined from a global stable homotopy type
precisely returns the $G$-equivariant cohomology of~$M$,
which is essentially the design criterion.
The cohomology theories defined in this way should be thought of as `genuine'
(as opposed to `naive').
Indeed, the global stable homotopy category forgets to
the $G$-equivariant stable homotopy category based on a complete $G$-universe;
the equivariant cohomology theories represented by such objects are
usually called {\em genuine} (as opposed to {\em naive}).
Genuine equivariant cohomology theories have more structure
than naive ones; this structure manifests itself in different forms, 
for example as transfer maps, stability under twisted suspension 
(i.e., smash product with linear representation spheres), 
an extension of the $\mZ$-graded cohomology groups to an $R O(G)$-graded theory,
and an equivariant refinement of additivity 
(the so called {\em Wirthm{\"u}ller isomorphism}).
Hence global stable homotopy types in the sense of \cite{schwede-global}
represent {\em genuine} (as opposed to `naive') orbifold cohomology theories.

\smallskip

Now we describe the contents of this paper in more detail.
There are different formal frameworks for 
stacks and orbifolds (algebro-geometric, smooth, topological), 
and these objects can be studied with respect to various notions of `equivalence'.
The approach that most easily feeds into our present context 
are the notions of {\em topological stacks}
respectively {\em orbi\-spaces} as developed by Gepner and Henriques
in \cite{gepner-henriques}. 
The present paper and the article \cite{koerschgen} by K{\"o}rschgen
together identify the Gepner-Henriques model 
with the global homotopy theory of orthogonal spaces
as established by the author in \cite[Ch.\,1]{schwede-global}.
The comparison proceeds through yet another model, the
global homotopy theory of `spaces with an action
of the universal compact Lie group'. 
This universal compact Lie group (which is neither compact nor a Lie group)
is a well-known object, namely 
the topological monoid $\Lc=\bL(\mR^\infty,\mR^\infty)$
of linear isometric embeddings of $\mR^\infty$ into itself.
To my knowledge, the use of the monoid~$\Lc$ in homotopy theory
started with Boardman and Vogt's paper~\cite{boardman-vogt-homotopy everything};
since then, $\Lc$-spaces have been extensively studied,
for example in~\cite{blumberg-cohen-schlichtkrull, EKMM, lind-diagram, may-quinn-ray}.
The underlying space of $\Lc$ is contractible, so the homotopy theory
of $\Lc$-spaces with respect to `underlying' weak equivalences
is just another model for the homotopy theory of spaces.
However, we shift the perspective on the homotopy theory that $\Lc$-spaces
represent, and use a notion of {\em global equivalences} 
of $\Lc$-spaces that is much finer than the notion of
underlying weak equivalence that has so far been studied. 

We will make the case that $\Lc$ has all the moral right to
be thought of as the universal compact Lie group.
Indeed, $\Lc$ contains a copy of every compact Lie group in a specific way: 
we may choose a continuous isometric linear $G$-action on $\mR^\infty$
that makes $\mR^\infty$ into a complete $G$-universe.
This action is a continuous injective homomorphism 
$\rho:G\to\Lc$, and we call the images $\rho(G)$ of such homomorphisms
{\em universal subgroups} of~$\Lc$, 
compare Definition~\ref{def-universal subgroup} below. 
In this way every compact Lie group determines a specific conjugacy class 
of subgroups of $\Lc$, abstractly isomorphic to $G$.
A morphism of $\Lc$-spaces is a {\em global equivalence}
if it induces weak homotopy equivalences on the fixed point
spaces for all universal subgroups of $\Lc$.
When viewed through the eyes of global equivalences,
one should think of an $\Lc$-space as a `global space' on which all compact Lie groups
act simultaneously and in a compatible way.

In Section~\ref{sec:L-spaces} we discuss the global homotopy theory
of $\Lc$-spaces, including many examples and the global model structure
(see Theorem \ref{thm:global L-spaces}).
We also discuss the operadic product of $\Lc$-spaces and show
that is fully homotopical for global equivalences
(see Theorem \ref{thm:box to times}) and satisfies the
pushout product property with respect to the global model structure
(see Proposition \ref{prop:global is monoidal}).

Section \ref{sec:L verses orbispaces} compares the global homotopy theory 
of $\Lc$-spaces to the homotopy theory of orbispaces. 
To this end we introduce the global orbit category~$\bO_{\gl}$
(see Definition \ref{def:O_gl}), the direct analog
for the universal compact Lie group
of the orbit category of a single compact Lie group:
the objects of $\bO_{\gl}$ are the universal subgroups of $\Lc$ and
the morphism spaces in~$\bO_{\gl}$ are defined by
\[ \bO_{\gl}(K,G)\ = \  \map^{\Lc}(\Lc/K,\Lc/G)  \ . \]
The main result is Theorem \ref{thm:L and orbispace}, describing a Quillen equivalence
between the category of $\Lc$-spaces under the global model structure and
the category of {\em orbispaces}, i.e., contravariant continuous functors
from the global orbit category to spaces.
This Quillen equivalence is an analog of Elmendorf's theorem~\cite{elmendorf-orbit} 
saying that taking fixed points with respect to all closed subgroups of $G$ 
is an equivalence from the homotopy theory of
$G$-spaces to functors on the orbit category.

Section \ref{sec:L vs orthogonal} compares 
the global homotopy theory of $\Lc$-spaces to the global homotopy theory
of orthogonal spaces as introduced in \cite[Ch.\,1]{schwede-global}.
The comparison is via an adjoint functor pair~$(Q\tensor_\bL-,\map^\Lc(Q,-))$ 
that was already used in a non-equivariant context (and with different terminology)
by Lind~\cite[Sec.\,8]{lind-diagram}.
We show in Theorem~\ref{thm:orthogonal vs L} 
that this functor pair is a Quillen equivalence with
respect to the two global model structures.

The following diagram of Quillen equivalences summarizes the results of this paper 
and the connection to the homotopy theory of orbispaces
in the sense of Gepner and Henriques \cite{gepner-henriques}:
\[\xymatrix@C=12mm{ 
 \text{orthogonal spaces}
  \quad \ar@<.4ex>[r]^-{Q\tensor_{\bL}-} 
 & 
\quad \Lc\bT\quad \ar@<-.4ex>[r]_-{\Phi} 
\ar@<.4ex>[l]^-{\map^\Lc(Q,-)} 
 & 
\ar@<-.4ex>[l]_-{\Lambda}
\quad   \text{orbispaces}  \ar@<.4ex>[r]\ar@<-.4ex>[r]
 \quad & 
 \quad \text{Orb-spaces} \ar@<.4ex>[l]\ar@<-.4ex>[l]
 }\]
On the left is the category of orthogonal spaces, i.e.,
continuous functors to spaces from the category 
of finite-dimensional inner product spaces and linear isometric embeddings,
equipped with the positive global model structure of \cite[Prop.\,1.2.23]{schwede-global}.
Next to it is the category of $\Lc$-spaces,
equipped with the global model structure of
Theorem \ref{thm:global L-spaces}.
Then comes the category of orbispaces in the sense of this paper,
i.e., contravariant continuous functors 
from the global orbit category $\bO_{\gl}$ to spaces;
orbispaces are equipped with a `pointwise' (or `projective') model structure.
Finally, on the right is the category of `Orb-spaces'
in the sense of Gepner and Henriques, i.e., contravariant continuous functors 
from the topological category Orb defined in \cite[Sec.\,4.1]{gepner-henriques},
for the family of compact Lie groups as allowed isotropy groups.
We establish the left and middle Quillen equivalence in 
Theorem \ref{thm:L and orbispace} respectively Theorem~\ref{thm:orthogonal vs L}. 
The right double arrows indicate a chain of two Quillen equivalences
established by K{\"o}rschgen in~\cite{koerschgen};
this chain arises from a zig-zag of Dwyer-Kan equivalences between 
our indexing category~$\bO_{\gl}$
and the category Orb used by Gepner and Henriques, see~\cite[1.1 Cor.]{koerschgen}.
In their paper \cite{gepner-henriques}, Gepner and Henriques furthermore
compare their homotopy theory of Orb-spaces 
to the homotopy theories of topological stacks and of topological groupoids.

{\bf Acknowledgments.} 
I would like to thank Andrew Blumberg, Benjamin B{\"o}hme, Mike Mandell, Peter May
and Alexander K{\"o}rschgen for various helpful discussions on the topics
of this paper.

\section{Global model structures for \texorpdfstring{$\Lc$}{L}-spaces}
\label{sec:L-spaces}

In this section we define global equivalences of $\Lc$-spaces
and establish the global model structure, see Theorem~\ref{thm:global L-spaces}.
In the following sections, we will show that this global model structure
is Quillen equivalent to the model category of orbispaces, compare
Theorem~\ref{thm:L and orbispace}, and to the category of orthogonal spaces, 
with respect to the positive global model structure,
compare Theorem~\ref{thm:orthogonal vs L}.

We denote by~$\Lc=\bL(\mR^\infty,\mR^\infty)$ the monoid, under composition,
of linear isometric self-embeddings of~$\mR^\infty$,
i.e., those~$\mR$-linear self-maps that preserve the scalar product.
The space $\Lc$ carries the compactly-generated function space topology.
The space $\Lc$ is contractible by
\cite[Sec.\ 1, Lemma]{boardman-vogt-homotopy everything} or Remark \ref{rk:L contractible},
and composition
\[ \circ \ : \ \Lc\times\Lc \ \to \ \Lc\]
is continuous, see for example Proposition \ref{prop:L acts continuously}.
Appendix \ref{app A} reviews the definition of the function space topology
in more detail, and collects
other point-set topological properties of spaces of linear isometric embeddings.

\begin{defn}
An {\em $\Lc$-space} is an $\Lc$-object in the category $\bT$ of
compactly generated spaces.
\end{defn}

More explicitly, an $\Lc$-space is a compactly generated space $X$
in the sense of \cite{mccord},
equipped with an action of the monoid~$\Lc$, and such that the action map
\[ \Lc\times X \ \to \ X \ , \quad (\varphi,x)\ \longmapsto \ \varphi\cdot x \]
is continuous for the Kelleyfied product topology on the source.
We write $\Lc\bT$ for the category of $\Lc$-spaces and $\Lc$-equivariant
continuous maps.

\begin{eg}
Examples of interesting $\Lc$-spaces are 
the global classifying spaces $\Lc/G$ for a compact Lie subgroup $G$ of
$\Lc$, see Example \ref{eg:L/G}. Many more examples arise from
orthogonal spaces via `evaluation at $\mR^\infty$' as explained in
Construction \ref{con:L acts in general}. The point of the Quillen equivalence
between $\Lc$-spaces and orthogonal spaces 
(Theorem \ref{thm:orthogonal vs L} in combination with Proposition \ref{prop:Q vs O})
is precisely that up to global equivalence, {\em all} $\Lc$-spaces
arise from orthogonal spaces by evaluation at $\mR^\infty$.

In particular, the orthogonal spaces  discussed in Chapters~1 and~2
of \cite{schwede-global} provide a host of examples of $\Lc$-spaces, and 
our Quillen equivalence translates all homotopical statements
about their global homotopy types into corresponding properties
of the associated $\Lc$-spaces. Explicit examples include
global projective spaces 
and Grassmannians \cite[Ex.\,1.1.28, 2.3.12--2.3.16]{schwede-global},
cofree global homotopy types \cite[Con.\,1.2.25]{schwede-global},
free ultra-commutative monoids \cite[Ex.\,2.1.5]{schwede-global},
global equivariant refinements $\mathbf{O}$, $\mathbf{S O}$, $\mathbf{U}$,
$\mathbf{S U}$, $\mathbf{S p}$, $\mathbf{Spin}$, $\mathbf{Spin}^c$
of the infinite families of classical Lie group \cite[Ex.\,2.3.6--2.3.10]{schwede-global},
unordered frames \cite[Ex.\,2.3.24]{schwede-global},
different global refinements $\mathbf{b O}$, $\mathbf{B O}$
and $\mathbf{B}^\circ\mathbf{O}$ of the classifying space of the infinite
orthogonal group \cite[Sec.\,2.4]{schwede-global},
global versions $\mathbf{BOP}$, $\mathbf{BUP}$ and $\mathbf{BSpP}$ of 
the infinite loop spaces of the real, complex and symplectic
equivariant $K$-theory spectra
\cite[Ex.\,2.4.1, 2.4.33]{schwede-global},
and the underlying `global infinite loop space' $\Omega^\bullet X$
of a stable global homotopy type $X$ \cite[Con.\,4.1.6]{schwede-global}.
\end{eg}

\begin{defn} 
Let $G$ be a compact Lie group. 
A {\em complete $G$-universe} is an orthogonal $G$-represen\-tation $\Uc$ 
of countably infinite dimension such that 
every finite-dimensional $G$-representation admits a $G$-equivariant linear isometric
embedding into $\Uc$.
\end{defn}

Proposition \ref{prop:compact subgroups}~(ii)
shows that an orthogonal representation of a compact Lie group $G$ on
a countably infinite dimensional inner product space
is necessarily the orthogonal direct sum of finite-dimensional $G$-invariant
subspaces. By further decomposing the summands into irreducible pieces,
the orthogonal $G$-represen\-tation can be written 
as the orthogonal direct sum of finite-dimensional irreducible orthogonal
$G$-representations. 
If the given representation is a complete $G$-universe, then
every irreducible $G$-representation must occur infinitely often.
So we conclude that a complete $G$-universe is 
equivariantly isometrically isomorphic to
\[\bigoplus_{\lambda\in\Lambda} \bigoplus_\mN\lambda \ ,\]
where $\Lambda=\{\lambda\}$ is a complete set of 
pairwise non-isomorphic irreducible $G$-representations.
The set $\Lambda$ is finite if $G$ is finite, and countably infinite if $G$ has
positive dimension.

\smallskip

Now we come to a key definition.

\begin{defn}\label{def-universal subgroup}
A {\em compact Lie subgroup} of the topological monoid $\Lc$ is a
subgroup that is compact in the subspace topology and admits the structure of a Lie group
(necessarily unique).
A compact Lie subgroup is a {\em universal subgroup}
if the tautological $G$-action makes~$\mR^\infty$ into a complete $G$-universe.
\end{defn}

Since $\Lc$ is a Hausdorff space, every compact subgroup is necessarily closed.
As we show in Proposition \ref{prop:characterize Lie},
a compact subgroup $G$ of $\Lc$ is a Lie group if and only if there
exists a finite-dimensional $G$-invariant subspace of $\mR^\infty$
on which $G$ acts faithfully. There are compact subgroups of $\Lc$
that are not Lie groups, compare Example \ref{eg:not Lie}.
When $G$ is a compact Lie subgroup of $\Lc$ we write $\mR^\infty_G$
for the tautological $G$-representation on~$\mR^\infty$,
which is automatically faithful.

The next proposition shows that conjugacy classes of 
universal subgroups of $\Lc$
biject with isomorphism classes of compact Lie groups.  

\begin{prop}\label{prop-uniqueness of universal subgroups}
Every compact Lie group is isomorphic to a universal subgroup of $\Lc$.
Every isomorphism between universal subgroups is given by conjugation
by an invertible linear isometry in~$\Lc$.
In particular, isomorphic universal subgroups are conjugate in~$\Lc$.
\end{prop}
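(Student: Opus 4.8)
The plan is to prove the three assertions in sequence, and the key technical fact underlying all of them is that a complete $G$-universe is unique up to equivariant isometric isomorphism, which follows from the explicit classification just given in the text: any complete $G$-universe is equivariantly isometrically isomorphic to $\bigoplus_{\lambda\in\Lambda}\bigoplus_\mN\lambda$, where $\Lambda$ ranges over a complete set of pairwise non-isomorphic irreducible $G$-representations.

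First I would establish existence. Given an abstract compact Lie group $G$, I choose a complete $G$-universe structure on $\mR^\infty$; concretely, pick the countable orthogonal direct sum $\bigoplus_{\lambda\in\Lambda}\bigoplus_\mN\lambda$ and identify its underlying inner product space with $\mR^\infty$. This produces a continuous isometric linear $G$-action, i.e., a homomorphism $\rho\colon G\to\Lc$. Faithfulness (hence injectivity of $\rho$) follows because any nontrivial irreducible representation already acts faithfully after passing to a suitable summand, or more simply because a complete universe contains a faithful finite-dimensional subrepresentation. Since $G$ is compact and $\rho$ is a continuous injective homomorphism from a compact space, $\rho$ is a homeomorphism onto its image $\rho(G)$, which is therefore a compact subgroup of $\Lc$ abstractly isomorphic to $G$ and carrying the complete-universe structure by construction; thus $\rho(G)$ is a universal subgroup.

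Next I would handle the rigidity statement, which is the crux. Suppose $K$ and $G$ are universal subgroups and $\alpha\colon K\to G$ is an isomorphism of topological groups. Both the tautological $K$-action on $\mR^\infty$ and the $K$-action obtained by restricting the $G$-action along $\alpha$ make $\mR^\infty$ into a complete $K$-universe: completeness of the first is the hypothesis that $K$ is universal, while for the second one uses that $\alpha$ is an isomorphism, so pulling back the complete $G$-universe structure yields a complete $K$-universe. By the uniqueness classification recalled above, these two $K$-universes are equivariantly isometrically isomorphic, i.e., there is an invertible linear isometry $\psi\colon\mR^\infty\to\mR^\infty$ intertwining them: $\psi(k\cdot v)=\alpha(k)\cdot\psi(v)$ for all $k\in K$, $v\in\mR^\infty$. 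Unravelling, this says precisely that $\psi\,k\,\psi^{-1}=\alpha(k)$ inside $\Lc$ for every $k\in K$, so $\alpha$ is conjugation by the invertible isometry $\psi$.

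The last sentence is then immediate: applying the rigidity statement to \emph{any} isomorphism between isomorphic universal subgroups produces a conjugating invertible isometry, so isomorphic universal subgroups are conjugate in $\Lc$. The main obstacle I anticipate is the point-set verification that the intertwining map $\psi$ produced by the abstract uniqueness of complete universes is genuinely an element of $\Lc$ (a continuous linear isometric self-embedding that is moreover invertible), and that conjugation by it realizes $\alpha$ on the nose rather than merely up to inner automorphism; this amounts to carefully reinterpreting an equivariant isometry of universes as a conjugating element, but once the uniqueness of complete $G$-universes is in hand—which the text has effectively supplied—the remaining steps are formal.
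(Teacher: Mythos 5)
Your proposal is correct and follows essentially the same route as the paper: existence by equipping $\mR^\infty$ with a complete $G$-universe structure adjoint to a closed embedding $\rho:G\to\Lc$, and rigidity by comparing the two complete $G$-universes $\mR^\infty_G$ and $\alpha^*(\mR^\infty_{\bar G})$ and reading the equivariance of the resulting isometric isomorphism $\varphi$ as the conjugation identity $\varphi\circ\alpha(g)=g\circ\varphi$. The point-set worry you flag at the end is vacuous here, since every linear map out of $\mR^\infty$ with the weak topology is automatically continuous (as recalled in Appendix~\ref{app A}), so your intertwining isometry and its inverse are literally invertible elements of the monoid $\Lc$.
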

\begin{proof}
  Given a compact Lie group $G$, we choose a continuous linear isometric action of~$G$ 
on $\mR^\infty$ that makes it a complete $G$-universe.
Such an action is adjoint to a continuous injective monoid homomorphism $\rho:G\to\Lc$.
Since~$G$ is compact and $\Lc$ is a Hausdorff space, the map~$\rho$
is a closed embedding, and the image $\rho(G)$ 
is a universal subgroup of $\Lc$, isomorphic to~$G$ via~$\rho$.

Now we let $\alpha:G\to \bar G$ be an isomorphism 
between two universal subgroups of~$\Lc$.
Then $\mR^\infty_G$ and $\alpha^*(\mR^\infty_{\bar G})$
are two complete $G$-universes, so there is a $G$-equivariant
linear isometric isomorphism $\varphi:\alpha^*(\mR^\infty_{\bar G})\to \mR^\infty_G$.
This~$\varphi$ is an invertible element of the monoid~$\Lc$ 
and the $G$-equivariance means that $\varphi\circ \alpha(g) = g\circ \varphi$
for all $g\in G$. Hence~$\alpha$ coincides with conjugation by $\varphi$.  
\end{proof}

The topological monoid $\Lc$ contains many other compact Lie subgroups
that are not universal subgroups: 
any continuous, faithful linear
isometric action of a compact Lie group $G$ on $\mR^\infty$
provides such a compact Lie subgroup. However, with respect to
this action, $\mR^\infty$ need not be a complete $G$-universe, because some irreducible
$G$-representations may occur only with finite multiplicity.

\begin{defn}
A morphism $f:X\to Y$ of $\Lc$-spaces is
a {\em global equivalence}
if for every universal subgroup~$G$ of~$\Lc$ the induced map
$f^G:X^G\to Y^G$ on $G$-fixed points is a weak homotopy equivalence.
\end{defn}

The class of global equivalences of $\Lc$-spaces 
is closed under various constructions; we collect some
of these properties in the next proposition.
We call a morphism $f:A\to B$ of $\Lc$-spaces an
{\em h-cofibration}
if it has the homotopy extension property, 
i.e., given a morphism of $\Lc$-spaces $\varphi:B\to X$ and a homotopy
of $\Lc$-equivariant maps $H:A\times [0,1]\to X$ such that $H(-,0)=\varphi f$,
there is a homotopy $\bar H:B\times [0,1]\to X$ such that
$\bar H\circ(f\times [0,1])=H$ and $\bar H(-,0)=\varphi$.
The class of h-cofibrations is closed under retracts, cobase change, coproducts 
and sequential compositions because it can be characterized by a right lifting property.

\begin{prop}\label{prop:global equiv basics} 
  \begin{enumerate}[\em (i)]
  \item 
    A coproduct of global equivalences is a global equivalence.
  \item 
    A product of global equivalences is a global equivalence. 
  \item Let $e_n:X_n\to X_{n+1}$ and~$f_n:Y_n\to Y_{n+1}$ be morphisms 
    of $\Lc$-spaces that are closed embeddings of underlying spaces, for $n\geq 0$. 
    Let~$\psi_n:X_n\to Y_n$ be global equivalences of $\Lc$-spaces
    that satisfy~$\psi_{n+1}\circ e_n=f_n\circ\psi_n$ for all $n\geq 0$.
    Then the induced morphism $\psi_\infty:X_\infty\to Y_\infty$ 
    between the colimits of the sequences is a global equivalence.
  \item Let $f_n:Y_n\to Y_{n+1}$ be a global equivalence 
    of $\Lc$-spaces
    that is a closed embedding of underlying spaces, for $n\geq 0$. 
    Then the canonical morphism 
    $f_\infty:Y_0\to Y_\infty$ to the colimit of the sequence $\{f_n\}_{n\geq 0}$
    is a global equivalence.
 \item  Let 
   \[ \xymatrix{
     C  \ar[d]_\gamma & A \ar[l]_-g \ar[d]^\alpha \ar[r]^-f & B \ar[d]^\beta\\
     \bar C  & \bar A \ar[l]^-{\bar g}\ar[r]_-{\bar f} & \bar B } \]
   be a commutative diagram of $\Lc$-spaces such that $g$ and~$\bar g$ are
   h-cofibrations.
   If the morphisms $\alpha,\beta$ and~$\gamma$ are global equivalences,
   then so is the induced morphism of pushouts
    \[ \gamma\cup \beta\ : \ C\cup_A B \ \to \ \bar C\cup_{\bar A} \bar B\ . \]
  \item  Let 
    \[ \xymatrix{ A \ar[r]^-f \ar[d]_g & B \ar[d]^h\\
      C \ar[r]_-k & D } \]
    be a pushout square of $\Lc$-spaces such that $f$ is a global equivalence.
    If in addition $f$ or $g$ is an h-cofibration, 
    then the morphism $k$ is a global equivalence.
 \end{enumerate}
\end{prop}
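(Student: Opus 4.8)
The plan is to reduce each of the six assertions to a statement about the $G$-fixed-point spaces for the individual universal subgroups $G$ of $\Lc$, and then to quote the corresponding purely space-level fact. Everything hinges on the good behaviour of the fixed-point functor $(-)^G\colon \Lc\bT\to\bT$, obtained by restricting the action along $G\hookrightarrow\Lc$ and then passing to $G$-fixed points. The point-set lemmas I would establish first are: $(-)^G$ preserves all limits (restriction of the action preserves limits, since these are created on underlying spaces, and $G$-fixed points is a right adjoint to the trivial-action functor), in particular arbitrary products; it commutes with arbitrary coproducts, because the action preserves the summand in which a point lies; it sends closed embeddings to closed embeddings; and—this is the crux—it commutes with sequential colimits along closed embeddings and with pushouts one of whose legs is a closed embedding. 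For the last two the argument is the same and uses only that every structure map is $\Lc$-equivariant, hence $G$-equivariant: each stage is then a $G$-invariant closed subspace and the maps into the colimit are injective, so a $G$-fixed point of the colimit, being represented at some finite stage, is already $G$-fixed there; closedness of the embeddings in the weak Hausdorff compactly generated setting matches the subspace topology on the fixed points of the colimit with the colimit topology on the fixed points of the stages.

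Granting these, parts (i)--(iv) follow at once. Part (ii) combines the preservation of products with the observation that an arbitrary product of weak homotopy equivalences of spaces is again one, since homotopy groups commute with products; part (i) is the analogous statement for coproducts, where homotopy groups are detected componentwise. For (iii) the commutation of $(-)^G$ with sequential colimits along closed embeddings turns $\psi_\infty^G$ into the colimit of the weak equivalences $\psi_n^G$ along closed embeddings, and such a colimit is again a weak homotopy equivalence because spheres and disks are compact, so homotopy groups commute with it. Part (iv) is the special case of (iii) obtained by comparing the given sequence $\{f_n\}$ with the constant sequence on $Y_0$, using that composites of global equivalences are global equivalences.

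For (v) and (vi) I would additionally check that an h-cofibration $g\colon A\to C$ of $\Lc$-spaces induces a Hurewicz cofibration $g^G\colon A^G\to C^G$ of spaces; this follows from the retract-of-the-mapping-cylinder characterisation, together with the already-noted facts that $(-)^G$ commutes with products with $[0,1]$ and with the pushout defining the mapping cylinder. In (v) the hypothesis that $g$ and $\bar g$ are h-cofibrations ensures both left legs are closed embeddings, so $(-)^G$ commutes with the two pushouts; taking $G$-fixed points then yields a map of pushout squares of spaces whose left legs are Hurewicz cofibrations and whose outer vertical maps are weak homotopy equivalences, and the classical gluing lemma (a pushout along a Hurewicz cofibration is a homotopy pushout, and homotopy pushouts preserve weak equivalences) gives that $(\gamma\cup\beta)^G$ is a weak equivalence. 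Part (vi) is the corresponding cobase-change statement: the assumption that $f$ or $g$ is an h-cofibration makes one leg a closed embedding, so $(-)^G$ commutes with the pushout, and $k^G$ is the cobase change of the weak equivalence $f^G$ along $g^G$ with either $f^G$ or $g^G$ a Hurewicz cofibration, whence the same homotopy-pushout argument (left properness of spaces) applies.

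The main obstacle is the package of point-set commutation properties of $(-)^G$ collected in the first paragraph—especially matching the topologies on fixed points of sequential colimits and of pushouts along closed embeddings—since these have to be verified inside the compactly generated, weak Hausdorff category with its Kelleyfied products and the function-space topology on $\Lc$. Once these are secured, together with the statement that h-cofibrations induce Hurewicz cofibrations on fixed points, the remaining homotopical input is entirely standard.
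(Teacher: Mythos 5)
Your proposal is correct and follows essentially the same route as the paper: reduce everything to $G$-fixed points for universal subgroups $G$, using that $(-)^G$ commutes with coproducts, products, sequential colimits along closed embeddings, and pushouts along closed embeddings, that h-cofibrations of $\Lc$-spaces induce h-cofibrations (hence closed embeddings) on fixed points, and then quote the standard space-level facts, in particular the gluing lemma for parts (v) and (vi). The only (harmless) deviations are that you deduce (iv) from (iii) by comparing with the constant sequence on $Y_0$ instead of citing a separate space-level lemma for sequential composites, and you prove (vi) directly on fixed points via the gluing lemma rather than via the paper's formal reduction of (vi) to (v) by the diagram with two identity columns; both variants are valid, and your explicit verification of the point-set commutation facts (which the paper cites from its appendix and from Lewis's thesis) correctly supplies the needed details.
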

\begin{proof}
  Part~(i) holds because fixed points commute with disjoint unions
  and a disjoint union of weak equivalences is a weak equivalence.
  Part~(ii) holds because fixed points commute with products
  and a product of weak equivalences is a weak equivalence.
  
  Fixed points commute with sequential colimits along closed embeddings,
  compare \cite[Prop.\,B.1 (ii)]{schwede-global}.
  Moreover, a colimit of weak equivalences along two sequences of closed
  embeddings is another weak equivalence
  (compare \cite[Prop.\,A.17 (i)]{schwede-global}), so together this implies part~(iii). 
  Similarly, a sequential composite of weak equivalences that are simultaneously
  closed embeddings is another weak equivalence
  (compare \cite[Prop.\,A.17 (ii)]{schwede-global}), so together this implies part~(iv). 
  
  (v) Let $G$ be a universal subgroup of~$\Lc$. 
  Then the three vertical maps in the following commutative diagram 
  are weak equivalences:
  \[ \xymatrix@C=10mm{
    C^G  \ar[d]_{\gamma^G} &   A^G\ar[l]_-{g^G} \ar[d]^{\alpha^G} \ar[r]^-{f^G} & 
    B^G \ar[d]^{\beta^G}\\
    \bar C^G  &   \bar A^G \ar[l]^-{\bar g^G}\ar[r]_-{\bar f^G} &   \bar B^G } \]
  Since $g$ and $\bar g$ are h-cofibrations of $\Lc$-spaces, the
  maps $g^G$ and $\bar g^G$ are h-cofibrations of non-equivariant spaces.
  The induced map of the horizontal pushouts is thus a weak equivalence
  by the gluing lemma, 
  see for example~\cite[Appendix, Prop.\,4.8 (b)]{boardman-vogt-homotopy invariant}.
  Since $g^G$ and $\bar g^G$ are h-cofibrations, they are in particular 
  closed embeddings,
  compare \cite[Prop.\,8.2]{lewis-thesis} or \cite[Prop.\,A.31]{schwede-global}.
  So taking $G$-fixed points commutes with the horizontal pushout
  (compare \cite[Prop.\,B.1 (i)]{schwede-global}), and we conclude that
  also the map
  \[ (\gamma\cup \beta)^G\ : \ (C\cup_A B)^G \ \to \ (\bar C\cup_{\bar A} \bar B)^G \]
  is a weak equivalence. This proves part~(v).
  
  (vi) In the commutative diagram
  \[ \xymatrix{
    C  \ar@{=}[d] & A \ar[l]_-g \ar@{=}[d] \ar@{=}[r] & A \ar[d]^f\\
    C  & A \ar[l]^-{g}\ar[r]_-{f} & B } \]
  all vertical morphisms are global equivalences.
  If $g$ is an h-cofibration, we can apply part~(v) to this square
  to get the desired conclusion.
  If $f$ is an h-cofibration, we apply part~(v) after interchanging the roles
  of left and right horizontal morphisms.
\end{proof}

Several interesting morphisms of $\Lc$-spaces that come up in this paper 
satisfy the following stronger form of `global equivalence':

\begin{defn}
A morphism $f:X\to Y$ of $\Lc$-spaces is a {\em strong global equivalence}
if for every universal subgroup~$G$ of~$\Lc$, 
the underlying $G$-map of~$f$ is a $G$-equivariant homotopy equivalence.
\end{defn}

In other words, a morphism of $\Lc$-spaces $f:X\to Y$ is a strong global equivalence
if for every universal subgroup $G$ there is a $G$-equivariant
continuous map~$g:Y\to X$ such that $g\circ f:X\to X$
and $f\circ g:Y\to Y$ are $G$-equivariantly homotopic to the
respective identity maps. However, there is no compatibility 
requirement on the  homotopy inverses and the equivariant homotopies,
and they need not be equivariant for the full monoid $\Lc$.
Clearly, every strong global equivalence is in particular a global equivalence.

\begin{eg}[Induced $\Lc$-spaces and global classifying spaces]\label{eg:L/G}
We let $G$ be a compact Lie subgroup of $\Lc$ and $A$ a left $G$-space.
Then we can form the {\em induced $\Lc$-space}
\[ \Lc\times_G A \ = \ (\Lc\times A)/ (\varphi g,a)\sim (\varphi, g a)\ . \]
The functor $\Lc\times_G-$ is left adjoint to the restriction functor
from $\Lc$-spaces to $G$-spaces. 

The $G$-representation $\mR^\infty_G$ 
has a finite-dimensional faithful $G$-subrepresentation $V$,
by Proposition \ref{prop:characterize Lie}.
The following Proposition \ref{prop:L/G to L_G,V} shows in particular that 
for every universal subgroup $K$ of $\Lc$ the restriction map
\[  \Lc/G\ = \  \bL(\mR^\infty_G,\mR^\infty_K)/G \ \ \to \
\bL(V,\mR^\infty_K)/G \]
is a $K$-homotopy equivalence.
The $K$-space $\bL(V,\Uc_K)/G$ is a classifying space 
for principal $G$-bundles over $K$-spaces, 
see for example~\cite[Prop.\,1.1.30]{schwede-global}.
So under the tautological $K$-action, $\Lc/G$
is a classifying space for principal $G$-bundles over $K$-spaces.
We conclude that $\Lc/G$ is an incarnation, in the world of $\Lc$-spaces, 
of the global classifying space of the group $G$, or of the stack 
of principal $G$-bundles. 
In particular, the underlying non-equivariant space of $\Lc/G$ 
is a classifying space for $G$.
\end{eg}

\begin{prop}\label{prop:L/G to L_G,V} 
Let $G$ be a compact Lie subgroup of the monoid $\Lc$,
$V$ a faithful finite-dimensional 
$G$-subrepresen\-ta\-tion of $\mR^\infty_G$ and $A$ a $G$-space.
Then the restriction morphism
\[ \rho_V\times_G A \ : \ \Lc\times_G A \ = \ \bL(\mR_G^\infty,\mR^\infty)\times_G A \ \to \ 
\bL(V,\mR^\infty)\times_G A \]
is a strong global equivalence of $\Lc$-spaces.
\end{prop}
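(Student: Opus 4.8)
The plan is to fix a universal subgroup $K$ of $\Lc$ and prove that the underlying $K$-map of $\rho_V\times_G A$ is a genuine $K$-equivariant homotopy equivalence. The first observation is that the $G$-action (by precomposition on the source $\mR^\infty_G$) and the $K$-action (by postcomposition on the target $\mR^\infty$) on the embedding space $\bL(\mR^\infty_G,\mR^\infty)$ commute, so they assemble into an action of $G\times K$; the same holds for $\bL(V,\mR^\infty)$, and $\rho_V$ is $(G\times K)$-equivariant. Moreover $\rho_V$ is simply precomposition with the $G$-equivariant isometric inclusion $V\hookrightarrow\mR^\infty_G$. I would then reduce the whole statement to the claim that $\rho_V$ is a genuine $(G\times K)$-equivariant homotopy equivalence: if $\rho_V$ admits a $(G\times K)$-equivariant homotopy inverse together with $(G\times K)$-equivariant homotopies, then applying $(-)\times_G A$ yields the result, because $[0,1]$ carries the trivial action and hence $(X\times[0,1])\times_G A\iso (X\times_G A)\times[0,1]$, so the homotopies descend to $K$-equivariant homotopies after passing to $G$-orbits. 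This reduction is routine; the content is the core statement about $\rho_V$.

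To prove that $\rho_V$ is a $(G\times K)$-homotopy equivalence I would invoke the equivariant Whitehead theorem, so it suffices to check that $\rho_V$ induces a weak homotopy equivalence on $\Gamma$-fixed points for every closed subgroup $\Gamma\le G\times K$. Here the faithfulness of $V$ is crucial: since $V$ is a faithful $G$-representation, $G$ acts freely on $\bL(V,\mR^\infty)$, and a fortiori freely on $\bL(\mR^\infty_G,\mR^\infty)$. Consequently a subgroup $\Gamma$ can have nonempty fixed points on either space only if $\Gamma\cap(G\times 1)$ is trivial, i.e.\ only if $\Gamma$ is the graph $\Gamma_{L,\alpha}=\{(\alpha(k),k):k\in L\}$ of a continuous homomorphism $\alpha:L\to G$ defined on a closed subgroup $L\le K$. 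For all other $\Gamma$ both fixed-point spaces are empty and there is nothing to check. For a graph subgroup $\Gamma_{L,\alpha}$, unwinding the definitions identifies the fixed points with spaces of $L$-equivariant linear isometric embeddings: writing $V_\alpha$ and $(\mR^\infty_G)_\alpha$ for $V$ and $\mR^\infty_G$ regarded as $L$-representations through $\alpha$, one gets $\bL(V,\mR^\infty)^{\Gamma_{L,\alpha}}=\bL^L(V_\alpha,\mR^\infty)$ and likewise for the infinite source, with $\mR^\infty$ carrying the restricted $L$-action, and $\rho_V^{\Gamma_{L,\alpha}}$ becoming the corresponding restriction map.

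The key geometric input is then contractibility. Because $K$ is a universal subgroup, $\mR^\infty$ is a complete $K$-universe, and the restriction of a complete $K$-universe to any closed subgroup $L$ is again a complete $L$-universe. Using an $L$-equivariant isometry $\mR^\infty\iso\mR^\infty\oplus\mR^\infty$ one builds the standard `rotate into the second summand' homotopy, which shows that both $\bL^L(V_\alpha,\mR^\infty)$ and $\bL^L((\mR^\infty_G)_\alpha,\mR^\infty)$ are contractible (the argument is insensitive to whether the source is finite- or infinite-dimensional, since a complete universe absorbs any countable-dimensional $L$-representation). A map between weakly contractible spaces is automatically a weak equivalence, so $\rho_V^{\Gamma_{L,\alpha}}$ is a weak equivalence for every graph subgroup, and the fixed-point criterion is verified.

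I expect the main obstacle to be point-set-topological rather than homotopical: the equivariant Whitehead theorem requires both embedding spaces to have the homotopy type of $(G\times K)$-CW complexes. For the finite-dimensional source this is clear, since choosing a $K$-invariant exhaustion $\mR^\infty=\bigcup_n W_n$ by finite-dimensional subrepresentations exhibits $\bL(V,\mR^\infty)$ as a sequential colimit of Stiefel manifolds, i.e.\ of compact smooth $(G\times K)$-manifolds, along closed $(G\times K)$-embeddings. For the infinite-dimensional source $\bL(\mR^\infty_G,\mR^\infty)$ is instead an inverse limit of such spaces, and establishing that it still has $(G\times K)$-CW homotopy type is the delicate step; I would handle it using the point-set properties of spaces of linear isometric embeddings collected in Appendix~\ref{app A}. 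It is worth noting that a more naive strategy---writing down an explicit $(G\times K)$-equivariant section or homotopy inverse of $\rho_V$---cannot work, since extending an embedding of $V$ over its complement in $\mR^\infty_G$ forces a choice of embedding of that complement into the target, and no such choice can be made $G$-equivariantly because the target carries no $G$-action; this is precisely why the Whitehead route is the natural one.
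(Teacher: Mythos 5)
Your overall strategy and your fixed-point analysis track the paper closely: the paper's proof of this proposition is a one-liner that reduces, exactly as you do, to showing $\rho_V:\bL(\mR^\infty_G,\mR^\infty_K)\to\bL(V,\mR^\infty_K)$ is a $(K\times G)$-homotopy equivalence, and the cited Proposition~\ref{prop:EKG infinite} is proved by precisely your computation — faithfulness of $V$ forces fixed points to be empty except for graph subgroups $\Gamma_{L,\alpha}$, on which both sides become spaces $\bL^L(-,\mR^\infty_K)$ of $L$-equivariant embeddings into a complete $L$-universe, contractible by \cite[II Lemma 1.5]{lms}. But the step you yourself flag as delicate is a genuine gap, and it is exactly where the paper takes a different route. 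The equivariant Whitehead theorem requires $\bL(\mR^\infty_G,\mR^\infty)$ to have the homotopy type of a $(K\times G)$-CW complex, and nothing in Appendix~\ref{app A} supplies this: Proposition~\ref{prop:properties of L}~(iii) only identifies this space as an \emph{inverse} limit of the tower of spaces $\bL(V_n,\mR^\infty)$, and inverse limits do not in general preserve CW homotopy type (unlike colimits along cofibrations, which handle your finite-dimensional side). A posteriori the limit does have equivariant CW homotopy type, but only because it is homotopy equivalent to the cofibrant space $\bL(V,\mR^\infty)$ — i.e., because of the very proposition you are trying to prove, so invoking it would be circular.

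The missing idea is the paper's Proposition~\ref{prop:inverse limit homotopy}, which replaces the CW-hypothesis entirely: each one-stage restriction $p_n:\bL(V_n,\mR^\infty)\to\bL(V_{n-1},\mR^\infty)$ is a $(K\times G)$-\emph{acyclic fibration} (your fixed-point computation gives the weak equivalence; the map is moreover a locally trivial bundle, hence a Serre fibration on all fixed points) between \emph{cofibrant} objects (\cite[Prop.\,1.1.19 (ii)]{schwede-global}), and a general model-categorical lemma then shows that the inverse limit of such a tower maps to the bottom stage by a genuine homotopy equivalence, via sections $s_n$ of each $p_n$ obtained by lifting against cofibrancy, together with compatible telescoping homotopies $K_n$ assembled over the tower. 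This also corrects your closing remark that an equivariant section ``cannot work'': the obstruction you describe (no $G$-equivariant choice of embedding for the orthogonal complement of $V$) only rules out a one-shot explicit formula for a section of $\rho_V$; the paper constructs equivariant sections abstractly, one finite stage at a time, where the acyclic-fibration-versus-cofibrant-object lifting property makes the choice for you, and then passes to the limit. So to repair your argument you should abandon the Whitehead theorem and prove (or import) the inverse-limit lemma instead.
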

\begin{proof}
We let $K$ be a universal subgroup of the monoid $\Lc$.
Then $\rho_V: \bL(\mR_G^\infty,\mR_K^\infty)\to \bL(V,\mR^\infty_K)$ is a 
$(K\times G)$-homotopy equivalence by
Proposition~\ref{prop:EKG infinite}; so the map
\[ \rho_V \times_G A \ : \ \bL(\mR_G^\infty,\mR_K^\infty)\times_G A \ \to \ 
\bL(V,\mR^\infty_K)\times_G A \]
is a $K$-homotopy equivalence.
\end{proof}

Now we turn to the construction of model structures on the category of $\Lc$-spaces,
with the global model structure of Theorem \ref{thm:global L-spaces} 
as the ultimate aim.
The `classical' model structure on the category of all topological spaces
was established by Quillen in~\cite[II.3 Thm.\,1]{Q}.
We use the straightforward adaptation of this model structure to
the category $\bT$ of compactly generated spaces,
which is described for example in~\cite[Thm.\,2.4.25]{hovey-book}.
In this model structure, the weak equivalences 
are the weak homotopy equivalences, and fibrations
are the Serre fibrations. The cofibrations are the retracts of 
generalized CW-complexes, i.e., cell complexes in which cells
can be attached in any order and not necessarily to cells of lower dimensions.

Now we let $\Cc$ be a collection of closed subgroups of the 
linear isometries monoid $\Lc$.
We call a morphism $f:X\to Y$ of $\Lc$-spaces a {\em $\Cc$-equivalence}
(respectively {\em $\Cc$-fibration})
if the restriction $f^G:X^G\to Y^G$ to $G$-fixed points is a
weak equivalence (respectively Serre fibration)
of spaces for all subgroups $G$ that belong to the collection~$\Cc$.
A {\em $\Cc$-cofibration} is a morphism with the left lifting
property with respect to all morphisms that are simultaneously
$\Cc$-equivalences and $\Cc$-fibrations.
The resulting `$\Cc$-projective model structure'
would be well known if $\Lc$ were a topological group.
Despite the fact that $\Lc$ is only a topological monoid, 
the standard proof for topological groups
(see for example \cite[Prop.\,B.7]{schwede-global})
goes through essentially unchanged, and hence we omit it.

\begin{prop}\label{prop:projective model structure}
  Let $\Cc$ be a collection of closed subgroups of $\Lc$. 
  Then the $\Cc$-equivalences, $\Cc$-cofibrations and
  $\Cc$-fibrations form a model structure,
  the {\em $\Cc$-projective model structure}
  on the category of $\Lc$-spaces.
  This model structure is proper, cofibrantly generated and topological.
\end{prop}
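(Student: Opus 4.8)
The plan is to realize the $\Cc$-projective structure as a cofibrantly generated model structure obtained by transfer along the fixed-point functors, and to verify the hypotheses of the recognition theorem for cofibrantly generated model structures \cite[Thm.\,2.1.19]{hovey-book} (this is exactly the route of \cite[Prop.\,B.7]{schwede-global}). The key input is that for every closed subgroup $G$ of $\Lc$ the orbit $\Lc/G$ corepresents $G$-fixed points: evaluating an equivariant map at the identity coset gives an $\Lc$-natural isomorphism $\map^\Lc(\Lc/G,X)\iso X^G$, so the functor $\Lc/G\times-$ (with $\Lc$ acting only on the first factor and trivially on the second) is left adjoint to $(-)^G$. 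This is the first place where one must check that the subgroup structure of $G$, rather than any group structure on $\Lc$, is all that is needed. I would then take as generating sets
\[ I_\Cc \ = \ \{\,\Lc/G\times(S^{n-1}\hookrightarrow D^n)\,\}_{G\in\Cc,\,n\geq 0}\]
and
\[ J_\Cc \ = \ \{\,\Lc/G\times(D^n\times\{0\}\hookrightarrow D^n\times[0,1])\,\}_{G\in\Cc,\,n\geq 0}\ .\]

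By the adjunction, a morphism $f:X\to Y$ lies in $J_\Cc$-inj (respectively $I_\Cc$-inj) if and only if $f^G$ is a Serre fibration (respectively an acyclic Serre fibration) for every $G\in\Cc$; that is, $J_\Cc$-inj is the class of $\Cc$-fibrations and $I_\Cc$-inj the class of $\Cc$-acyclic fibrations. Since $(-)^G$ is a functor and weak equivalences of spaces satisfy two-out-of-three and are closed under retracts, the class $W$ of $\Cc$-equivalences inherits both properties. The easy recognition hypotheses are then quick: smallness of the domains holds because in compactly generated spaces every object is small relative to closed embeddings and relative $I_\Cc$- and $J_\Cc$-cell complexes are built from closed embeddings; the inclusion $I_\Cc$-inj $\subseteq W\cap J_\Cc$-inj is immediate since an acyclic Serre fibration is a Serre fibration; and $W\cap J_\Cc$-inj $\subseteq I_\Cc$-inj follows on fixed points because a map of spaces that is both a weak equivalence and a Serre fibration is an acyclic fibration, whence $f$ has the right lifting property against $I_\Cc$ by adjunction.

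The main work, and the step I expect to be the real obstacle, is to show that every relative $J_\Cc$-cell complex lies in $W$ (and in $I_\Cc$-cof). Here one uses that for $K\in\Cc$ and a trivial $\Lc$-space $A$ one has $(\Lc/G\times A)^K=(\Lc/G)^K\times A$, so on $K$-fixed points each generating acyclic cofibration becomes the deformation-retract inclusion $(\Lc/G)^K\times(D^n\times\{0\}\hookrightarrow D^n\times[0,1])$, an acyclic closed embedding of spaces. Because $K$-fixed points commute with pushouts along, and sequential colimits of, closed embeddings \cite[Prop.\,B.1]{schwede-global}, and because acyclic closed embeddings are preserved under such pushouts and colimits by the gluing lemma and the colimit results already invoked in Proposition \ref{prop:global equiv basics} (iii)--(vi), the $K$-fixed points of a relative $J_\Cc$-cell complex form a composite of acyclic closed embeddings, hence a weak equivalence. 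This yields $J_\Cc$-cell $\subseteq W$; the inclusion $J_\Cc$-cell $\subseteq I_\Cc$-cof follows formally from $I_\Cc$-inj $\subseteq J_\Cc$-inj. This is the heart of the argument precisely because it is where one must know that $\Cc$-fixed points are sufficiently homotopical on $\Lc$-spaces in spite of $\Lc$ being only a monoid; everything rests on the point-set facts of Appendix \ref{app A} and of \cite[App.\,A,\,B]{schwede-global}, which hold for the $\Lc$-action exactly as for a group action.

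Finally I would record the extra properties. Cofibrant generation is built into the construction. For the \emph{topological} structure one checks the pushout-product axiom for the tensoring of $\Lc\bT$ over $\bT$ on the generating sets, which again reduces via the adjunction to the known pushout-product property of the standard generators in $\bT$. Left properness follows because $\Cc$-cofibrations are in particular h-cofibrations, being retracts of $I_\Cc$-cell complexes, so a gluing-lemma argument on $K$-fixed points as in Proposition \ref{prop:global equiv basics} (vi) applies; right properness follows because $(-)^K$ preserves pullbacks and the Quillen model structure on $\bT$ is right proper.
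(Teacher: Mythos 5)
Your proposal is correct and coincides with the paper's intended argument: the paper deliberately omits the proof, remarking that the standard proof for topological groups (\cite[Prop.\,B.7]{schwede-global}) goes through essentially unchanged even though $\Lc$ is only a monoid, and your transfer-along-fixed-points verification of the recognition theorem, with generating sets exactly the $I_\Cc$ and $J_\Cc$ of \eqref{eq:define_I} and \eqref{eq:define_J}, is precisely that standard proof. In particular your central observation --- that the adjunction $(\Lc/G\times -)\dashv (-)^G$ needs only that $G$ is a subgroup of the monoid $\Lc$ --- is the same point the paper relies on whenever it asserts that $\Lc/K$ represents $K$-fixed points.
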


For easier reference we recall the standard sets of generating cofibrations
and acyclic cofibrations of the $\Cc$-projective model structure.
We let $I_\Cc$ be the set of morphisms of $\Lc$-spaces
\begin{equation}\label{eq:define_I}
 \Lc/G\times \text{incl}\ : \ 
 \Lc/G\times \partial D^k\ \to \  \Lc/G\times D^k    
\end{equation}
for all $G$ in $\Cc$ all $k\geq 0$.
We let $J_\Cc$ denote the set of morphisms
\begin{equation}\label{eq:define_J}
 \Lc/G\times \text{incl}\ : \ 
 \Lc/G\times D^k\times \{0\}\ \to \ 
 \Lc/G\times D^k\times [0,1]  
\end{equation}
for all $G$ in $\Cc$ and all $k\geq 0$.
Then the right lifting property with respect to the set $I_\Cc$ 
(respectively $J_\Cc)$ is equivalent to being a $\Cc$-acyclic fibration
(respectively $\Cc$-fibration).

If we want a model structure on the category of $\Lc$-spaces with
the global equivalences as weak equivalences, then one possibility
is the {\em universal projective} model structure,
i.e., the $\Cc^{u}$-projective model structure of the previous Proposition 
\ref{prop:projective model structure},
for the collection $\Cc^{u}$ of universal
subgroups. We explain in Remark \ref{rk:global vs u-projective} 
below why this model structure is not the most convenient one for our purposes. 
We instead favor the following {\em global model structure},
which arises from the projective model structure
for the collection $\Cc^L$ of all compact Lie subgroups 
(as opposed to only universal subgroups)
by left Bousfield localization at the class of global equivalences.

\medskip

Let~$G$ and $\bar G$ be two compact Lie subgroups of $\Lc$
and  $\varphi\in\Lc$  a linear isometric embedding. 
We denote by
\[\text{stab}(\varphi) \ = \ \{ (g,\gamma)\in G\times\bar G \ : \ g\circ \varphi=\varphi\circ\gamma\}\]
the stabilizer group of $\varphi$ with respect to the action of
$G\times\bar G$ on $\Lc$ by post- respectively precomposition.
Since $\Lc$ is a Hausdorff space and composition of $\Lc$ is continuous,
the stabilizer $\text{stab}(\varphi)$ is a closed subgroup of $G\times\bar G$,
hence a compact Lie group in its own right.
The two projections from $\text{stab}(\varphi)$ to $G$ and $\bar G$
are continuous homomorphisms.

\begin{defn}
  Let~$G$ and $\bar G$ be two compact Lie subgroups of $\Lc$.
  A {\em correspondence} from $G$ to $\bar G$ is a linear isometric embedding 
  $\varphi\in\Lc$ such that the two projections 
  \[ G \ \xla{\quad}\ \text{stab}(\varphi) \ \xra{\quad} \ \bar G\]
  are isomorphisms.
\end{defn}

So a linear isometric embedding $\varphi\in\Lc$ 
is a correspondence from $G$ to $\bar G$ if and only if 
for every $g\in G$ there is a unique $\gamma\in\bar G$ such that
$g\circ \varphi=\varphi\circ\gamma$, and conversely
for every $\gamma\in \bar G$ there is a unique $g\in G$ such that
$g\circ \varphi=\varphi\circ\gamma$.
Since a linear isometric embedding is injective, the condition
$\varphi\circ\gamma = g\circ\varphi$ shows that $\gamma$
is determined by $\varphi$ and $g$. 

We write $\varphi:G\leadsto\bar G$ for a correspondence between compact Lie subgroups
of $\Lc$. Such a correspondence effectively embeds 
the tautological $\bar G$-representation
into the tautological $G$-representation. More precisely,
if $\alpha:G\to\bar G$ denotes the isomorphism provided by 
$\varphi$, i.e., $\varphi\circ\alpha(g)= g\circ\varphi$ 
for all $g\in G$, then $\varphi:\alpha^*(\mR^\infty_{\bar G})\to \mR^\infty_G$
is a $G$-equivariant linear isometric embedding.
If $\varphi$ happens to be bijective, then $\alpha(g)=\varphi^{-1}g\varphi$;
so informally speaking, one can think of a correspondence as an element
of $\Lc$ that `conjugates $G$ isomorphically onto $\bar G$'. The caveat is that
`conjugation by $\varphi$' does not have a literal meaning unless $\varphi$ is bijective.

Let $\varphi:G\leadsto\bar G$ be a correspondence between two compact Lie subgroups 
of $\Lc$. Then for every $g\in G$ there is a $\gamma\in\bar G$ such that
$g\circ\varphi = \varphi\circ \gamma$.
So for every $\Lc$-space $X$ and every $\bar G$-fixed element $x\in X^{\bar G}$,
we have
\[ g\cdot (\varphi\cdot x)\ = \ (g\circ\varphi)\cdot x
\ = \ (\varphi\circ \gamma)\cdot x\ = \ \varphi\cdot x\ .\]
Hence the continuous map $\varphi\cdot-: X\to X$ restricts to a map
\[ \varphi\cdot - \ : \ X^{\bar G}\ \to \ X^G  \]
from the $\bar G$-fixed points to the $G$-fixed points.
We denote by $\Cc^L$ the collection of compact Lie subgroups of $\Lc$.

\begin{defn}
  A morphism $f:X\to Y$ of $\Lc$-spaces is a {\em global fibration}
  if it is a $\Cc^L$-fibration
  and for every correspondence $\varphi:G\leadsto\bar G$ 
  between compact Lie subgroups of $\Lc$ the map
  \[ (f^{\bar G}, \varphi\cdot-)\ : \ X^{\bar G} \ \to \ 
  Y^{\bar G} \times_{Y^G }  X^G \]
  is a weak equivalence.
  An $\Lc$-space $X$ is {\em injective} if 
  for every correspondence $\varphi:G\leadsto\bar G$ 
  between compact Lie subgroups of $\Lc$ the map
  \[ \varphi\cdot - \ : \ X^{\bar G}\ \to \ X^G  \]
  is a weak equivalence.
\end{defn}

Equivalently, a morphism $f$ is a global fibration if and only if~$f$
is a $\Cc^L$-fibration and for every correspondence $\varphi:G\leadsto\bar G$ 
the square of fixed point spaces
\begin{equation}  \begin{aligned}
    \label{eq:fibration characterization}
    \xymatrix@C=15mm{ X^{\bar G} \ar[d]_{f^{\bar G}} \ar[r]^-{\varphi\cdot -} & 
      X^G \ar[d]^{f^G} \\
      Y^{\bar G} \ar[r]_-{\varphi\cdot -} & Y^G }
  \end{aligned}\end{equation}
is homotopy cartesian.  
Moreover, an $\Lc$-space is injective if and only if the unique morphism
to a terminal $\Lc$-space is a global fibration.

\begin{prop}\label{prop:level vs global fibration} 
  \begin{enumerate}[\em (i)]
  \item Every compact Lie subgroup of $\Lc$ admits a correspondence from a
    universal subgroup of $\Lc$.
  \item 
    Every global equivalence that is also a global fibration is a $\Cc^L$-equivalence.
  \item  
    Every global equivalence between injective $\Lc$-spaces is a $\Cc^L$-equivalence.
\end{enumerate}
\end{prop}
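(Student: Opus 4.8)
The plan is to reduce parts~(ii) and~(iii) to part~(i), which is the geometric heart of the statement; once a correspondence from a universal subgroup is available, the two fixed-point comparisons follow formally from the homotopy-pullback characterization of global fibrations and from two-out-of-three. So I would spend most of the effort on part~(i). Let $G$ be a compact Lie subgroup of $\Lc$. By Proposition~\ref{prop-uniqueness of universal subgroups} the abstract group $G$ is isomorphic to some universal subgroup $U$ of $\Lc$; I fix such an isomorphism $\alpha:U\to G$ and form the pulled-back representation $\alpha^*(\mR^\infty_G)$, a faithful $U$-representation on a countably-infinite-dimensional inner product space. Since $U$ is universal, $\mR^\infty_U$ is a complete $U$-universe; using the decomposition into irreducibles afforded by Proposition~\ref{prop:compact subgroups}~(ii) (in a complete universe every irreducible occurs with countably infinite multiplicity, so it dominates the finite-or-countable multiplicities appearing in any other representation), I obtain a $U$-equivariant linear isometric embedding $\varphi:\alpha^*(\mR^\infty_G)\to\mR^\infty_U$. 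Regarded as an element of $\Lc$, this $\varphi$ satisfies $\varphi\circ\alpha(u)=u\circ\varphi$ for all $u\in U$, the relation making $\varphi$ a candidate correspondence $U\leadsto G$.

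It then remains to verify that both projections out of $\text{stab}(\varphi)$ are isomorphisms. The assignment $u\mapsto(u,\alpha(u))$ lands in $\text{stab}(\varphi)$ and exhibits surjectivity of both projections, so the content is injectivity. For the projection to $U$ I would use that $\varphi$ is injective and $G$ acts faithfully on $\mR^\infty_G$, which forces any $g$ with $\varphi\circ g=\varphi$ to be trivial. The projection to $G$ is the subtle one: if $u\in U$ fixes the image of $\varphi$ pointwise I must conclude $u=e$, and for this I would observe that $\mathrm{im}(\varphi)$ is a $U$-invariant subspace isomorphic to $\alpha^*(\mR^\infty_G)$, on which $U$ therefore acts faithfully. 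I expect this faithfulness argument to be the main obstacle, since it is the one place where the construction could silently fail.

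For part~(ii), let $f:X\to Y$ be both a global equivalence and a global fibration, and let $G$ be any compact Lie subgroup. Part~(i) provides a correspondence $\varphi:U\leadsto G$ with $U$ universal, and the global-fibration hypothesis makes the square~\eqref{eq:fibration characterization} for this correspondence homotopy cartesian, with horizontal maps $\varphi\cdot-:X^G\to X^U$ and $\varphi\cdot-:Y^G\to Y^U$ and verticals $f^G$ (left) and $f^U$ (right). Because $U$ is universal and $f$ is a global equivalence, $f^U$ is a weak equivalence; as weak equivalences are preserved under homotopy base change, the left vertical $f^G$ is a weak equivalence as well. Since $G$ was arbitrary, $f$ is a $\Cc^L$-equivalence.

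Part~(iii) is shorter still. For $f:X\to Y$ a global equivalence between injective $\Lc$-spaces and $G$ a compact Lie subgroup, I again choose the correspondence $\varphi:U\leadsto G$ from part~(i). Naturality of $\varphi\cdot-$ in the $\Lc$-space gives a commuting square whose horizontals $\varphi\cdot-:X^G\to X^U$ and $\varphi\cdot-:Y^G\to Y^U$ are weak equivalences by injectivity of $X$ and $Y$, and whose right vertical $f^U$ is a weak equivalence by the global-equivalence hypothesis. Two-out-of-three then forces the left vertical $f^G$ to be a weak equivalence, so $f$ is a $\Cc^L$-equivalence. Apart from the faithfulness check flagged in part~(i), everything here is a formal consequence of the homotopy-pullback and two-out-of-three properties.
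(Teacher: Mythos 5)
Your argument is correct and essentially the paper's own: part~(i) comes from Proposition~\ref{prop-uniqueness of universal subgroups} together with completeness of the universal subgroup's universe (the paper embeds $\alpha^*(\mR^\infty_{\bar G})$ into $\mR^\infty_G$ in exactly this way), and parts~(ii) and~(iii) are the same formal arguments with the square~\eqref{eq:fibration characterization}, using that the right vertical map is $f^U$ for $U$ universal. One remark: the step you flag as the main obstacle in~(i) is in fact automatic and needs no faithfulness of the $U$-action on $\mathrm{im}(\varphi)$ --- since $u\circ\varphi=\varphi\circ\alpha(u)$ for all $u\in U$ and $\varphi$ is injective, any $(u,g)\in\text{stab}(\varphi)$ satisfies $\varphi\circ g=u\circ\varphi=\varphi\circ\alpha(u)$, hence $g=\alpha(u)$, so $\text{stab}(\varphi)$ is exactly the graph of the isomorphism $\alpha$ and both projections are isomorphisms at once (this is why the paper's proof can simply conclude ``so $\varphi$ is a correspondence'').
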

\begin{proof}
(i)
We let $\bar G$ be a compact Lie subgroup of $\Lc$. 
Proposition~\ref{prop-uniqueness of universal subgroups}
provides a universal subgroup $G$ of $\Lc$ 
and an isomorphism $\alpha:G\to \bar G$.
Since $\mR^\infty_G$ is a complete $G$-universe, 
there is a $G$-equivariant linear isometric embedding
$\varphi:\alpha^*(\mR^\infty_{\bar G})\to\mR^\infty_G$.
The fact that $\varphi$ is $G$-equivariant precisely means 
that $\varphi\circ\alpha(g) =  g\circ\varphi$ for all $g\in G$.
So $\varphi$ is a correspondence from $G$ to $\bar G$.

(ii) We let $f$ be a global equivalence of $\Lc$-spaces 
that is also a global fibration. We let $\bar G$ be any compact Lie subgroup
and $\varphi:G\leadsto\bar G$ a correspondence from a universal subgroup
as provided by part (i).
Then both vertical maps 
in the commutative square \eqref{eq:fibration characterization}
are Serre fibrations because $f$ is a $\Cc^L$-fibration.
The right vertical map is also a weak equivalence because $f$ is a global equivalence.
Since $f$ is a global fibration, the square is also homotopy cartesian,
so the left vertical map is a weak equivalence. Since $\bar G$ was any
compact Lie subgroup, this shows that $f$ is a $\Cc^L$-equivalence.

(iii) 
We let $f:X\to Y$ be a global equivalence between injective $\Lc$-spaces. 
We let $\bar G$ be any compact Lie subgroup
and $\varphi:G\leadsto\bar G$ a correspondence from a universal subgroup
as provided by part (i).
Then both horizontal maps 
in the commutative square \eqref{eq:fibration characterization}
are weak equivalences. 
The right vertical map is also a weak equivalence because $f$ is a global equivalence.
Hence the left vertical map is a weak equivalence. Since $\bar G$ was any
compact Lie subgroup, this shows that $f$ is a $\Cc^L$-equivalence.
\end{proof}

The sets $I=I_{\Cc^L}$ and $J=J_{\Cc^L}$ of generating cofibrations and acyclic cofibrations
for the $\Cc^L$-projective model structure were defined in \eqref{eq:define_I}
respectively  \eqref{eq:define_J}. We add another set of morphisms $K$ that detects
when the squares \eqref{eq:fibration characterization} are homotopy cartesian.
Given a correspondence $\varphi:G\leadsto\bar G$ 
between compact Lie subgroups of $\Lc$, the map
\[  \varphi_\sharp\ :\ \Lc/G \ \to \ 
 \Lc/\bar G  \ , \quad
\psi\cdot G \ \longmapsto \ (\psi \circ\varphi)\cdot \bar G \] 
is a morphism of $\Lc$-spaces that represents the natural transformation
\[ \varphi\cdot - \ : \ X^{\bar G}\ \to \ X^G  \ .\]

\begin{prop}
Let $\varphi:G\leadsto\bar G$ be a correspondence
between compact Lie subgroups of $\Lc$.
Then the morphism $\varphi_\sharp :\Lc/G\to \Lc/\bar G$
is a strong global equivalence of $\Lc$-spaces.
\end{prop}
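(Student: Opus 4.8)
The plan is to reduce everything to the already-established Proposition~\ref{prop:L/G to L_G,V}, which identifies each $\Lc/G$ up to strong global equivalence with a restricted model $\bL(V,\mR^\infty)/G$ built on a finite-dimensional faithful subrepresentation. The conceptual point is that although $\varphi$ is typically far from surjective (so $\varphi_\sharp$ is not itself an isomorphism), the correspondence condition makes $\varphi$ a linear isometric isomorphism onto its image; this becomes an honest isomorphism once we pass to finite-dimensional models, where the infinite-dimensional universe has been "absorbed".

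Concretely, let $\alpha:G\to\bar G$ be the isomorphism determined by the correspondence, so that $\varphi:\alpha^*(\mR^\infty_{\bar G})\to\mR^\infty_G$ is a $G$-equivariant linear isometric embedding. First I would invoke Proposition~\ref{prop:characterize Lie} to choose a faithful finite-dimensional $\bar G$-subrepresentation $V\subseteq\mR^\infty_{\bar G}$. Then $V'=\varphi(V)\subseteq\mR^\infty_G$ is a faithful finite-dimensional $G$-subrepresentation (faithfulness transports along $\alpha$), and $\varphi$ restricts to a linear isometric isomorphism $\varphi|_V:V\to V'$ that intertwines the $\bar G$-action on $V$ with the $G$-action on $V'$ along $\alpha$.

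Next I would assemble the commutative square of $\Lc$-spaces whose vertical maps are the restriction morphisms $\rho_{V'}:\Lc/G\to\bL(V',\mR^\infty)/G$ and $\rho_V:\Lc/\bar G\to\bL(V,\mR^\infty)/\bar G$, whose top map is $\varphi_\sharp$, and whose bottom map $c$ is induced by precomposition with $\varphi|_V$, sending the class of $f\in\bL(V',\mR^\infty)$ to the class of $f\circ\varphi|_V$. Commutativity is a direct check on representatives: both composites send the class of $\psi\in\Lc$ to the class of the embedding $v\mapsto\psi(\varphi(v))$. The routine verifications here --- that $c$ is well defined on orbits (using the $\alpha$-equivariance of $\varphi|_V$), that it is an $\Lc$-equivariant homeomorphism with inverse induced by $(\varphi|_V)^{-1}$, and hence a strong global equivalence --- are where the main bookkeeping lives; keeping the two tautological actions and the two labelings of $\mR^\infty$ straight is the principal obstacle, though none of it is conceptually deep.

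Finally I would conclude by two-out-of-three. Proposition~\ref{prop:L/G to L_G,V} (applied with $A=\ast$) shows $\rho_{V'}$ and $\rho_V$ are strong global equivalences, while $c$ is an isomorphism and hence also a strong global equivalence. Since strong global equivalences are by definition underlying $K$-equivariant homotopy equivalences for every universal subgroup $K$, and equivariant homotopy equivalences satisfy two-out-of-three, the identity $\rho_V\circ\varphi_\sharp=c\circ\rho_{V'}$ forces $\varphi_\sharp$ to be a strong global equivalence as well.
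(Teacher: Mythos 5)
Your proposal is correct and takes essentially the same route as the paper: the paper forms exactly your commutative square, with the restriction morphisms $\rho_{\varphi(V)}\colon \Lc/G\to\bL(\varphi(V),\mR^\infty)/G$ and $\rho_V\colon \Lc/\bar G\to\bL(V,\mR^\infty)/\bar G$ as vertical strong global equivalences (via Proposition~\ref{prop:L/G to L_G,V}) and the isomorphism $\psi G\mapsto \psi(\varphi|_V)\bar G$ along the bottom, then concludes that $\varphi_\sharp$ is a strong global equivalence. The only difference is presentational: you spell out the well-definedness of the bottom map on orbits and the two-out-of-three argument for $K$-equivariant homotopy equivalences, which the paper leaves implicit.
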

\begin{proof}
Proposition \ref{prop:characterize Lie} provides a finite-dimensional faithful
$\bar G$-subrepresentation $V$ of~$\mR^\infty_{\bar G}$.
The space $\varphi(V)$ is then a finite-dimensional faithful
$G$-subrepresentation of $\mR^\infty_G$.
We obtain a commutative square of $\Lc$-spaces
\[ \xymatrix@C=20mm{ 
\bL(\mR^\infty_G,\mR^\infty)/G\ar[r]^-{\varphi_\sharp}\ar[d]_{\rho_{\varphi(V)}} &
\bL(\mR^\infty_{\bar G},\mR^\infty)/\bar G\ar[d]^{\rho_V} \\
\bL(\varphi(V),\mR^\infty)/G\ar[r]^\iso_-{\psi G\mapsto \psi(\varphi|_V)\bar G} &
\bL(V,\mR^\infty)/\bar G
 } \]
in which the vertical maps are restrictions.
Since $\varphi$ restricts to an isomorphism from $V$ to~$\varphi(V)$,
the lower horizontal map is an isomorphism.
Proposition \ref{prop:L/G to L_G,V} shows that the two vertical maps
are strong global equivalences of $\Lc$-spaces.
So the upper horizontal morphism is a strong global equivalence as well.
\end{proof}

We factor the global equivalence $\varphi_\sharp$ 
associated to a correspondence $\varphi:G\leadsto \bar G$
through the mapping cylinder as the composite
\[ \Lc/G\ \xra{\ c(\varphi)\ } \ Z(j) =
(\Lc/G\times[0,1])\cup_{\varphi_\sharp} \Lc/\bar G \ \xra{\ r(\varphi)\ } \  \Lc/\bar G \ ,\]
where $c(\varphi)$ is the `front' mapping  cylinder inclusion and~$r(\varphi)$ 
is the projection, which is a homotopy equivalence of~$\Lc$-spaces.
We then define $\Zc(\varphi)$
 as the set of all pushout product maps
\[  c(\varphi)\Box i_k \ : \  \Lc/G\times D^k \cup_{\Lc/G\times \partial D^k} 
Z(\varphi)\times \partial D^k\ \to \ Z(\varphi)\times D^k \]
for $k\geq 0$, where $i_k:\partial D^k\to D^k$ is the inclusion.
We then define 
\[K \ = \ \bigcup_{G,\bar G,\varphi} \ \Zc(\varphi) \ ,    \]
indexed by all triples consisting of two compact Lie subgroups
of $\Lc$ and a correspondence between them.
By \cite[Prop.\,1.2.16]{schwede-global}, the right lifting property 
with respect to the union $J\cup K$ then
characterizes the global fibrations, i.e., we have shown:

\begin{prop}\label{prop:global fibrations via RLP}
A morphism of $\Lc$-spaces is a global fibration if and only if it has
the right lifting property with respect to the set $J\cup K$.  
\end{prop}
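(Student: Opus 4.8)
The plan is to treat this as an instance of the standard mechanism by which a mapping-cylinder pushout-product set encodes a homotopy-cartesianness condition as a right lifting property; this is exactly the pattern behind the cited \cite[Prop.\,1.2.16]{schwede-global}. The two defining conditions for a global fibration are cleanly separated: $f$ is a $\Cc^L$-fibration, and every square \eqref{eq:fibration characterization} is homotopy cartesian. The first condition is detected by the right lifting property against $J=J_{\Cc^L}$, as already recorded after \eqref{eq:define_J}. Since both the global fibration property and the lifting property against $J\cup K$ contain the $\Cc^L$-fibration requirement, I would first dispatch this, and then—working under the standing assumption that $f$ is a $\Cc^L$-fibration—prove that the right lifting property against $\Zc(\varphi)$ is equivalent to the square \eqref{eq:fibration characterization} being homotopy cartesian, one correspondence $\varphi:G\leadsto\bar G$ at a time.

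The heart of the argument is a two-variable-adjunction translation. By the pushout-product adjunction, $f$ has the right lifting property against the whole set $\Zc(\varphi)=\{c(\varphi)\Box i_k\}_{k\geq 0}$ if and only if the induced map of mapping spaces
\[ \Psi\ :\ \map^\Lc(Z(\varphi),X)\ \to\ \map^\Lc(\Lc/G,X)\times_{\map^\Lc(\Lc/G,Y)}\map^\Lc(Z(\varphi),Y) \]
is an acyclic Serre fibration. I would then identify this map explicitly. The orbit adjunction $\Lc\times_G-\dashv\mathrm{res}$ gives $\map^\Lc(\Lc/G,X)=X^G$, while the projection $r(\varphi):Z(\varphi)\to\Lc/\bar G$, being an $\Lc$-homotopy equivalence, induces a homotopy equivalence $X^{\bar G}=\map^\Lc(\Lc/\bar G,X)\xra{\ \simeq\ }\map^\Lc(Z(\varphi),X)$, and likewise for $Y$. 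Because $r(\varphi)\circ c(\varphi)=\varphi_\sharp$ represents $\varphi\cdot-$, these identifications turn $\Psi$ into the comparison map
\[ X^{\bar G}\ \to\ X^G\times_{Y^G}Y^{\bar G}\ ,\qquad x\longmapsto(\varphi\cdot x,\ f^{\bar G}(x))\ , \]
into the strict pullback of the two maps $\varphi\cdot-:Y^{\bar G}\to Y^G$ and $f^G:X^G\to Y^G$ forming the bottom and right edges of \eqref{eq:fibration characterization}.

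To convert the acyclic-fibration condition on $\Psi$ into homotopy cartesianness I would use the topological enrichment of the $\Cc^L$-projective model structure (Proposition \ref{prop:projective model structure}). The key point is that the front mapping-cylinder inclusion $c(\varphi):\Lc/G\to Z(\varphi)$ is a $\Cc^L$-cofibration: writing $Z(\varphi)$ as the pushout of $\Lc/G\times[0,1]\xla{}\Lc/G\times\partial[0,1]\xra{}\Lc/G\sqcup\Lc/\bar G$ exhibits $c(\varphi)$ as the coproduct inclusion of the cofibrant orbit $\Lc/G$ followed by a cobase change of the $\Cc^L$-cofibration $\Lc/G\times(\partial[0,1]\hookrightarrow[0,1])$, hence a composite of two $\Cc^L$-cofibrations. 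Granting this, the pushout-product property forces $\Psi$ to be a Serre fibration whenever $f$ is a $\Cc^L$-fibration, so the right lifting property against $\Zc(\varphi)$ reduces to $\Psi$ being a weak equivalence. Moreover $f^G:X^G\to Y^G$ is a Serre fibration, so the target of $\Psi$ is a genuine homotopy pullback, and transporting along the homotopy equivalence $r(\varphi)$ shows it models the homotopy pullback of the right and bottom edges of \eqref{eq:fibration characterization}. Thus $\Psi$ is a weak equivalence precisely when \eqref{eq:fibration characterization} is homotopy cartesian. Assembling the per-$\varphi$ equivalences with the $J$-statement then gives: $f$ has the right lifting property against $J\cup K$ iff $f$ is a $\Cc^L$-fibration and all squares \eqref{eq:fibration characterization} are homotopy cartesian, i.e.\ iff $f$ is a global fibration.

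The step I expect to be the main obstacle is the bookkeeping of the middle and third paragraphs: verifying that the adjunction really identifies $\Psi$ with the displayed comparison map, keeping track of variances and of the homotopy equivalence $r(\varphi)$, and in particular checking that $c(\varphi)$ is a genuine $\Cc^L$-cofibration rather than merely an h-cofibration. It is precisely the latter that, via the pushout-product property, upgrades the `acyclic Serre fibration' formulation coming from the lifting property into the `weak equivalence', hence homotopy-cartesian, formulation of the global fibration condition.
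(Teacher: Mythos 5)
Your proposal is correct and follows exactly the mechanism the paper invokes: the paper's proof is the one-line appeal to \cite[Prop.\,1.2.16]{schwede-global}, and your argument---separating the $J$-part from the $K$-part, translating the right lifting property against $\Zc(\varphi)$ through the pushout-product adjunction into the map $\Psi$ being an acyclic Serre fibration, checking that $c(\varphi)$ is a $\Cc^L$-cofibration between cofibrant objects (indeed $\Lc/G\times(\partial[0,1]\hookrightarrow[0,1])$ is a generating cofibration, so your decomposition works), and then using $r(\varphi)$ and $\map^\Lc(\varphi_\sharp,X)=\varphi\cdot-$ together with right properness to identify $\Psi$ up to weak equivalence with $(f^{\bar G},\varphi\cdot-)$---is precisely the content of that cited proposition. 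In short, you have correctly unpacked the black box the paper cites, with no gap.
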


Now we have all the ingredients to establish the global model structure
of $\Lc$-spaces. As the proof will show, the set $I=I_{\Cc^L}$
defined in \eqref{eq:define_I} is a set of generating cofibrations.
By Proposition \ref{prop:global fibrations via RLP},
the set $J\cup K=J_{\Cc^L}\cup K$ is a set of generating acyclic cofibrations.

\begin{theorem}[Global model structure for $\Lc$-spaces]\label{thm:global L-spaces} 
The $\Cc^L$-cofibrations, global fibrations and global equivalences form a 
cofibrantly generated proper topological model structure
on the category of $\Lc$-spaces, the {\em global model structure}.
The fibrant objects in the global model structure are the injective $\Lc$-spaces.
Every $\Cc^L$-cofibration is an h-cofibration of $\Lc$-spaces and
a closed embedding of underlying spaces.
\end{theorem}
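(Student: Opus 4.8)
\emph{The plan} is to produce the global model structure by the standard recognition principle for cofibrantly generated model structures \cite[Thm.\,2.1.19]{hovey-book}, taking $I=I_{\Cc^L}$ of \eqref{eq:define_I} as generating cofibrations, the set $J\cup K$ as generating acyclic cofibrations, and the global equivalences as weak equivalences. Several inputs are already in hand. The $\Cc^L$-projective model structure of Proposition \ref{prop:projective model structure} tells us that the $I$-injectives are exactly the $\Cc^L$-acyclic fibrations, and Proposition \ref{prop:global fibrations via RLP} identifies the $(J\cup K)$-injectives with the global fibrations. The global equivalences clearly satisfy the two-out-of-three property and are closed under retracts, being detected by the weak-equivalence class on the fixed-point functors $(-)^G$ for universal $G$. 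Both sets permit the small object argument, since the relevant domains are small relative to the closed embeddings along which the cell attachments take place. The recognition principle then reduces the theorem to two statements: that the $I$-injectives coincide with the maps that are simultaneously global equivalences and global fibrations, and that every relative $(J\cup K)$-cell complex is both a $\Cc^L$-cofibration and a global equivalence.

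The first statement is quick. An $I$-injective is a $\Cc^L$-acyclic fibration, hence a $\Cc^L$-equivalence and a fortiori a global equivalence; its defining squares \eqref{eq:fibration characterization} are homotopy cartesian because all four maps are then weak equivalences and the vertical ones are Serre fibrations, so it is also a global fibration. Conversely, a map that is a global fibration and a global equivalence is a $\Cc^L$-equivalence by Proposition \ref{prop:level vs global fibration}(ii) and a $\Cc^L$-fibration by definition, hence a $\Cc^L$-acyclic fibration and thus an $I$-injective.

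\emph{The heart of the argument} is the second statement, and more precisely its global-equivalence half. The $\Cc^L$-cofibration half is routine: $J\subseteq J_{\Cc^L}$ consists of $\Cc^L$-cofibrations, the maps $c(\varphi)\Box i_k$ of $K$ are pushout products of $\Cc^L$-cofibrations with the boundary inclusions $i_k$, and $\Cc^L$-cofibrations are closed under these operations. Before treating global equivalences I would establish the last assertion of the theorem, that every $\Cc^L$-cofibration is an h-cofibration and a closed embedding: this holds for the generators in $I$ and is inherited through cobase change, coproducts, sequential compositions and retracts. Now $c(\varphi)\colon\Lc/G\to Z(\varphi)$ is a global equivalence by two-out-of-three applied to the factorization $\varphi_\sharp=r(\varphi)\circ c(\varphi)$, in which $\varphi_\sharp$ is a strong global equivalence and $r(\varphi)$ is a homotopy equivalence of $\Lc$-spaces, and it is an h-cofibration as a mapping cylinder inclusion. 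To see that $c(\varphi)\Box i_k$ is a global equivalence I would apply the gluing lemma, Proposition \ref{prop:global equiv basics}(v), to the morphism from the pushout diagram $\Lc/G\times D^k\xleftarrow{\Lc/G\times i_k}\Lc/G\times\partial D^k\xrightarrow{c(\varphi)\times\partial D^k}Z(\varphi)\times\partial D^k$ to the pushout diagram $Z(\varphi)\times D^k\xleftarrow{Z(\varphi)\times i_k}Z(\varphi)\times\partial D^k\xrightarrow{\ =\ }Z(\varphi)\times\partial D^k$; the three vertical maps are products of the global equivalence $c(\varphi)$ (and of an identity) with fixed spaces, hence global equivalences, and the two left-hand maps are h-cofibrations, so the induced map on pushouts — which is exactly $c(\varphi)\Box i_k$ — is a global equivalence. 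Finally, since every map of $J\cup K$ is a $\Cc^L$-cofibration, hence an h-cofibration and a closed embedding, the closure of global equivalences under pushout along h-cofibrations (Proposition \ref{prop:global equiv basics}(vi)) and under sequential composition of closed embeddings (Proposition \ref{prop:global equiv basics}(iii),(iv)) promotes the generators to arbitrary relative $(J\cup K)$-cell complexes.

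It remains to read off the supplementary clauses. The fibrant objects are those $X$ for which $X\to\ast$ is a global fibration, which by definition is precisely injectivity of $X$. Left properness is the special case of Proposition \ref{prop:global equiv basics}(vi) in which one pushes a global equivalence along a $\Cc^L$-cofibration (an h-cofibration); right properness follows because the fixed-point functors $(-)^G$ preserve pullbacks and carry global fibrations to Serre fibrations, and spaces are right proper. The topological (enriched) structure comes from the pushout-product axiom, whose cofibration part is inherited from the $\Cc^L$-projective structure of Proposition \ref{prop:projective model structure} and whose acyclicity part is exactly the pushout-product computation carried out above. The main obstacle throughout is this last computation together with the cellular bookkeeping: one must control global equivalences across the transfinite cellular constructions, which is precisely what the closure properties of Proposition \ref{prop:global equiv basics} and the h-cofibration/closed-embedding assertion are designed to supply.
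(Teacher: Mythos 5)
Your proposal is correct, and it runs on the same engine as the paper's proof: the same generating sets $I$ and $J\cup K$, Proposition~\ref{prop:level vs global fibration}~(ii) to identify the maps that are both global fibrations and global equivalences with the $\Cc^L$-acyclic fibrations, Proposition~\ref{prop:global fibrations via RLP} to identify the $(J\cup K)$-injectives with the global fibrations, and the closure properties of Proposition~\ref{prop:global equiv basics} to propagate global equivalences through cell attachments. The packaging differs in three places, all legitimate. First, you invoke the recognition theorem for cofibrantly generated model structures, whereas the paper verifies MC1--MC5 by hand; the paper's retract trick for MC4 (factor an acyclic cofibration $j=q\circ i$ with $i$ a $(J\cup K)$-cell complex, observe $q$ is a $\Cc^L$-acyclic fibration by two-out-of-three and Proposition~\ref{prop:level vs global fibration}~(ii), lift, conclude $j$ is a retract of $i$) is exactly what the recognition theorem internalizes, so nothing is lost, and your reduction to the two statements covers precisely the hypotheses of that theorem. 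Second, for the claim that every $\Cc^L$-cofibration is an h-cofibration, you argue by generators and closure under retracts, cobase change, coproducts and sequential composition; the paper instead lifts once and for all against the evaluation $\ev\colon X^{[0,1]}\to X$, which is an acyclic fibration in the $\Cc^L$-projective model structure, so that \emph{every} $\Cc^L$-cofibration acquires the homotopy extension property directly---slicker, but your cellular route is equally valid. Third, your gluing-lemma verification that $c(\varphi)\Box i_k$ is a global equivalence, via the map of pushout diagrams and Proposition~\ref{prop:global equiv basics}~(v), is actually \emph{more} explicit than the paper, which compresses this step into the phrase ``we argued above that they are global equivalences''; your computation (with $c(\varphi)$ a global equivalence by two-out-of-three from the strong global equivalence $\varphi_\sharp$ and the homotopy equivalence $r(\varphi)$) is the genuine heart of the theorem, as you rightly emphasize.

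One local slip, which does not affect the conclusion but should be corrected: you justify that the squares \eqref{eq:fibration characterization} of a $\Cc^L$-acyclic fibration are homotopy cartesian ``because all four maps are then weak equivalences''. The horizontal maps $\varphi\cdot-\colon X^{\bar G}\to X^G$ are \emph{not} weak equivalences for a general $\Lc$-space---that is exactly the injectivity condition, and it fails already for $X=\Lc$ itself (for a correspondence $\varphi\colon G\leadsto\bar G$ with $\mR^\infty_{\bar G}$ an incomplete universe, $\Lc^{\bar G}$ can even be disconnected while $\Lc^G$ is contractible). What is true, and what the paper uses, is that both \emph{vertical} maps are acyclic Serre fibrations, and that alone forces the square to be homotopy cartesian: since $f^G$ is a Serre fibration, the strict pullback $Y^{\bar G}\times_{Y^G}X^G$ computes the homotopy pullback, its projection to $Y^{\bar G}$ is an acyclic Serre fibration by base change, and two-out-of-three applied to $f^{\bar G}$ shows the comparison map from $X^{\bar G}$ is a weak equivalence.
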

\begin{proof}
We start with the last statement and let $f:A\to B$ be a $\Cc^L$-cofibration
of $\Lc$-spaces. 
For every $\Lc$-space $X$ the evaluation map
$\ev:X^{[0,1]}\to X$ sending a path $\omega$ to $\omega(0)$
is an acyclic fibration in the $\Cc^L$-projective model structure
of Proposition \ref{prop:projective model structure}.
Given a morphism $\varphi:B\to X$ and a homotopy
$H:A\times [0,1]\to X$ starting with $\varphi f$, we let $\hat H:A\to X^{[0,1]}$
be the adjoint and choose a lift in the commutative square:
\[ \xymatrix{ A\ar[d]_f\ar[r]^-{\hat H} & X^{[0,1]}\ar[d]^{\ev}\\
B\ar[r]_-\varphi\ar@{-->}[ur]^(.4)\lambda &X}  \]
The adjoint of the lift $\lambda$ is then the desired homotopy extending $\varphi$
and $H$. So the morphism $f$ is an h-cofibration.
Every h-cofibration of $\Lc$-spaces is in particular an h-cofibration
of underlying non-equivariant spaces, and hence a closed embedding
by \cite[Prop.\,8.2]{lewis-thesis} or \cite[Prop.\,A.31]{schwede-global}.

Now we turn to the model category axioms, where we use the numbering
as in~\cite[3.3]{dwyer-spalinski}. 
The category of $\Lc$-spaces is complete and cocomplete,
so axiom MC1 holds.
Global equivalences satisfy the 2-out-of-3 axiom MC2. 
Global equivalences, $\Cc^L$-cofibrations and global fibrations 
are closed under retracts, so axiom MC3 holds.

The $\Cc^L$-projective model structure 
of Proposition~\ref{prop:projective model structure}
shows that every morphism of $\Lc$-spaces
can be factored as $f\circ i$ for a $\Cc^L$-cofibration $i$
followed by a $\Cc^L$-equivalence $f$ that is also a $\Cc^L$-fibration.
For every correspondence $\varphi:G\leadsto\bar G$ 
between compact Lie subgroups, both vertical maps in the commutative square 
of fixed point spaces \eqref{eq:fibration characterization}
are then weak equivalences, so the square is homotopy cartesian.
The morphism $f$ is thus a global fibration and a global equivalence,
so this provides one of the factorizations as required by MC5.
For the other half of the factorization axiom MC5
we apply the small object argument
(see for example~\cite[7.12]{dwyer-spalinski} or~\cite[Thm.\,2.1.14]{hovey-book})
to the set $J\cup K$.
All morphisms in~$J$ are $\Cc^L$-cofibrations and $\Cc^L$-equivalences.
Since $\Lc/G$ is $\Cc^L$-cofibrant for every compact Lie subgroup $G$ of $\Lc$,
the morphisms in $K$ are also $\Cc^L$-cofibrations, 
and we argued above that they are global equivalences.
The small object argument provides a functorial factorization
of any given morphism of $\Lc$-spaces as a composite
\[ X \ \xra{\ i \ }\ W \ \xra{\ q \ }\ Y \]
where $i$ is a sequential composition of cobase changes of coproducts
of morphisms in~$J\cup K$, and~$q$ has the right lifting property with respect 
to $J\cup K$. 
Since all morphisms in $J\cup K$ are $\Cc^L$-cofibrations
and global equivalences, the morphism $i$ is a
$\Cc^L$-cofibration and a global equivalence,
using the various closure properties of the class of global equivalences
listed in Proposition~\ref{prop:global equiv basics}.
Moreover, $q$ is a global fibration by Proposition~\ref{prop:global fibrations via RLP}.

Now we show the lifting properties MC4. 
By Proposition~\ref{prop:level vs global fibration}~(ii)
a morphism that is both a global equivalence and a global fibration
is a $\Cc^L$-equivalence, and hence an acyclic fibration
in the $\Cc^L$-projective model structure. So every morphism that is
simultaneously a global equivalence and  a global fibration has the
right lifting property with respect to $\Cc^L$-cofibrations.
Now we let $j:A\to B$ be a $\Cc^L$-cofibration that is also a global equivalence and 
we show that it has the left lifting property with respect to all global fibrations.
We factor~$j=q\circ i$, via the small object argument for $J\cup K$,
where $i:A\to W$ is a $(J\cup K)$-cell complex and $q:W\to B$ a global fibration.
Then $q$ is a global equivalence since $j$ and~$i$ are,
and hence an acyclic fibration in the $\Cc^L$-projective model structure,
again by Proposition~\ref{prop:level vs global fibration}~(ii).
Since $j$ is a $\Cc^L$-cofibration, a lifting in
\[\xymatrix{
A \ar[r]^-i \ar[d]_j & W \ar[d]^q_(.6)\sim \\
B \ar@{=}[r] \ar@{..>}[ur] & B }\]
exists. Thus $j$ is a retract of the morphism~$i$ that has the left lifting property
with respect to global fibrations. But then $j$ itself has this lifting property.
This finishes the verification of the model category axioms.
Alongside we have also specified sets of generating cofibrations $I$
and generating acyclic cofibrations $J\cup K$.
Fixed points commute with sequential colimits along closed embeddings
(compare \cite[Prop.\,B.1 (ii)]{schwede-global})
and $\Cc^L$-cofibrations are closed embeddings.
So the sources and targets of all morphisms in $I$ and $J\cup K$ are small with
respect to sequential colimits of $\Cc^L$-cofibrations. So the global
model structure is cofibrantly generated.

Every $\Cc^L$-cofibration is in particular an h-cofibration of $\Lc$-spaces, 
and hence an h-cofibration of underlying $G$-spaces for every 
universal subgroup $G$.
So left properness follows from the gluing lemma for $G$-weak equivalences, compare
 \cite[Prop.\,B.6]{schwede-global}.
Since global equivalences are detected by fixed points with respect
to universal subgroups, 
right properness follows from right properness of the Quillen model structure
of spaces and the fact that fixed points preserve pullbacks.

The global model structure is topological by 
\cite[Prop.\,B.5]{schwede-global}, where we take $\mathcal G$
as the set of $\Lc$-spaces $\Lc/G$ for all compact Lie subgroups $G$ of $\Lc$,
and we take $\mathcal Z$ as the set of acyclic cofibrations
$c(\varphi):\Lc/G\to (\Lc/G\times[0,1])\cup_{\varphi_\sharp}\Lc/\bar G$
for $(G,\bar G,\varphi)$ as in the definition of the set~$K$.
\end{proof}

We end this section with a brief discussion on the interaction 
of the global model structure with the operadic product of $\Lc$-spaces.
We denote by
\[ \Lc(2)\ = \ \bL(\mR^\infty\oplus\mR^\infty,\,\mR^\infty) \]
the space of binary operations in the linear isometries operad.
It comes with a left action of~$\Lc$ and a right action of~$\Lc^2$ by
\[ \Lc\times \Lc(2) \times \Lc^2\ \to \ \Lc(2)\ , \quad
(f,\psi,(g,h)) \ \longmapsto \ f\circ\psi\circ(g\oplus h) \  . \]
Given two~$\Lc$-spaces~$X$ and~$Y$ we can coequalize 
the right~$\Lc^2$-action on~$\Lc(2)$
with the left~$\Lc^2$-action on the product~$X\times Y$ and form
\[  X\boxtimes_\Lc Y \ = \ \Lc(2) \times_{\Lc\times\Lc} (X\times Y)\ . \]
The left $\Lc$-action on~$\Lc(2)$ by postcomposition
descends to an~$\Lc$-action on this operadic product.
Some care has to be taken when analyzing this construction: because the monoid~$\Lc$
is not a group, it may be hard to figure out when
two elements of~$\Lc(2) \times X\times Y$ become equal in the coequalizer.
The operadic product~$\boxtimes_\Lc$ 
is coherently associative and commutative, but it does {\em not} have a unit object.
The monoids (respectively commutative monoids) with respect to~$\boxtimes_\Lc$
are essentially $A_\infty$-monoids (respectively $E_\infty$-monoids). 
We refer the reader to~\cite[Sec.\,4]{blumberg-cohen-schlichtkrull} for more details.

The next result shows that up to global equivalence
the operadic product of $\Lc$-spaces coincides with the categorical product.
Given two $\Lc$-spaces $X$ and $Y$, we define a natural~$\Lc$-linear map
\[ \rho_{X,Y}\ : \ X\boxtimes_\Lc Y \ \to\ X\times Y\text{\qquad by\qquad}
[\varphi;x,y]\ \longmapsto \ ( (\varphi i_1)\cdot x, (\varphi i_2)\cdot y)\ .\]
Here $i_1,i_2:\mR^\infty\to\mR^\infty\oplus\mR^\infty$ are the two direct summand embeddings.
The following theorem has a non-equivariant precursor:
Blumberg, Cohen and Schlichtkrull show 
in~\cite[Prop.\,4.23]{blumberg-cohen-schlichtkrull} 
that for certain $\Lc$-spaces 
(those that are cofibrant in the model structure of 
\cite[Thm.\,4.15]{blumberg-cohen-schlichtkrull}),
the morphism~$\rho_{X,Y}$ is a non-equivariant weak equivalence.
We show that a much stronger conclusion holds without any hypothesis on~$X$ and~$Y$.

\begin{theorem}\label{thm:box to times}
For all $\Lc$-spaces $X$ and $Y$, the morphism 
$\rho_{X,Y}:X\boxtimes_\Lc Y\to X\times Y$ is a strong global equivalence. 
In particular, the functor $X\boxtimes_\Lc -$ preserves global equivalences. 
\end{theorem}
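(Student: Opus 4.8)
The plan is to verify directly that for each universal subgroup $G$ of $\Lc$ the map $\rho_{X,Y}$ is a $G$-homotopy equivalence, which is precisely what a strong global equivalence demands. So I fix a universal subgroup $G$; then $\mR^\infty$ is a complete $G$-universe, and I regard everything as a $G$-space via the inclusion $G\subseteq\Lc$ acting on the left, so that $g\cdot[\varphi;x,y]=[g\circ\varphi;x,y]$. A one-line check shows $\rho_{X,Y}$ is $G$-equivariant. To build an explicit $G$-equivariant inverse, I observe that $\mR^\infty\oplus\mR^\infty$ with the diagonal $G$-action is again a complete $G$-universe, so I may choose a $G$-equivariant linear isometric embedding $\psi\in\Lc(2)$, meaning $\psi\circ(g\oplus g)=g\circ\psi$. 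I then define $s:X\times Y\to X\boxtimes_\Lc Y$ by $s(x,y)=[\psi;x,y]$. Using the coequalizer relation $[\varphi\circ(g\oplus h);x,y]=[\varphi;g\cdot x,h\cdot y]$ together with $G$-equivariance of $\psi$ gives $s(g\cdot x,g\cdot y)=[\psi;g\cdot x,g\cdot y]=[g\circ\psi;x,y]=g\cdot s(x,y)$, so $s$ is $G$-equivariant.

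Next I would treat the easy composite. By definition $\rho_{X,Y}\circ s$ sends $(x,y)$ to $(p\cdot x,q\cdot y)$, where $p=\psi i_1$ and $q=\psi i_2$ are $G$-equivariant isometric self-embeddings of $\mR^\infty$, i.e.\ points of $\bL(\mR^\infty,\mR^\infty)^G$. The pivotal input is that this space of $G$-equivariant self-embeddings of a complete $G$-universe is contractible, hence path-connected, by Proposition~\ref{prop:EKG infinite}. Choosing paths from $p$ and $q$ to the identity inside $\bL(\mR^\infty,\mR^\infty)^G$ and letting them act on $X$ and $Y$ yields a $G$-homotopy $\rho_{X,Y}\circ s\simeq_G\mathrm{id}_{X\times Y}$.

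The hard part will be the reverse composite $s\circ\rho_{X,Y}$, which is induced by the self-map $\Phi$ of $\Lc(2)$ given by $\varphi\mapsto\psi\circ((\varphi i_1)\oplus(\varphi i_2))$. Since $(\varphi\circ(g\oplus h))\circ i_1=(\varphi i_1)\circ g$ and likewise for $i_2$, a short computation shows $\Phi$ is equivariant for the right $\Lc^2$-action, so it descends to $s\circ\rho_{X,Y}$ on the balanced product. I therefore want a homotopy from $\mathrm{id}_{\Lc(2)}$ to $\Phi$ through maps that are \emph{simultaneously} right $\Lc^2$-equivariant and left $G$-equivariant, for then inducing on $X\boxtimes_\Lc Y$ produces the desired $G$-homotopy $\mathrm{id}\simeq_G s\circ\rho_{X,Y}$. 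The obstacle is that one cannot naively rotate $\varphi$ into $\Phi(\varphi)$, since their images need not be orthogonal. I would resolve this by a ``make room, then rotate'' argument: fix a $G$-equivariant orthogonal splitting of $\mR^\infty$ into two complete $G$-subuniverses, with associated $G$-equivariant isometries $\sigma_e,\sigma_o\in\bL(\mR^\infty,\mR^\infty)^G$ onto the two summands. Left multiplication by a path in $\bL(\mR^\infty,\mR^\infty)^G$ (again available by Proposition~\ref{prop:EKG infinite}) connects $\mathrm{id}_{\Lc(2)}$ to $\varphi\mapsto\sigma_e\circ\varphi$ and $\Phi$ to $\varphi\mapsto\sigma_o\circ\Phi(\varphi)$ through right $\Lc^2$-equivariant, left $G$-equivariant maps, since left multiplication preserves both equivariances. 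Finally, because $\sigma_e\circ\varphi$ and $\sigma_o\circ\Phi(\varphi)$ now have orthogonal images, the rotation $\Theta_t(\varphi)=\cos(\tfrac{\pi t}{2})\,\sigma_e\circ\varphi+\sin(\tfrac{\pi t}{2})\,\sigma_o\circ\Phi(\varphi)$ is an isometric embedding for every $t$ and interpolates between the two; I would check that it too is right $\Lc^2$-equivariant (both summands transform correctly under precomposition by $g\oplus h$) and left $G$-equivariant (using $G$-equivariance of $\psi$, $\sigma_e$, and $\sigma_o$). Concatenating these three homotopies and passing to the coequalizer completes the proof that $\rho_{X,Y}$ is a $G$-homotopy equivalence, hence a strong global equivalence.

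For the final clause I would use naturality of $\rho$. Given a global equivalence $f:Y\to Y'$, the naturality square relating $\mathrm{id}_X\boxtimes_\Lc f$ and $\mathrm{id}_X\times f$ through the two instances of $\rho$ commutes; the horizontal maps are strong global equivalences by the main statement, while $\mathrm{id}_X\times f$ is a global equivalence by Proposition~\ref{prop:global equiv basics}(ii). The two-out-of-three property then forces $\mathrm{id}_X\boxtimes_\Lc f$ to be a global equivalence, so $X\boxtimes_\Lc-$ preserves global equivalences.
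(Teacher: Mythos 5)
Your proof is correct and follows essentially the same route as the paper: the same $G$-equivariant section $\psi_*$ built from a $G$-equivariant $\psi\in\Lc(2)$, the same treatment of $\rho_{X,Y}\circ\psi_*$ via contractibility of the $G$-fixed self-embedding space of a complete universe (for which the paper cites \cite[II Lemma 1.5]{lms} directly, rather than deducing it from Proposition~\ref{prop:EKG infinite}), and a homotopy from the identity to the map inducing $\psi_*\circ\rho_{X,Y}$ through self-maps of $\Lc(2)$ that are simultaneously left-$G$- and right-$\Lc^2$-equivariant. The only difference is packaging: your three-stage ``make room, then rotate'' homotopy through $\sigma_e$, $\sigma_o$ and $\Theta_t$ is accomplished in the paper by the single formula $H(\varphi,t)(v,w)=\bigl(\varphi(v,tw),\,\varphi(0,\sqrt{1-t^2}\cdot w)\bigr)$, which performs the same rotation using the two orthogonal summands of $\mR^\infty\oplus\mR^\infty$ as the ``room'', followed by one contraction path linking $\psi i_1$ to the identity.
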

\begin{proof}
We let~$G$ be a universal subgroup of~$\Lc$. We choose a $G$-equivariant
linear isometry
\[ \psi\ : \ \mR^\infty_G\oplus\mR^\infty_G \ \iso \ \mR^\infty_G \]
and define a continuous map
\[ \psi_* \ : \ X\times Y \ \to \ X\boxtimes_\Lc Y \text{\qquad by\qquad} 
\psi_*(x,y)\ = \ [\psi,x,y]\ .\]
The $G$-equivariance means explicitly that
$\psi(g\oplus g) =  g \psi$ for all~$g\in G$,
and so the map~$\psi_*$ is~$G$-equivariant (but {\em not} $\Lc$-linear).

The composite~$\rho_{X,Y}\circ\psi_*: X\times Y \to X\times Y$
is given by
\[ \rho_{X,Y}(\psi_*(x,y)) \ = \ ( (\psi i_1)\cdot x, (\psi i_2)\cdot y)\ .\]
Since $\mR^\infty_G$ is a complete $G$-universe, 
the space of $G$-equivariant linear isometric self-embeddings of~$\mR^\infty_G$
is contractible, see for example~\cite[II Lemma 1.5]{lms};
so there is a path of $G$-equivariant linear isometric self-embeddings
linking~$\psi i_1$ to the identity of~$\mR^\infty_G$.
Such a path induces a $G$-equivariant homotopy from the map 
$(\psi i_1)\cdot -:X\to X$ to the identity of~$X$;
similarly, $(\psi i_2)\cdot -$ is $G$-homotopic to the identity of~$Y$.
So altogether we conclude that $\rho_{X,Y}\circ \psi_*$
is $G$-homotopic to the identity.

To analyze the other composite we define a continuous map
\begin{align*}
 H \ : \  \bL(\mR^\infty\oplus\mR^\infty,\mR^\infty)\times [0,1] \ &\to \ 
\bL(\mR^\infty\oplus\mR^\infty,\mR^\infty\oplus\mR^\infty)\\
\text{by\qquad}
H(\varphi,t)(v,w)\ &= \ \left( \varphi(v, t w),\ \varphi(0, \sqrt{1-t^2}\cdot w) \right)\ .  
\end{align*}
Then
\[ H(\varphi,0)\ = \ (\varphi i_1)\oplus (\varphi i_2)
\text{\qquad and\qquad} 
H(\varphi,1) \ = \ i_1\varphi \ . \]
Moreover, for every~$t\in[0,1]$ the map $H(-,t)$ is equivariant for the
left~$\Lc$-action (with diagonal action on the target)
and for the right~$\Lc^2$-action.
So we can define a homotopy of $G$-equivariant maps
(which are {\em not} $\Lc$-linear)
\begin{align*}
 K \ : \ (X\boxtimes_\Lc Y)\times [0,1] \ &\to \ X\boxtimes_\Lc Y 
\text{\qquad by\qquad}
K([\varphi;x,y],t)\ = \ [ \psi H(\varphi,t);  x, y] \  .  
\end{align*}
Then 
\begin{align*}
 K([\varphi;x,y],0) \ = \ [ \psi H(\varphi,0); x, y] \
&= \ [ \psi ( (\varphi i_1) \oplus (\varphi i_2)); x, y ] \\
&= \ [\psi; (\varphi i_1) \cdot x, (\varphi i_2)\cdot  y] 
\ = \   \psi_*(\rho_{X,Y}[\varphi;x,y])
\end{align*}
and
\begin{align*}
 K([\varphi;x,y],1) \ &= \ [\psi H(\varphi,1); x, y ] \ 
= \ [  \psi i_1 \varphi; x, y ]  \  = \ (\psi i_1)\cdot  [ \varphi;  x, y ] \ .
\end{align*}
As in the first part of this proof,
$\psi i_1$ can be linked to the identity of~$\mR^\infty_G$
by a path of $G$-equivariant linear isometric self-embeddings,
and such a path induces another $G$-equivariant homotopy from the map 
$(\psi i_1)\cdot -:X\boxtimes_\Lc Y\to X\boxtimes_\Lc Y$ 
to the identity of~$X\boxtimes_\Lc Y$.
So altogether we have exhibited a $G$-homotopy
between~$\psi_*\circ\rho_{X,Y}$ and the identity.
Since the universal subgroup~$G$ was arbitrary,
this shows that~$\rho_{X,Y}$ is a strong global equivalence.
\end{proof}

The following `pushout product property' is the concise way to 
formulate the compatibility of the global model structure and operadic product.

\begin{prop}\label{prop:global is monoidal}
The global model structure on the category of $\Lc$-spaces 
satisfies the pushout product property with respect to the operadic box product:
for all $\Cc^L$-cofibrations of $\Lc$-spaces $i:A\to B$ and $j:X\to Y$,
the pushout product morphism
\[ i\Box j \ = \ (i\boxtimes_\Lc Y) \cup (B\boxtimes_\Lc j)\ : \ 
( A\boxtimes_\Lc Y) \cup_{A\boxtimes_\Lc X} (B\boxtimes_\Lc X) \ \to \ 
B\boxtimes_\Lc Y \]
is a $\Cc^L$-cofibration. If moreover $i$ or $j$ is a global equivalence,
then so is $i\Box j$.
\end{prop}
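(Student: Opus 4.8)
The plan is to prove the two assertions separately, handling the pushout product property for $\Cc^L$-cofibrations first and the acyclicity statement second. Throughout I use that $\boxtimes_\Lc$ commutes with colimits in each variable: since $\bT$ is cartesian closed, $X\times-$ preserves colimits, and the coequalizer functor $\Lc(2)\times_{\Lc\times\Lc}(-)$ is a left adjoint, so $X\boxtimes_\Lc-$ preserves colimits; by commutativity of $\boxtimes_\Lc$ the same holds in the other variable. By the standard reduction for pushout products of a bifunctor that preserves colimits separately in each variable, it then suffices to show that $s\Box t$ is a $\Cc^L$-cofibration whenever $s$ and $t$ range over the generating cofibrations \eqref{eq:define_I}.

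So I would take $s:\Lc/G\times\partial D^k\to\Lc/G\times D^k$ and $t:\Lc/H\times\partial D^l\to\Lc/H\times D^l$. Since a space with trivial $\Lc$-action factors out of the coequalizer defining $\boxtimes_\Lc$, there is a natural isomorphism $(P\times S)\boxtimes_\Lc(Q\times T)\cong(P\boxtimes_\Lc Q)\times(S\times T)$ for spaces $S,T$, and a direct computation of the pushout product object gives
\[ s\Box t\ \cong\ (\Lc/G\boxtimes_\Lc\Lc/H)\times(\iota_k\Box\iota_l)\ \cong\ (\Lc/G\boxtimes_\Lc\Lc/H)\times(\partial D^{k+l}\hookrightarrow D^{k+l})\ , \]
using that the pushout product of the inclusions $\iota_k:\partial D^k\to D^k$ and $\iota_l:\partial D^l\to D^l$ in spaces is the boundary inclusion of $D^k\times D^l\cong D^{k+l}$. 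The decisive point is the identification of $\Lc/G\boxtimes_\Lc\Lc/H$. Unwinding the coequalizer and using the homeomorphism $\Lc(2)\times_{\Lc\times\Lc}(\Lc\times\Lc)\cong\Lc(2)$, I obtain $\Lc/G\boxtimes_\Lc\Lc/H\cong\bL(\mR^\infty\oplus\mR^\infty,\mR^\infty)/(G\times H)$, where $G\times H$ acts by precomposition with $g\oplus h$ on the first and second summand. Choosing a linear isometry $\mR^\infty\oplus\mR^\infty\cong\mR^\infty$ turns $\mR^\infty\oplus\mR^\infty$ into a faithful $(G\times H)$-representation, hence exhibits $G\times H$ (by Proposition \ref{prop:characterize Lie}) as a compact Lie subgroup $\Gamma$ of $\Lc$; transporting along this isometry yields an $\Lc$-equivariant homeomorphism $\Lc/G\boxtimes_\Lc\Lc/H\cong\Lc/\Gamma$ with $\Gamma\in\Cc^L$. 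Since $\emptyset\to\Lc/\Gamma$ is itself a generating cofibration, $\Lc/\Gamma$ is $\Cc^L$-cofibrant, and because the global model structure is topological (Theorem \ref{thm:global L-spaces}) its product with the cofibration $\partial D^{k+l}\hookrightarrow D^{k+l}$ of spaces is a $\Cc^L$-cofibration. This proves the first claim.

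For the acyclicity claim I would assume, using the commutativity of $\boxtimes_\Lc$ to interchange the two factors if necessary, that $j:X\to Y$ is the global equivalence, and consider the defining pushout square of $i\Box j$ with corner $P=(A\boxtimes_\Lc Y)\cup_{A\boxtimes_\Lc X}(B\boxtimes_\Lc X)$. By Theorem \ref{thm:box to times} the maps $A\boxtimes_\Lc j$ and $B\boxtimes_\Lc j$ are global equivalences. The map $i\boxtimes_\Lc X$ is an h-cofibration: $i$ is a $\Cc^L$-cofibration, hence an h-cofibration by Theorem \ref{thm:global L-spaces}, and $-\boxtimes_\Lc X$ preserves h-cofibrations because it preserves pushouts and commutes with $-\times[0,1]$, hence preserves the mapping-cylinder retraction witnessing the homotopy extension property. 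Now $B\boxtimes_\Lc X\to P$ is the cobase change of the global equivalence $A\boxtimes_\Lc j$ along the h-cofibration $i\boxtimes_\Lc X$, so it is a global equivalence by Proposition \ref{prop:global equiv basics}~(vi). Since the composite $B\boxtimes_\Lc X\to P\xra{\,i\Box j\,}B\boxtimes_\Lc Y$ equals $B\boxtimes_\Lc j$, which is a global equivalence, the two-out-of-three property forces $i\Box j$ to be a global equivalence as well.

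The routine parts are the reduction to generating cofibrations and the bookkeeping of the external products; the one step carrying real content is the identification $\Lc/G\boxtimes_\Lc\Lc/H\cong\Lc/\Gamma$. The subtlety there is that $G\times H$ enters through the representation $\mR^\infty_G\oplus\mR^\infty_H$, which is faithful but \emph{not} a complete $(G\times H)$-universe, so $\Gamma$ is a compact Lie subgroup that is not universal. This is exactly why the argument needs the $\Cc^L$-projective cofibrations of \emph{all} compact Lie subgroups rather than only the universal ones, and is the structural reason for basing the global model structure on $\Cc^L$.
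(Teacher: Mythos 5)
Your proof is correct and follows essentially the same route as the paper's: reduction to the generating cofibrations, the identification $\Lc/G\boxtimes_\Lc\Lc/H\iso\Lc/\Gamma$ for a compact Lie subgroup $\Gamma\iso G\times H$ (which, as you rightly note, is typically not universal --- the reason the model structure is based on $\Cc^L$), and the acyclicity argument via Theorem~\ref{thm:box to times}, cobase change along an h-cofibration (Proposition~\ref{prop:global equiv basics}\,(vi)), and two-out-of-three. The only cosmetic differences are that the paper invokes Hopkins' lemma \cite[I Lemma 5.4]{EKMM} where you use the split-coequalizer unit isomorphism $M\times_R R\iso M$ (equivalent here, since only the free case $\Vc=\Uc=\mR^\infty$ is needed), and that the paper places the h-cofibration hypothesis on $A\boxtimes_\Lc j$ rather than on $i\boxtimes_\Lc X$ in applying part~(vi).
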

\begin{proof}
The key observation is the following. We let $G$ and~$K$ be compact Lie
groups and $\Vc$ respectively~$\Uc$ faithful orthogonal representations of~$G$
respectively~$K$ of countably infinite dimension.
Then the map
\begin{align*}
  \bL(\Vc,\mR^\infty) \boxtimes_\Lc \bL(\Uc,\mR^\infty) \ &\to \ 
\bL(\Vc\oplus\Uc,\mR^\infty) \ , \quad
[\varphi;\,\psi, \kappa ]\ \longmapsto \ 
\varphi\circ(\psi\oplus\kappa)
\end{align*}
is an isomorphism of $\Lc$-spaces by~\cite[I Lemma 5.4]{EKMM}
(sometimes referred to as `Hopkins' lemma').
The map is also $(G\times K)$-equivariant, and $\boxtimes_\Lc$
preserves colimits in both variables. So the map descends to
an isomorphism of $\Lc$-spaces
\begin{align*}
  \bL(\Vc,\mR^\infty)/G \boxtimes_\Lc \bL(\Uc,\mR^\infty)/K \ &\to \ 
\bL(\Vc\oplus\Uc,\mR^\infty)/(G\times K)\\
[\varphi;\,\psi G, \kappa K]\quad &\longmapsto \ 
(\varphi\circ(\psi\oplus\kappa))(G\times K)\ .
\end{align*}
On the other hand, $\Vc\oplus\Uc$ is a faithful orthogonal representation
for the group~$G\times K$.
We choose a linear isometry $\Vc\oplus\Uc\iso \mR^\infty$.
Conjugation with this isometry turns the action of $G\times K$
on  $\Vc\oplus\Uc$ into a continuous group monomorphism
$G\times K\to\Lc$; the image is thus a compact Lie subgroup $H\subset \Lc$
isomorphic to $G\times K$, and the operadic product
$ \Lc/G \boxtimes_\Lc \Lc/K$ is isomorphic to $\Lc/H$.

Since the operadic product preserves colimits in both variables,
it suffices to show the pushout product property for $\Cc^L$-cofibrations
in the generating set $I=I_{\Cc^L}$ for the global model structure,
compare \cite[Cor.\,4.2.5]{hovey-book}.
This set consists of the morphisms
\[ \Lc/G\times i_k \ : \ \Lc/G\times \partial D^k \ \to \ 
\Lc/G\times  D^k  \]
for all $k\geq 0$, where $G$ runs through all compact Lie subgroups of $\Lc$.
Since $i_k\Box i_m$ is isomorphic to $i_{k+m}$, the pushout product
\[ (\Lc/G\times i_k)\boxtimes_\Lc(\Lc/K\times i_m) \]
of two generating cofibrations is isomorphic
to $\Lc/H\times i_{k+m}$ for a compact Lie subgroup $H$,
and hence also a cofibration.

It remains to show that for every pair of $\Cc^L$-cofibrations $i:A\to B$ and $j:X\to Y$
such that $j$ is also a global equivalence, the pushout product morphism
is again a global equivalence.
The morphism $A\boxtimes_\Lc j :A\boxtimes_\Lc X \to A\boxtimes_\Lc Y$ 
is a global equivalence by Theorem~\ref{thm:box to times}.
Since $j$ is a $\Cc^L$-cofibration, it is also an h-cofibration
of $\Lc$-spaces (by Theorem~\ref{thm:global L-spaces}),
and hence  $A\boxtimes_\Lc j$ is again an h-cofibration. The cobase change 
\[ 
(B\boxtimes_\Lc X) \ \to \ ( A\boxtimes_\Lc Y) \cup_{A\boxtimes_\Lc X} (B\boxtimes_\Lc X) 
 \]
of $A\boxtimes_\Lc j$ is then a global equivalence by Proposition \ref{prop:global equiv basics}~(vi).  
The composite of this cobase change with the pushout product
morphism $i\Box j$ is the morphism $B\boxtimes_\Lc j$, and hence
a global equivalence by Theorem~\ref{thm:box to times}.
Hence $i\Box j$ is global equivalence.
\end{proof}

\begin{rk}[Global model structures for $\ast$-modules]
Since the unit transformation of the operadic product of $\Lc$-spaces
is not always an isomorphism, certain~$\Lc$-spaces are distinguished.
A {\em $\ast$-module} is an $\Lc$-space~$X$ for which the unit morphism 
\[  X\boxtimes_\Lc \ast \ \to\ X \ ,\qquad
[\varphi;x,\ast]\ \longmapsto \  (\varphi i)\cdot x \]
is an isomorphism, where $i:\mR^\infty\to\mR^\infty\oplus\mR^\infty$ 
is the embedding as the first direct summand.
The category of $\ast$-modules is particularly relevant
because on it, the one-point $\Lc$-space is a unit object for 
$\boxtimes_\Lc$ (by definition); so when restricted to 
the full subcategory of $\ast$-modules,  
the operadic product $\boxtimes_\Lc$ is symmetric monoidal.
One can show that for every orthogonal space~$A$,
the $\Lc$-space $A(\mR^\infty)$ 
(defined in Construction \ref{con:L acts in general} below) is a $\ast$-module, 
so these come in rich supply.
On the other hand, $\Lc$-spaces of the form~$\Lc/G$
for a compact Lie subgroup~$G$ of~$\Lc$ are {\em not}
$\ast$-modules.

The category of $\ast$-modules admits a (non-equivariant)
model structure with weak equivalences defined after forgetting
the $\Lc$-action, cf.~\cite[Thm.\,4.16]{blumberg-cohen-schlichtkrull}.
In~\cite{boehme}, B{\"o}hme constructs a monoidal model
structure on the category of $\ast$-modules that has the global equivalences
of ambient $\Lc$-spaces as its weak equivalences;
he also shows that with these global model structures, $\Lc$-spaces and $\ast$-modules
are Quillen equivalent, and that the global model structure on
$\ast$-modules lifts to associative monoids (with respect to $\boxtimes_\Lc$).
This effectively provides a global model structure on the category
of $A_\infty$-monoids, i.e., algebras over the linear isometries operad
(considered as a non-symmetric operad). 
It remains to be seen to what extent the global model structure
lifts to commutative monoids with respect to $\boxtimes_\Lc$ 
(i.e., to $E_\infty$-monoids).
\end{rk}

\section{\texorpdfstring{$\Lc$}{L}-spaces and orbispaces}
\label{sec:L verses orbispaces}

In this section we give rigorous meaning to the
slogan that global homotopy theory of $\Lc$-spaces
is the homotopy theory of `orbispaces with compact Lie group isotropy'.
To this end we formulate a version of Elmendorf's 
theorem~\cite{elmendorf-orbit} for the homotopy theory of $\Lc$-spaces,
saying that an $\Lc$-equivariant global homotopy type can be reassembled from
fixed point data.

\begin{defn}[Global orbit category]\label{def:O_gl}
The {\em global orbit category} $\bO_{\gl}$ is the topological category 
whose objects are all universal subgroups of the monoid $\Lc$, 
and the space of morphisms from~$K$ to $G$ is
\[ \bO_{\gl}(K,G)\ = \ \map^\Lc( \Lc/K,\Lc/G) \ , \]
the space of $\Lc$-equivariant maps from $\Lc/K$ to $\Lc/G$.
Composition in $\bO_{\gl}$ is composition of morphisms of $\Lc$-spaces.
\end{defn}

Since $\Lc/K$ represents the functor of taking $K$-fixed points,
the morphism space $\bO_{\gl}(K,G)$ is homeomorphic to
\[ (\Lc/G)^K \ = \ (\bL(\mR^\infty_G,\mR^\infty_K)/G)^K\ . \]

\begin{rk}
The global orbit category refines the category
$\Rep$ of compact Lie groups and conjugacy classes of continuous homomorphisms
in the following sense.
For all universal subgroups $G$ and $K$, the components
$\pi_0( \bO_{\gl}(K,G))$ biject functorially with $\Rep(K,G)$.
Indeed, by Proposition~\ref{prop:EKG infinite} 
the space $\bL(\mR^\infty_G,\mR^\infty_K)$ 
is $(K\times G)$-equivariantly homotopy equivalent to 
$\bL(V,\mR^\infty_K)$ for a finite-dimensional faithful $G$-representation $V$.
That latter space is a universal space for the family of graph subgroups,
compare \cite[Prop.\,1.1.26]{schwede-global}.
So the space $\bO_{\gl}(K,G)=(\Lc/G)^K$ is a 
disjoint union, indexed by conjugacy classes
of continuous group homomorphisms $\alpha:K\to G$,
of classifying spaces of the centralizer of the image of $\alpha$,
see for example
\cite[Prop.\,5]{lewis-may-mcclure-classifying} 
or~\cite[Prop.\,1.5.12 (i)]{schwede-global}.
In particular, the path component category $\pi_0(\bO_{\gl})$ of
the global orbit category is equivalent to the category $\Rep$
of compact Lie groups and conjugacy classes of continuous homomorphisms.
The preferred bijection
\[ \Rep(K,G)\ \to \ \pi_0( \bO_{\gl}(K,G))  \]
sends the conjugacy class of~$\alpha:K\to G$ to the $G$-orbit of any
$K$-equivariant linear isometric embedding of the $K$-universe $\alpha^*(\mR^\infty_G)$
into the complete $K$-universe~$\mR^\infty_K$.
\end{rk}

\begin{defn}
An {\em orbispace} is a continuous functor $Y:\bO_{\gl}^{\op}\to\bT$
from the opposite of the global orbit category to the category of spaces.
We denote the category of orbispaces and natural transformations by
$\orbispc$.
\end{defn}

For every small topological category $J$ with discrete object set
the category $J\bT$ of continuous functors from $J$ to spaces
has a `projective' model structure~\cite[Thm.\,5.4]{piacenza}
in which the weak equivalence and fibrations are those 
natural transformations that are weak equivalences respectively Serre fibrations
at every object. In the special case~$J=\bO_{\gl}^{\op}$, this provides
a projective (or objectwise) model structure on the category of orbispaces.

\begin{construction}
We introduce a {\em fixed point functor}
\[ \Phi \ : \  \Lc\bT \ \to \ \orbispc \]
from the category of $\Lc$-spaces to  the category of orbispaces 
that will turn out to be a right Quillen equivalence with respect to the
global model structure on the left hand side.
Given an $\Lc$-space $Y$ we define the value of the orbispace $\Phi(Y)$
at a universal subgroup $K$ as 
\[ \Phi(Y)(K)\ = \  \map^\Lc(\Lc/K,Y)\  ,\]
the space of $\Lc$-equivariant maps.
The $\bO_{\gl}$-functoriality is by composition of morphisms of $\Lc$-spaces.
In other words, $\Phi(Y)$ is the contravariant hom-functor represented by $Y$,
restricted to the global orbit category.
In particular,
\[ \Phi(\Lc/G)\ = \ \bO_{\gl}(-,G)\ , \]
i.e., the fixed points of the orbit $\Lc$-space $\Lc/G$ 
form the orbispace represented by~$G$.
Since $\Lc/K$ represents the functor of taking $K$-fixed points,
the map
\[ \Phi(Y)(K)\ \to \ Y^K \ , \quad f \ \longmapsto \ f(K) \]
is a homeomorphism.
\end{construction}

The fixed point functor~$\Phi$ has a left adjoint 
\[ \Lambda\ : \ \orbispc \ \to \ \Lc\bT \ ,\]
with value at an orbispace~$X$ given by the coend 
\[  \Lambda (X) \ = \ \int^{G\in \bO_{\gl}}\, \Lc/G \times X(G) \ , \]
i.e., a coequalizer, in the category of $\Lc$-spaces, of the two morphisms
\[  \xymatrix@C=10mm{  
\coprod_{K,G} \Lc/K \times \bO_{\gl}(K,G)\times X(G)
 \  \ar@<.4ex>[r] 
\ar@<-.4ex>[r] & \ \coprod_{G} \Lc/G\times X(G)  } \ .  \]
All we will need to know about the left adjoint is that for 
every universal subgroup~$G$ of~$\Lc$,
it takes  the representable orbispace $\bO_{\gl}(-,G)=\Phi(\Lc/G)$ to $\Lc/G$. 
Indeed, the counit $\epsilon_{\Lc/G}:\Lambda(\Phi(\Lc/G))\to \Lc/G$ induces a bijection
of morphism sets
\begin{align*}
  \Lc\bT(\Lambda(\Phi(\Lc/G)) , X)\ &\iso \  
  \orbispc(\Phi(\Lc/G), \Phi(X) )\\ &= \
  \orbispc(\bO_{\gl}(-,G), \Phi(X) )\ \iso \ \Phi(X)(G)\ = \    \Lc\bT(\Lc/G, X)\ .
\end{align*}
So the counit $\epsilon_{\Lc/G}:\Lambda(\Phi(\Lc/G))\to \Lc/G$ is an isomorphism of $\Lc$-spaces.

\begin{theorem} \label{thm:L and orbispace}
The adjoint functor pair
\[\xymatrix@C=12mm{ 
\Lambda \ : \ \orbispc  \quad \ar@<.4ex>[r] & 
\quad \Lc\bT\ : \ \Phi \ar@<.4ex>[l]  }\]  
is a Quillen equivalence between the category
of $\Lc$-spaces with the global model structure
and the category of orbispaces with the projective model structure.
Moreover, for every cofibrant orbispace $X$ the adjunction unit
$X\to\Phi(\Lambda X)$ is an isomorphism.
\end{theorem}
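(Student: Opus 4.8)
The plan is to proceed in three stages: first verify that $(\Lambda,\Phi)$ is a Quillen pair, then establish the second assertion that the unit $X\to\Phi(\Lambda X)$ is an isomorphism on every cofibrant orbispace, and finally combine these with one elementary observation to deduce the Quillen equivalence. The observation is that, directly from the definitions, $\Phi$ both preserves and reflects weak equivalences: a map $f$ of $\Lc$-spaces has $\Phi(f)$ a projective equivalence of orbispaces exactly when $f^K\colon X^K\to Y^K$ is a weak equivalence for every universal subgroup $K$, which is precisely the definition of a global equivalence.

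For the Quillen pair, I would check $\Lambda$ on generators. The projective model structure on $\orbispc$ is cofibrantly generated by the morphisms $\bO_{\gl}(-,G)\times i_k$ and $\bO_{\gl}(-,G)\times j_k$, where $G$ ranges over the universal subgroups, $i_k\colon\partial D^k\to D^k$, and $j_k\colon D^k\times\{0\}\to D^k\times[0,1]$. Since $\Lambda$ is a left adjoint it preserves products with a fixed space, and because $\bO_{\gl}(-,G)=\Phi(\Lc/G)$ and the counit $\Lambda(\Phi(\Lc/G))\to\Lc/G$ is an isomorphism, one gets $\Lambda(\bO_{\gl}(-,G)\times i_k)\iso\Lc/G\times i_k$ and similarly for $j_k$. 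These are exactly the elements of the generating set $I=I_{\Cc^L}$ and of the set $J=J_{\Cc^L}$: the former are $\Cc^L$-cofibrations, and the latter are $\Cc^L$-acyclic cofibrations and hence global acyclic cofibrations, since every $\Cc^L$-equivalence is a global equivalence. As $\Lambda$ preserves colimits, it carries all cofibrations and acyclic cofibrations of $\orbispc$ into cofibrations and acyclic cofibrations of $\Lc\bT$, so $(\Lambda,\Phi)$ is a Quillen pair.

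The core of the proof is the unit statement. On each representable $\bO_{\gl}(-,G)=\Phi(\Lc/G)$ the unit is an isomorphism, by the triangle identity $\Phi(\epsilon_{\Lc/G})\circ\eta_{\Phi(\Lc/G)}=\mathrm{id}$ and the fact that $\epsilon_{\Lc/G}$ is an isomorphism; since both $\Lambda$ and $\Phi$ commute with products by a fixed space, the unit is then an isomorphism on $\bO_{\gl}(-,G)\times D^k$ and on $\bO_{\gl}(-,G)\times\partial D^k$. To reach a general cofibrant $X$, a retract of a cell complex assembled from the generating cofibrations, I would induct over the cellular filtration, starting from the empty orbispace where the unit is trivially an isomorphism. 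The functor $\Lambda$ preserves the defining pushouts and sequential colimits as a left adjoint; the decisive point is that $\Phi$ preserves them too. This holds because the maps $\Lambda X_n\to\Lambda X_{n+1}$ are cobase changes of $\Cc^L$-cofibrations, hence closed embeddings by Theorem~\ref{thm:global L-spaces}, and fixed points commute with pushouts and sequential colimits along closed embeddings by \cite[Prop.\,B.1]{schwede-global}. Comparing the two pushout (respectively colimit) diagrams through the unit, already an isomorphism on the relevant corners by the inductive hypothesis, shows it is an isomorphism on $X_{n+1}$ and on the colimit; a retract of an isomorphism is an isomorphism. I expect this commutation of the fixed-point functor with cellular colimits to be the main obstacle, since it is exactly where the point-set input, namely the closed-embedding property of cofibrations, is indispensable.

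To conclude, I would invoke the criterion that a Quillen pair is a Quillen equivalence as soon as the total derived unit is a weak equivalence on cofibrant objects and the right adjoint reflects weak equivalences between fibrant objects. At a cofibrant $X$ the derived unit factors as the unit $X\to\Phi(\Lambda X)$, an isomorphism by the previous stage, followed by $\Phi$ applied to a fibrant replacement $\Lambda X\to R(\Lambda X)$; the latter is a global equivalence, which $\Phi$ sends to a projective equivalence, so the derived unit is a weak equivalence. The reflection property is immediate from the elementary observation, since $\Phi$ detects weak equivalences between arbitrary objects. Hence $(\Lambda,\Phi)$ is a Quillen equivalence, which together with the unit statement gives the theorem.
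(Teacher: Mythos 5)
Your proposal is correct and takes essentially the same route as the paper: the heart in both cases is the cellular induction showing the unit $X\to\Phi(\Lambda X)$ is an isomorphism on cofibrant orbispaces, using that $\Lambda$ preserves the defining colimits and that fixed points commute with pushouts and sequential colimits along closed embeddings, after which the Quillen equivalence follows from Hovey's criterion because $\Phi$ preserves and detects all weak equivalences via $\Phi(Y)(K)\cong Y^K$. The only cosmetic differences are that you verify the Quillen pair by applying $\Lambda$ to the generating (acyclic) cofibrations, where the paper checks directly that $\Phi$ sends global fibrations and global equivalences to projective ones, and that you make explicit the sequential-colimit and retract steps that the paper's proof leaves implicit.
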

\begin{proof}
Every fibration in the global model structure of~$\Lc$-spaces
in particular restricts to a Serre fibration on fixed points
of every universal subgroup, so the right adjoint
sends fibrations in the global model structure of~$\Lc$-spaces
to fibrations in the projective model structure of orbispaces.
The right adjoint also takes global equivalences of~$\Lc$-spaces
to objectwise weak equivalences of orbispaces, by the very definition
of `global equivalences'. So~$\Phi$ is a right Quillen functor.

We now show that for every cofibrant orbispace $X$ the adjunction unit
$X\to\Phi(\Lambda X)$ is an isomorphism.
We let $\Gc$ denote the class of orbispaces for which the
adjunction unit is an isomorphism. We show the following property:
For every index set $I$, every $I$-indexed family $H_i$ 
of universal subgroups of~$\Lc$,
all numbers $n_i\geq 0$ and every pushout square of orbispaces 
\begin{equation}\begin{aligned}\label{eq:initial_pushout}
\xymatrix{ 
\coprod_{i\in I} \bO_{\gl}(-,H_i)\times \partial D^{n_i}\ar[r]\ar[d] &
\coprod_{i\in I} \bO_{\gl}(-,H_i)\times D^{n_i}  \ar[d] \\
X \ar[r] & Y }
 \end{aligned}\end{equation}
such that $X$ belongs to $\Gc$, the orbispace $Y$ also belongs to $\Gc$.

As a left adjoint, $\Lambda$ preserves pushout and coproducts.
For every space $A$ the functor $-\times A$ is a left adjoint, so it commutes
with colimits and coends. So $\Lambda$ also commutes with products with spaces. 
Thus $\Lambda$ takes the original square to a pushout square of $\Lc$-spaces:
\[  \xymatrix{ 
\coprod_{i\in I}  \Lc/H_i\times \partial D^{n_i}\ar[r]\ar[d] &
\coprod_{i\in I}  \Lc/H_i\times D^{n_i}  \ar[d] \\
\Lambda X \ar[r] & \Lambda Y }  \]
The upper horizontal morphism in this square is a closed embedding.
For every universal subgroup~$G$ of~$\Lc$ the $G$-fixed point functor
commutes with disjoint unions, products with spaces and
pushouts along closed embeddings. So the square
\[ \xymatrix{ 
\coprod_{i\in I} (\Lc/H_i)^G \times \partial D^{n_i}\ar[r]\ar[d] &
\coprod_{i\in I}  (\Lc/H_i)^G \times D^{n_i}  \ar[d] \\
(\Lambda X)^G \ar[r] & ( \Lambda Y)^G }\]
is a pushout in the category of compactly generated spaces.
Colimits and products of orbi\-spaces with spaces are formed objectwise, 
so letting $G$ run through all universal subgroups shows that the square
\[ \xymatrix{ 
\coprod_{i\in I} \Phi(\Lc/H_i) \times \partial D^{n_i}\ar[r]\ar[d] &
\coprod_{i\in I}  \Phi(\Lc/H_i) \times D^{n_i}  \ar[d] \\
\Phi(\Lambda X) \ar[r] &  \Phi(\Lambda Y) }\]
is a pushout in the category of orbispaces.
The adjunction units induce compatible maps from the
original pushout square~\eqref{eq:initial_pushout} to this last square.
Since $\Phi(\Lc/H_i)=\bO_{\gl}(-,H_i)$ and the unit $\eta_X:X\to \Phi(\Lambda X)$
is an isomorphism, the unit $\eta_Y:Y\to \Phi(\Lambda Y)$
is also an isomorphism.

The right adjoint~$\Phi$ preserves and detects all weak equivalences,
and for every cofibrant orbispace~$X$,
the adjunction unit $X\to\Phi(\Lambda X)$ is an isomorphism,
hence in particular a weak equivalence of orbispaces.
So the pair~$(\Lambda,\Phi)$ is a Quillen equivalence,
for example by~\cite[Cor.\,1.3.6]{hovey-book}. 
\end{proof}

\begin{rk}[Universal projective versus global model structure]\label{rk:global vs u-projective}
The Quillen equivalence of Theorem~\ref{thm:L and orbispace}  
can be factored as a composite of two composable Quillen equivalences as follows:
\[\xymatrix@C=10mm{ 
\orbispc  \quad \ar@<.4ex>[r]^-{\Lambda} & 
\quad (\Lc\bT)_{\text{u-proj}}\quad \ar@<.4ex>[l]^-{\Phi}  \ar@<.4ex>[r]^-{\Id} & 
\quad (\Lc\bT)_{\gl} \ar@<.4ex>[l]^-{\Id}  
}\]  
The middle model category is the category of $\Lc$-spaces, equipped with the
{\em universal projective} model structure, i.e.,
the projective model structure for the collection of universal subgroups. 
This intermediate model structure has the same weak equivalences
as the global model structure (namely the global equivalences),
but it has fewer cofibrations and more fibrations. 
The identity functor is thus a left Quillen equivalence from the
universal projective model structure to the global model structure
on the category of~$\Lc$-spaces.

Our reasons for emphasizing the global model structure 
(as opposed to the universal projective model structure)
for~$\Lc$-spaces are twofold. On the one hand, the global model
structure can be more easily compared to the global homotopy theory
of orthogonal spaces, as a certain adjoint functor 
already used by Lind in~\cite{lind-diagram} is a Quillen pair
for the global model structure on~$\Lc$-spaces
(but {\em not} for the universal projective model structure),
compare Theorem~\ref{thm:orthogonal vs L} below.
Another reason is that the global model structure of~$\Lc$-spaces
is monoidal with respect to the operadic $\boxtimes_\Lc$-product,
in the sense of Proposition~\ref{prop:global is monoidal} above.
The universal projective model structure, in contrast,
does {\em not} satisfy the pushout product property.
Indeed, for every universal subgroup~$G$ of~$\Lc$,
the $\Lc$-space $\Lc/G$ is cofibrant in the universal projective model structure.
If~$K$ is another universal subgroup of~$\Lc$,
then we showed in the proof of Proposition~\ref{prop:global is monoidal} 
that the operadic product
  \[ \Lc/G \boxtimes_\Lc \Lc/K \]
is isomorphic to $\Lc/H$ where $H$ is a compact Lie subgroup of~$\Lc$
isomorphic to $G\times K$. However, under the isomorphism
$H\iso G\times K$, the tautological action of~$H$ on~$\mR^\infty$
becomes the direct sum $\Uc_G\oplus\Uc_K$ 
of a complete $G$-universe and a complete $K$-universe.
While $\Uc_G\oplus\Uc_K$ is a universe for~$G\times K$, it is typically
{\em not} complete. So the operadic product $\Lc/G \boxtimes_\Lc \Lc/K$
is cofibrant in the global model structure, but typically {\em not}
in the universal projective model structure.
\end{rk}

\section{\texorpdfstring{$\Lc$}{L}-spaces and orthogonal spaces}
\label{sec:L vs orthogonal}

The aim of this section is to compare the global homotopy theory
of~$\Lc$-spaces to the global homotopy theory of orthogonal spaces
as developed by the author in~\cite{schwede-global}:
we will show in Theorem~\ref{thm:orthogonal vs L}
that the global model structure on the category of $\Lc$-spaces
is Quillen equivalent to 
the positive global model structure on the category of orthogonal spaces,
established in~\cite[Prop.\,1.2.23]{schwede-global}. 

We denote by $\bL$ the category with objects the finite-dimensional 
inner product spaces and morphisms the linear isometric embeddings.
If~$V$ and~$W$ are two finite-dimensional inner product spaces,
then the function space topology on $\bL(V,W)$ agrees with
the topology as the Stiefel manifold of $\dim(V)$-frames in~$W$,
by Proposition~\ref{prop:properties of L}~(i).
Moreover, composition of linear isometric embeddings is continuous, so
$\bL$ is then a topological category.
Every inner product space $V$ is isometrically isomorphic
to~$\mR^n$ with the standard scalar product, for $n$ the dimension of~$V$,
so the category~$\bL$ has a small skeleton.

\begin{defn} An {\em orthogonal space}
is a continuous functor $Y:\bL\to\bT$ to the category of spaces.
A morphism of orthogonal spaces is a natural transformation.
We denote by $\spc$ the category of orthogonal spaces.
\end{defn}

The category $\bL$ (or its extension that
also contains countably infinite dimensional inner product spaces)
is denoted $\mathscr I$ by Boardman and Vogt~\cite{boardman-vogt-homotopy everything},
and this notation is also used in~\cite{may-quinn-ray};
other sources~\cite{lind-diagram} use the symbol $\mathcal I$.
Accordingly, orthogonal spaces are sometimes referred to as $\mathscr I$-functors,
$\mathscr I$-spaces or $\mathcal I$-spaces.

\begin{construction}[Evaluation at $\mR^\infty$]\label{con:L acts in general}
We let~$Y$ be an orthogonal space. We extend the action maps
\begin{equation}  \label{eq:action_of_spc}
  \bL(V,W)\times Y(V)\ \to Y(W)  
\end{equation}
which are part of the structure of an orthogonal space to the 
situation where~$V$ and~$W$ are allowed to be of countably infinite dimension.
If~$\Wc$ is an inner product space of countably 
infinite dimension, then we let~$s(\Wc)$ denote 
the poset of finite-dimensional subspaces of~$\Wc$, ordered under inclusion.
We define
\[ Y(\Wc) \ = \ \colim_{W\in s(\Wc)}\, Y(W)\ ,\]
the colimit in the category~$\bT$ of compactly generated spaces.
If~$V$ is a finite-dimensional inner product space, 
we define the action map
\[ \bL(V,\Wc)\times Y(V) \ \to \ Y(\Wc)\]
from the action maps~\eqref{eq:action_of_spc}
of the functor~$Y$ by passing to colimits over the poset~$s(\Wc)$;
this is legitimate because $-\times Y(V)$ preserves colimits
and  $\bL(V,\Wc)$ is the colimit of the spaces $\bL(V,W)$
over the poset $s(\Wc)$,
by Proposition \ref{prop:properties of L}~(ii).

If~$\Vc$ is also of countably infinite dimension, then
$\bL(\Vc,\Wc)\times Y(\Vc)$
is the colimit of $\bL(\Vc,\Wc)\times Y(V)$ for~$V\in s(\Vc)$
because $\bL(\Vc,\Wc)\times -$ preserves colimits.
So the compatible maps
\[ \bL(\Vc,\Wc) \times Y(V) \ \xra{\rho^\Vc_V \times\Id} \ 
\bL(V,\Wc) \times Y(V) \ \xra{\text{act}} \ Y(\Wc)\]
assemble into a continuous action map.

Every orthogonal space~$Y$ gives rise to an~$\Lc$-space by evaluation at~$\mR^\infty$.
Indeed, for~$\Vc=\Wc=\mR^\infty$, the above construction
precisely says that the action maps make~$Y(\mR^\infty)$ into an~$\Lc$-space.  
If $V$ is a finite-dimensional inner product space, then
Proposition ~\ref{prop:properties of L}~(ii) shows that
$\bL(V,\mR^\infty)$ carries the weak topology with respect to
the closed subspaces $\bL(V,W)$ for $W\in s(\mR^\infty)$.
So 
\[ \bL(V,-)(\mR^\infty)\ = \ \bL(V,\mR^\infty) \ .\]
If $\Vc$ is an inner product space of countably infinite dimension,
then the space $\bL(\Vc,\mR^\infty)$
becomes an $\Lc$-space by postcomposition, but it  
does {\em not} arise from an orthogonal space by evaluation at~$\mR^\infty$. 
\end{construction}

We recall from~\cite[Def.\,1.1.2]{schwede-global}
the notion of {\em global equivalence} of orthogonal spaces.
We let~$G$ be a compact Lie group. 
For every orthogonal space~$Y$ and every 
finite-dimensional orthogonal $G$-representation $V$,
the value $Y(V)$ inherits a $G$-action from the
$G$-action on $V$ and the functoriality of $Y$. 
For a $G$-equivariant linear isometric embedding $\varphi:V\to W$
the induced map $Y(\varphi):Y(V)\to Y(W)$ is $G$-equivariant.

\begin{defn}
A morphism $f:X\to Y$
of orthogonal spaces is a {\em global equivalence}
if the following condition holds: for every compact Lie group $G$,
every $G$-representation~$V$, every $k\geq 0$ 
and all continuous maps $\alpha:\partial D^k\to X(V)^G$
and $\beta:D^k\to Y(V)^G$ such that $\beta|_{\partial D^k}=f(V)^G\circ\alpha$,
there is a $G$-representation~$W$, a $G$-equivariant linear
isometric embedding $\varphi:V\to W$ and a continuous map $\lambda:D^k\to X(W)^G$
such that $\lambda|_{\partial D^k}=X(\varphi)^G\circ \alpha$ and
such that $f(W)^G\circ \lambda$
is homotopic, relative to $\partial D^k$, to $Y(\varphi)^G\circ \beta$.
\end{defn}

In other words, for every commutative square on the left
\[ \xymatrix@C=13mm{
\partial D^k\ar[r]^-\alpha \ar[d]_{\text{incl}} & X(V)^G \ar[d]^{f(V)^G} &
\partial D^k\ar[r]^-\alpha \ar[d]_{\text{incl}} & X(V)^G \ar[r]^-{X(\varphi)^G} & 
X(W)^G \ar[d]^{f(W)^G} 
\\
D^k\ar[r]_-\beta & Y(V)^G & 
D^k\ar[r]_-\beta \ar@{-->}[urr]^-(.4)\lambda & Y(V)^G \ar[r]_-{Y(\varphi)^G} & Y(W)^G  }\]
there exists the lift $\lambda$ on the right hand side that makes the upper left
triangle commute on the nose, and the lower right triangle up to homotopy
relative to~$\partial D^k$.

An orthogonal space is {\em closed} if all its structure maps are closed
embeddings. The following is a reformulation of \cite[Prop.\,1.1.17]{schwede-global}.

\begin{prop}
 A morphism $f:X\to Y$ between closed orthogonal spaces is a global equivalence 
if and only if 
$f(\mR^\infty):X(\mR^\infty)\to Y(\mR^\infty)$ is a global equivalence of $\Lc$-spaces.
\end{prop}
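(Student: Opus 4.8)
The plan is to recognize that both notions are governed by the very same lifting criterion, and that evaluating a \emph{closed} orthogonal space at $\mR^\infty$ converts its colimit structure into precisely the fixed-point data tested by the $\Lc$-space notion. The first thing I would record is the key identification. If $G$ is a universal subgroup of $\Lc$, then the $G$-action on the $\Lc$-space $X(\mR^\infty)$ obtained by restricting the $\Lc$-action along $G\subset\Lc$ is exactly the $G$-action on the evaluation $X(\mR^\infty_G)$ of the orthogonal space $X$ at its tautological $G$-representation, and $\mR^\infty_G$ is a complete $G$-universe precisely because $G$ is universal. Hence $X(\mR^\infty)^G = X(\mR^\infty_G)^G$, compatibly with $f$, and similarly for $Y$. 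Since $X$ is closed, all its structure maps are closed embeddings, so $X(\mR^\infty_G)=\colim_{V} X(V)$ over $s(\mR^\infty_G)$ is a colimit along closed embeddings; taking $G$-fixed points commutes with such colimits by \cite[Prop.\,B.1 (ii)]{schwede-global}, and the $G$-invariant finite-dimensional subspaces are cofinal (every finite-dimensional subspace lies in a finite-dimensional $G$-subrepresentation). Therefore $X(\mR^\infty)^G=\colim_V X(V)^G$, the colimit over finite-dimensional $G$-subrepresentations $V\subseteq\mR^\infty_G$, and likewise for $Y$.

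For the forward direction, assume $f$ is a global equivalence of orthogonal spaces and fix a universal subgroup $G$; I would show $f(\mR^\infty)^G$ is a weak homotopy equivalence via the standard criterion that all lifting problems against the sphere inclusions $\partial D^k\hookrightarrow D^k$ can be solved up to homotopy rel boundary. Given such a problem $(a,b)$ with $a:\partial D^k\to X(\mR^\infty)^G$ and $b:D^k\to Y(\mR^\infty)^G$, compactness together with the colimit description lets me factor everything through a single finite-dimensional $G$-subrepresentation $V$ as maps $\alpha,\beta$ with $\beta|_{\partial D^k}=f(V)^G\circ\alpha$ (enlarging $V$ to force the compatibility on the nose). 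The global-equivalence property of $f$ then yields a $G$-representation $W$, a $G$-embedding $\varphi:V\to W$, and $\lambda:D^k\to X(W)^G$ solving the lift at the level of $W$. Using completeness of $\mR^\infty_G$ I extend the inclusion $V\hookrightarrow\mR^\infty_G$ along $\varphi$ to a $G$-embedding $W\hookrightarrow\mR^\infty_G$; pushing $\lambda$ forward and comparing through the given homotopy rel $\partial D^k$ solves the original problem in $X(\mR^\infty)^G$.

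Conversely, assume $f(\mR^\infty)$ is a global equivalence of $\Lc$-spaces. Given a compact Lie group $G$, a representation $V$ and a lifting problem $(\alpha,\beta)$ as in the definition, I would first realize $G$ as a universal subgroup by Proposition~\ref{prop-uniqueness of universal subgroups}, choosing the complete $G$-universe structure so that $V$ embeds $G$-equivariantly, and fix such an embedding $V\hookrightarrow\mR^\infty_G$. Composing $\alpha,\beta$ into the colimit gives a lifting problem for the weak equivalence $f(\mR^\infty)^G$, which therefore has a solution $\ell:D^k\to X(\mR^\infty)^G$ with $\ell|_{\partial D^k}$ the image of $\alpha$ and $f(\mR^\infty)^G\circ\ell$ homotopic to the image of $\beta$ rel $\partial D^k$. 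Compactness of $D^k$ and of $D^k\times[0,1]$ lets me factor $\ell$ and the homotopy through a single finite stage $W\supseteq V$; after enlarging $W$ so the boundary conditions match strictly, the factored data are exactly the $W$, $\varphi=(V\hookrightarrow W)$, $\lambda$, and homotopy demanded by the orthogonal-space definition.

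In fact the whole argument is the content of \cite[Prop.\,1.1.17]{schwede-global}; the only genuinely new inputs are the identification $X(\mR^\infty)^G=X(\mR^\infty_G)^G$ for universal $G$ and Proposition~\ref{prop-uniqueness of universal subgroups}, which together guarantee that the families of test groups on the two sides coincide. The step I expect to be the real obstacle is the bookkeeping of \emph{finite stages}: ensuring that, after factoring compact maps through finite-dimensional $G$-subrepresentations, one can enlarge the subrepresentation so that all required commutativities (the boundary compatibility $\beta|_{\partial D^k}=f(V)^G\circ\alpha$ and the relation $\lambda|_{\partial D^k}=X(\varphi)^G\circ\alpha$) hold strictly rather than merely after passage to the colimit. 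This is exactly where closedness of $X$ and $Y$ is indispensable, since it makes fixed points commute with the defining colimits and makes the finite stages into honest closed subspaces.
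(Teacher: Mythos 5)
Your proof is correct and takes essentially the route the paper intends: the paper gives no independent argument but simply notes the statement is a reformulation of \cite[Prop.\,1.1.17]{schwede-global}, and your identification $X(\mR^\infty)^G=X(\mR^\infty_G)^G$ for universal subgroups $G$ (via Proposition~\ref{prop-uniqueness of universal subgroups}, matching universal subgroups with pairs of a compact Lie group and a complete universe) is exactly that reformulation. The remaining finite-stage compactness bookkeeping, legitimized by closedness making fixed points commute with the colimits along closed embeddings and making each stage embed injectively into the colimit, is precisely the content of the cited proof, so nothing needs to be added.
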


Proposition~1.2.23 of~\cite{schwede-global}
establishes the positive global model structure on the category of orthogonal spaces
in which the weak equivalences are the global equivalences.
A morphism $f$ is a `positive global fibration'
(i.e., a fibration in the positive global model structure) 
if and only if for every compact Lie group $G$,
every faithful $G$-representation $V$ with $V\ne 0$ and 
every equivariant linear isometric embedding~$\varphi:V\to W$ 
the map~$f(V)^G:X(V)^G\to Y(V)^G$ is a Serre fibration and
the square of $G$-fixed point spaces
\[ 
        \xymatrix@C=18mm{ X(V)^G \ar[d]_{f(V)^G} \ar[r]^-{X(\varphi)^G} & 
           X(W)^G \ar[d]^{f(W)^G} \\
          Y(V)^G \ar[r]_-{Y(\varphi)^G} & Y(W)^G } \]
    is homotopy cartesian.  
This positive global model structure is proper, topological, compactly generated
and monoidal with respect to the convolution box product
of orthogonal spaces.

To compare the global model structures of $\Lc$-spaces and orthogonal spaces
we use the adjoint functor pair 
\[\xymatrix@C=10mm{ Q\tensor_{\bL}- \ : \ \spc \quad \ar@<.4ex>[r] & 
\quad \Lc\bT \ : \ \map^\Lc(Q,-) \ar@<.4ex>[l] }\]
introduced by Lind in~\cite[Sec.\,8]{lind-diagram};
Lind denotes the functor $Q\tensor_{\bL}-$ by $\mathbb Q$.
The adjoint pair arises from a continuous functor
\[ Q\ : \ \bL^{\op} \ \to \ \Lc\bT \ , \quad V \ \longmapsto \ 
\bL(V\tensor\mR^\infty,\mR^\infty)\ .\]
Here $\Lc$ acts on~$\bL(V\tensor\mR^\infty,\mR^\infty)$
by postcomposition. A linear isometric embedding~$\varphi:V\to W$
induces the morphism of $\Lc$-spaces
\[ Q(\varphi)\ = \ \bL(\varphi\tensor\mR^\infty,\mR^\infty) \ : \ 
\bL(W\tensor\mR^\infty,\mR^\infty)\ \to \ 
\bL(V\tensor\mR^\infty,\mR^\infty)\ ,\quad \psi \ \longmapsto \ 
\psi\circ(\varphi\tensor\mR^\infty)\ .\]
Since orthogonal spaces are defined as the continuous functors from~$\bL$,
and since the category of $\Lc$-spaces is tensored and cotensored over spaces,
any continuous functor from $\bL^{\op}$ induces an adjoint functor pair 
by an enriched end-coend construction.
Indeed, the value of the left adjoint on an orthogonal space~$Y$ is given by
\[ Q\tensor_{\bL} Y \ = \ \int^{V\in\bL} Y(V)\times \bL(V\tensor \mR^\infty,\mR^\infty) \ ,\]
the enriched coend of the continuous functor
\[ \bL\times \bL^{\op} \ \to \ \Lc\bT\ , \quad (V,W)\ \longmapsto \ 
Y(V)\times \bL(W\tensor \mR^\infty,\mR^\infty)
 \ .\]
The functor~$Q\tensor_{\bL}-$ has a right adjoint $\map^\Lc(Q,-)$
whose value at an $\Lc$-space~$Z$ is given by
\[ \map^\Lc(Q,Z)(V)\ = \ \map^\Lc(Q(V),Z)\ , \]
the mapping space of $\Lc$-equivariant maps from~$Q(V)$ to~$Z$.
The covariant functoriality in $V$ comes from the contravariant functoriality of~$Q$.
The coend of a contravariant functor with a representable covariant functor
returns the value at the representing object, i.e.,
\[  Q\tensor_{\bL} \bL(V,-)\ = \ Q(V) \ = \ \bL(V\tensor\mR^\infty,\mR^\infty)\ . \]
So the value on the semifree orthogonal space~$\bL_{G,V}=\bL(V,-)/G$ generated by
a $G$-represen\-ta\-tion $V$ comes out as
\[ Q\tensor_{\bL} \bL_{G,V}\ \iso \ ( Q\tensor_{\bL} \bL(V,-) ) / G\ \iso \ Q(V)/G \ = \ 
\bL(V\tensor\mR^\infty,\mR^\infty) / G \ .   \]
By \cite[Lemma~8.3]{lind-diagram}, 
the functor~$Q\tensor_\bL -$ from orthogonal spaces to $\Lc$-spaces is 
strong symmetric monoidal for the box product of orthogonal spaces
(compare \cite[Sec.\,1.3]{schwede-global}) 
and the operadic product $\boxtimes_\Lc$ of $\Lc$-spaces.

\begin{prop}\label{prop:map(Q,Y) and fix}
Let $Y$ be an injective $\Lc$-space, $G$ a compact Lie subgroup of $\Lc$
and $V$ a non-zero finite-dimensional faithful $G$-representation.
Then for every $G$-equivariant linear isometric embedding
$\kappa:V\tensor\mR^\infty\to\mR^\infty_G$ the $G$-equivariant evaluation map
\[ \map^\Lc(Q,Y)(V) \ = \ 
\map^\Lc(\bL(V\tensor\mR^\infty,\mR^\infty),Y) \ \to \ Y \ , \quad
h\mapsto h(\kappa) \]
is a $G$-weak equivalence.
\end{prop}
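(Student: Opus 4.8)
The plan is to recognize the evaluation map as $\map^\Lc(\kappa^*,Y)$ for a single morphism of $\Lc$-spaces, and then to show that this morphism is a global equivalence between cofibrant objects. Write $Q(V)=\bL(V\tensor\mR^\infty,\mR^\infty)$ and let $\kappa^*:\Lc\to Q(V)$ be precomposition with $\kappa$, that is $\phi\mapsto\phi\circ\kappa$. This is $\Lc$-equivariant for the postcomposition actions, and the $G$-equivariance of $\kappa$ (which reads $g\circ\kappa=\kappa\circ(g\tensor\mR^\infty)$) makes $\kappa^*$ equivariant for the $G$-action on $Q(V)$ by precomposition through $g\tensor\mR^\infty$. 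Under the standard identification $\map^\Lc(\Lc,Y)\iso Y$, the induced map $\map^\Lc(\kappa^*,Y)$ is exactly $h\mapsto h(\kappa)$, so it suffices to prove that $\map^\Lc(\kappa^*,Y)$ is a $G$-weak equivalence.

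Since a $G$-weak equivalence is tested on $H$-fixed points for all closed subgroups $H\subseteq G$, I would fix such an $H$ and analyze $\map^\Lc(Q(V),Y)^H$. An $\Lc$-map $Q(V)\to Y$ is $H$-fixed precisely when it is invariant under precomposition by $H$ through $g\tensor\mR^\infty$, hence factors uniquely through the orbit $\Lc$-space $Q(V)/H=\bL(V\tensor\mR^\infty,\mR^\infty)/H$. This identifies the map on $H$-fixed points with $\map^\Lc(\kappa^*/H,Y)$, where
\[ \kappa^*/H\ :\ \Lc/H \ = \ \bL(\mR^\infty_G,\mR^\infty)/H \ \to \ \bL(V\tensor\mR^\infty,\mR^\infty)/H \]
is induced by precomposition with $\kappa$; here $\mR^\infty_G$ is restricted to $H$ (still a complete $H$-universe) and $V\tensor\mR^\infty$ carries the faithful $H$-action $g\tensor\mR^\infty$. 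Both source and target have the form $\Lc/\Gamma$ for a compact Lie subgroup $\Gamma$ of $\Lc$ — for the target one transports the $H$-action along a linear isometry $\mR^\infty\iso V\tensor\mR^\infty$ to present it as $\Lc$ modulo a free action of a copy $\Gamma\iso H$ — so both are $\Cc^L$-cofibrant. Granting that $\kappa^*/H$ is a global equivalence, the conclusion then follows because $Y$ is injective, hence fibrant, and the global model structure is topological: a weak equivalence between cofibrant objects induces a weak equivalence on mapping spaces into a fibrant object.

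The heart of the matter, and the step I expect to be the main obstacle, is to show that $\kappa^*/H$ is in fact a \emph{strong} global equivalence, i.e.\ a $K$-homotopy equivalence of underlying $K$-spaces for every universal subgroup $K$. Restricting the $\Lc$-action to $K$, this amounts to comparing the $(K\times H)$-spaces $\bL(\mR^\infty_G,\mR^\infty_K)$ and $\bL(V\tensor\mR^\infty,\mR^\infty_K)$ through $\kappa^*$, with $K$ acting on the target and $H$ on the source. I would prove $\kappa^*$ is a $(K\times H)$-homotopy equivalence by recognizing both spaces as universal spaces for the family of graph subgroups of homomorphisms into $H$: for a graph subgroup attached to some $\alpha:L\to H$ with $L\subseteq K$, the fixed points form the space of $L$-equivariant linear isometric embeddings of the relevant source representation into $\mR^\infty_K$, which is contractible by the contractibility of equivariant isometry spaces into a complete universe (as in \cite[II Lemma 1.5]{lms}, already used in the proof of Theorem~\ref{thm:box to times}); for all remaining subgroups the fixed points are empty because both source representations are faithful. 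The essential subtlety is that this argument goes through even though $V\tensor\mR^\infty$ is \emph{not} a complete $H$-universe: contractibility of the fixed-point spaces only uses that the \emph{target} $\mR^\infty_K$ is a complete $K$-universe.

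Having identified both as universal spaces for the same family, any $(K\times H)$-equivariant map between them is automatically a $(K\times H)$-homotopy equivalence, so $\kappa^*$ is one. Finally, since $H$ acts freely by precomposition on both spaces (faithfulness once more), a $(K\times H)$-equivariant homotopy inverse together with the homotopies descend to the $H$-orbit spaces, which shows that $\kappa^*/H$ is a $K$-homotopy equivalence. This yields the strong global equivalence needed above and completes the argument. An alternative to the universal-space computation would be to factor $\kappa^*$ through restriction to the finite-dimensional faithful subrepresentation $V\subseteq V\tensor\mR^\infty$ and invoke Proposition~\ref{prop:EKG infinite} together with two-out-of-three.
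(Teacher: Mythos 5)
Your argument is correct in substance, but it takes a noticeably heavier route than the paper, and its primary step as written has a gap that your own ``alternative'' is needed to close. The paper's proof is a two-line factorization: choose a linear isometry $\psi:V\tensor\mR^\infty\iso\mR^\infty$, so that evaluation at $\psi$ is a homeomorphism $(\map^\Lc(Q,Y)(V))^G\iso Y^{\bar G}$ for $\bar G=c_\psi(G)$, and observe that $\kappa\psi^{-1}$ is a correspondence $G\leadsto\bar G$, whence evaluation at $\kappa$ factors as this homeomorphism followed by $(\kappa\psi^{-1})\cdot-:Y^{\bar G}\to Y^G$ --- which is a weak equivalence by the very \emph{definition} of injectivity. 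No cofibrancy, fibrancy, or enriched SM7 input is needed; the notion of injective $\Lc$-space is tailor-made so that this proposition is immediate. Your route is essentially the representable shadow of this: under your isomorphism $Q(V)/H\iso\Lc/\bar H$, the map $\kappa^*/H:\Lc/H\to Q(V)/H$ is exactly the morphism $\varphi_\sharp$ associated with the correspondence $\varphi=\kappa\psi^{-1}:H\leadsto\bar H$, and the paper has \emph{already proved} (in the unnumbered proposition following Proposition~\ref{prop:level vs global fibration}) that every such $\varphi_\sharp$ is a strong global equivalence, by restricting to a finite-dimensional faithful subrepresentation and invoking Proposition~\ref{prop:L/G to L_G,V} --- which is precisely your fallback via Proposition~\ref{prop:EKG infinite} and two-out-of-three. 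Had you spotted this identification, you could also have bypassed the Ken-Brown/topological-model-structure argument: injectivity of $Y$ directly says that $\varphi\cdot-=\map^\Lc(\varphi_\sharp,Y)$ is a weak equivalence. What your approach buys is a genuinely stronger intermediate statement (a strong global equivalence of orbit $\Lc$-spaces) and a proof pattern that works for mapping into any fibrant object; what it costs is the full weight of Theorem~\ref{thm:global L-spaces}.

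Two caveats. First, your main route --- ``both spaces are universal spaces for the family of graph subgroups, hence any $(K\times H)$-map between them is a $(K\times H)$-homotopy equivalence'' --- is not justified as stated: fixed-point contractibility only yields a $(K\times H)$-\emph{weak} equivalence, and to upgrade to an equivariant homotopy equivalence (which you genuinely need, since you must descend it to $H$-orbits, where weak equivalences do not descend) you need both infinite-dimensional isometry spaces $\bL(\mR^\infty_H,\mR^\infty_K)$ and $\bL(V\tensor\mR^\infty,\mR^\infty_K)$ to have the homotopy type of $(K\times H)$-CW complexes. That is not free for these spaces; circumventing exactly this point is why the paper proves Proposition~\ref{prop:EKG infinite} via the inverse-limit device of Proposition~\ref{prop:inverse limit homotopy} rather than by a universal-space argument. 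So the factorization through the finite-dimensional restriction is not an optional ``alternative'' --- it is the step that makes the heart of your proof rigorous. Second, your parenthetical that $\mR^\infty_G$ restricted to $H$ is ``still a complete $H$-universe'' is false in general: the proposition assumes only that $G$ is a compact Lie subgroup of $\Lc$, not a universal one (deliberately so, since the hypothesis must be stable under passage to closed subgroups), so $\mR^\infty_G$ need not be complete. This slip is harmless, as you yourself note that all contractibility statements use only completeness of the target $\mR^\infty_K$, but the hypothesis should not be asserted.
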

\begin{proof}
We show that the evaluation map restricts to a weak equivalence
on $G$-fixed points.  
Since the hypotheses are stable under passage to closed subgroups of $G$,
applying this to closed subgroups provides the claim.

We choose a linear isometry $\psi:V\tensor\mR^\infty\iso \mR^\infty$
and transport the $G$-action on  $V\tensor\mR^\infty$ to
a $G$-action on $\mR^\infty$ by conjugating with $\psi$.
Since $G$ is compact and $V$ is faithful, the conjugation homomorphism
\[ c_\psi\ : \  G \ \to \ \Lc \]
is then a closed embedding, and hence an isomorphism onto its image
$\bar G=c_\psi(G)$.
In particular,  $\bar G$ is a compact Lie subgroup of~$\Lc$.
Moreover,
\[ (\kappa\psi^{-1})\circ c_\psi(g)\ = \ 
(\kappa\psi^{-1})\circ (\psi g\psi^{-1}) \ = \ 
\kappa g\psi^{-1} \ = \ g (\kappa \psi^{-1})  \]
for all $g\in G$, so $\kappa\psi^{-1}:G\leadsto\bar G$ is
a correspondence.
Evaluation at $\kappa$ factors as the composite
\[  
 \left(\map^\Lc(Q,Y)(V)\right)^G \ \xra[\iso]{h\mapsto h(\psi)} \
Y^{\bar G} \ \xra{(\kappa\psi^{-1})\cdot -} \ Y^G  \ ;\]
the first map is a homeomorphism,
and the second map is a weak equivalence because $Y$ is injective.
So evaluation at $\kappa$ is a weak equivalence.
\end{proof}

The functor $Q\tensor_\bL -$ is closely related to evaluation
of an orthogonal space at $\mR^\infty$.
Indeed, a choice of unit vector $u\in \mR^\infty$  gives rise to a 
natural linear isometric embedding~$-\tensor u:V\to V\tensor\mR^\infty$.
As $V$ ranges over all finite-dimensional inner product spaces, the composite maps
\[  Y(V) \times \bL(V\tensor \mR^\infty,\mR^\infty)\ \xra{Y(V)\times\bL(-\tensor u,\mR^\infty)}\ Y(V) \times \bL(V,\mR^\infty)\ \xra{\text{act}}\ Y(\mR^\infty) \]
are compatible with the coend relations, and assembly into a natural map
\[ \xi_Y \ : \  Q\tensor_\bL Y\ \to \ Y(\mR^\infty)\ . \]
An orthogonal space $Y$ is {\em flat} if a specific latching map $\nu_m:L_m Y\to Y(\mR^m)$
is an $O(m)$-cofibration for every $m\geq 0$, compare \cite[Def.\,1.2.9]{schwede-global};
flat orthogonal spaces are in particular closed by \cite[Prop.\,1.2.11]{schwede-global},
and they are the cofibrant objects in the global model structure
of  \cite[Thm.\,1.2.21]{schwede-global}.
Lind shows in~\cite[Lemma 9.7]{lind-diagram} that for every flat orthogonal
space (i.e., cofibrant~$\mathcal I$-space in his terminology)
the map~$\xi_Y:Q\tensor_\bL Y\to Y(\mR^\infty)$ is a weak equivalence.
We generalize this as follows:

\begin{prop}\label{prop:Q vs O}
For every flat orthogonal space $Y$
the map $\xi_Y: Q\tensor_\bL Y\to Y(\mR^\infty)$ is a global equivalence of $\Lc$-spaces.
\end{prop}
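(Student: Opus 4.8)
The plan is to compare the two colimit-preserving functors $Q\tensor_\bL-$ and evaluation at $\mR^\infty$ by a cell induction, reducing to the semifree orthogonal spaces $\bL_{G,V}$, where the map $\xi$ can be analyzed directly. Both functors preserve all colimits in the orthogonal space variable: the functor $Q\tensor_\bL-$ is a left adjoint, and $Y\mapsto Y(\mR^\infty)=\colim_{W\in s(\mR^\infty)}Y(W)$ is a colimit of evaluation functors, which are computed objectwise. Both also commute with products with a fixed space, so the natural transformation $\xi$ is compatible with coproducts, pushouts, sequential colimits, and products with $D^k$ and $\partial D^k$.

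First I would treat the base case $Y=\bL_{G,V}$ for a faithful $G$-representation $V$. Using the identifications $Q\tensor_\bL\bL_{G,V}\iso\bL(V\tensor\mR^\infty,\mR^\infty)/G$ and $\bL_{G,V}(\mR^\infty)=\bL(V,\mR^\infty)/G$, the map $\xi_{\bL_{G,V}}$ is induced by restriction along the $G$-equivariant linear isometric embedding $-\tensor u:V\to V\tensor\mR^\infty$. Since $V\tensor\mR^\infty\iso\bigoplus_\mN V$, I would show, for every universal subgroup $K$ of $\Lc$, that the restriction $\bL(V\tensor\mR^\infty,\mR^\infty_K)\to\bL(V,\mR^\infty_K)$ is a $(K\times G)$-homotopy equivalence; passing to the (free, since $V$ is faithful) $G$-quotient then yields a $K$-homotopy equivalence, so $\xi_{\bL_{G,V}}$ is even a strong global equivalence. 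This is exactly the kind of statement supplied by Proposition \ref{prop:EKG infinite}: for a graph subgroup $\Gamma\le K\times G$ the two embedding spaces have nonempty (hence contractible) $\Gamma$-fixed points under the same condition, because any $\Gamma$-irreducible constituent of $V$ that embeds into the complete universe $\mR^\infty_K$ necessarily occurs there infinitely often, so that all of $\bigoplus_\mN V$ embeds as soon as $V$ does. The degenerate case $V=0$ is trivial.

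Next I would run the induction over the skeletal filtration of a flat orthogonal space $Y$; these are the cofibrant objects of \cite[Thm.\,1.2.21]{schwede-global} and hence retracts of cell complexes built from the spaces $\bL_{G,V}$. At each stage $Y^{(m)}$ is a pushout of $Y^{(m-1)}$ along a coproduct of generating cofibrations of the form $\bL_{G,V}\times(\partial D^k\hookrightarrow D^k)$. Applying either functor gives a pushout square of $\Lc$-spaces, and since each functor commutes with products with a fixed space it sends such a generating cofibration to a map $Z\times(\partial D^k\hookrightarrow D^k)$, which is an h-cofibration of $\Lc$-spaces. Thus $\xi$ furnishes a morphism of pushout squares whose three non-terminal legs are global equivalences (the two cell terms by the base case, the lower skeleton $Y^{(m-1)}$ by the inductive hypothesis), so Proposition \ref{prop:global equiv basics}(v) shows that $\xi_{Y^{(m)}}$ is a global equivalence. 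Coproducts of cells are handled by Proposition \ref{prop:global equiv basics}(i), the passage to $Y=\colim_m Y^{(m)}$ by part (iii) of the same proposition (all maps in sight being closed embeddings), and retracts because global equivalences are closed under retracts. This yields the conclusion for every flat $Y$.

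The main obstacle will be the base case: upgrading Lind's non-equivariant statement \cite[Lemma 9.7]{lind-diagram} to a genuine global equivalence requires the equivariant universe-theoretic input that embeddability of $V$ and of $V\tensor\mR^\infty$ into the complete universe $\mR^\infty_K$ agree on all graph subgroups $\Gamma\le K\times G$. Once Proposition \ref{prop:EKG infinite} is available in the needed generality, everything else is the formal closure behaviour of global equivalences recorded in Proposition \ref{prop:global equiv basics}, together with the fact that both functors preserve the colimits and products with spaces out of which a flat orthogonal space is built.
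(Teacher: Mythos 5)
Your proposal is correct and follows essentially the same route as the paper's proof: the base case $Y=\bL_{G,V}$ is reduced via the identifications $Q\tensor_\bL\bL_{G,V}\iso\bL(V\tensor\mR^\infty,\mR^\infty)/G$ and $\bL_{G,V}(\mR^\infty)\iso\bL(V,\mR^\infty)/G$ to Proposition~\ref{prop:EKG infinite}, and the general case is handled by the same cell induction using parts (i), (ii), (iii) and (v) of Proposition~\ref{prop:global equiv basics} together with closure under retracts. The only cosmetic differences are your (unneeded, though harmless) remark that the $G$-action is free when passing to $G$-orbits --- a $(K\times G)$-homotopy equivalence descends to $G$-quotients without freeness --- and your observation that $\xi_{\bL_{G,V}}$ is in fact a \emph{strong} global equivalence, which the paper does not record but which follows from its argument as well.
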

\begin{proof}
We start with the special case where $Y$ is one
of the generating objects for the flat cofibrations. In other words, we
let $Y=\bL_{G,V}=\bL(V,-)/G$ be the semifree orthogonal space for a compact
Lie group $G$ and a faithful $G$-representation~$V$.
In this case, the $\Lc$-space $Q\tensor_\bL \bL_{G,V}$ is isomorphic to
$\bL(V\tensor\mR^\infty,\mR^\infty)/G$ and 
$\bL_{G,V}(\mR^\infty)$ is isomorphic to
$\bL(V,\mR^\infty)/G$. Under these identifications the morphism $\xi_{\bL_{G,V}}$ becomes
the restriction morphism
\[ \bL(-\tensor u,\mR^\infty)/G \ : \ 
\bL(V\tensor\mR^\infty,\mR^\infty)/G\ \to \ \bL(V,\mR^\infty)/G  \]
induced by the $G$-equivariant linear isometric embedding
$-\tensor u:V\to V\tensor\mR^\infty$.
If $V=0$, then this morphism is an isomorphism of $\Lc$-spaces.
If $V\ne 0$, then for every universal subgroup $K$ of $\Lc$, the restriction map
\[ \bL(-\tensor u,\mR^\infty_K) \ : \ 
\bL(V\tensor\mR^\infty,\mR^\infty_K)\ \to \ \bL(V,\mR^\infty_K)  \]
is a $(K\times G)$-homotopy equivalence by Proposition \ref{prop:EKG infinite}.
So the map descends to a $K$-homotopy equivalence on $G$-orbit spaces,
and $\bL(-\tensor u,\mR^\infty)/G$ is a global equivalence of $\Lc$-spaces.
This proves the special case $Y=\bL_{G,V}$.

Now we suppose that the orthogonal space $Y$ arises as the colimit of a sequence
\begin{equation}\label{eq:spc sequence}
 \emptyset = Y_0 \ \to \ Y_1 \ \to \dots \ \to \ Y_n \ \to \ \dots 
\end{equation}
in which each~$Y_n$ is obtained from~$Y_{n-1}$ as a pushout of orthogonal spaces
\[\xymatrix{
 \coprod_I \bL_{G_i,V_i}\times \partial D^{k_i} \ar[r]^-{\text{incl}}\ar[d] &
 \coprod_I \bL_{G_i,V_i}\times  D^{k_i} \ar[d] \\
Y_{n-1}\ar[r] & Y_n}\]
for some indexing set~$I$, compact Lie groups $G_i$, 
faithful $G_i$-representations $V_i$, and numbers $k_i\geq 0$.
We show by induction that the morphisms $\xi_{Y_n}: Q\tensor_\bL Y_n\to Y_n(\mR^\infty)$ 
are global equivalences. The induction starts with $n=0$, where there is nothing to show.

For the inductive step we assume that the morphism $\xi_{Y_{n-1}}$ is a global equivalence,
and we show that then $\xi_{Y_n}$ is a global equivalence as well.
The functors $Q\tensor_\bL-$ and evaluation at $\mR^\infty$ preserve
colimits. The morphism $\xi_{Y_n}$ is thus induced by the commutative diagram
\[  \xymatrix{
Q\tensor_\bL Y_{n-1} \ar[d]_{\xi_{Y_{n-1}}}& 
\coprod_I Q\tensor_\bL \bL_{G_i,V_i}\times \partial D^{k_i} 
\ar[r]^-{\text{incl}}\ar[l]\ar[d]^{\coprod \xi_{\bL_{G_i,V_i}}\times \partial D^{k_i}}
 &
 \coprod_I Q\tensor_\bL \bL_{G_i,V_i}\times  D^{k_i} \ar[d]^{\coprod \xi_{\bL_{G_i,V_i}}\times D^{k_i}} \\
Y_{n-1}(\mR^\infty) & \coprod_I \bL_{G_i,V_i}(\mR^\infty)\times \partial D^{k_i} \ar[r]_-{\text{incl}}\ar[l] &
 \coprod_I \bL_{G_i,V_i}(\mR^\infty)\times  D^{k_i} }\]
by passage to pushouts in the horizontal direction.
The left vertical map is a global equivalence by hypothesis.
The middle and right vertical maps are global equivalences by the
special case above and parts (i) and (ii) of Proposition \ref{prop:global equiv basics}.
The inclusion of $ \coprod_I \bL_{G_i,V_i}\times \partial D^{k_i}$ into 
$ \coprod_I \bL_{G_i,V_i}\times  D^{k_i}$ is an h-cofibration of orthogonal spaces.
Since both $Q\tensor_\bL-$ and evaluation at $\mR^\infty$ preserve h-cofibrations,
the two right horizontal morphisms in the diagram are h-cofibrations
of $\Lc$-spaces.
So the induced map on pushouts $\xi_{Y_n}$ is a global equivalence of $\Lc$-spaces
by Proposition \ref{prop:global equiv basics}~(v).

Since $Y$ is a colimit of the sequence~\eqref{eq:spc sequence},
$Q\tensor_\bL Y$ is a colimit of the sequence~$Q\tensor_\bL Y_n$,
and $Y(\mR^\infty)$ is a colimit of the sequence~$Y_n(\mR^\infty)$.
Moreover, since each morphism $Y_{n-1}\to Y_n$ is an h-cofibration
of orthogonal spaces, the morphisms
 $Q\tensor_\bL Y_{n-1}\to Q\tensor_\bL Y_n$
and $Y_{n-1}(\mR^\infty)\to Y_n(\mR^\infty)$ are h-cofibrations of $\Lc$-spaces,
and hence closed embedding.
Since global equivalences are homotopical for 
sequential colimits along closed embeddings 
(by Proposition  \ref{prop:global equiv basics}~(iii)),
we conclude that the morphism $\xi_Y$ is a global equivalence.

Every flat orthogonal space is a retract of a flat orthogonal space
of the form considered in the previous paragraph.
Since global equivalences are closed under retracts, 
the morphism $\xi_Y$ is a global equivalence for every flat orthogonal space.
\end{proof}

\begin{theorem}\label{thm:orthogonal vs L}
The adjoint functor pair
\[\xymatrix@C=10mm{
Q\tensor_\bL-\ : \  \spc \quad \ar@<.4ex>[r] & 
\quad \Lc\bT \ : \ \map^\Lc(Q,-)\ar@<.4ex>[l]  }\]
is a Quillen equivalence with respect to the positive global model structure
on orthogonal spaces and the  global model structure on $\Lc$-spaces.  
\end{theorem}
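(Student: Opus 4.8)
The plan is to verify the two defining properties of a Quillen equivalence: first that $(Q\tensor_\bL-,\map^\Lc(Q,-))$ is a Quillen adjunction, and second that it induces an equivalence on homotopy categories. For the Quillen pair, I would check that the right adjoint $\map^\Lc(Q,-)$ preserves fibrations and acyclic fibrations. Concretely, I would show that if $Y$ is an injective $\Lc$-space (a global fibration to a point), then $\map^\Lc(Q,Y)$ is a positive global fibration of orthogonal spaces; the key input is Proposition~\ref{prop:map(Q,Y) and fix}, which identifies the fixed points $\left(\map^\Lc(Q,Y)(V)\right)^G$ with $Y$-fixed points via evaluation at a correspondence, so the homotopy-cartesian squares defining positive global fibrations of orthogonal spaces translate into the injectivity/global-fibration conditions on $Y$. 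More generally, for a global fibration $f\colon Y\to Z$ of $\Lc$-spaces, the same fixed-point identification should turn the defining squares~\eqref{eq:fibration characterization} into the squares characterizing positive global fibrations, so $\map^\Lc(Q,f)$ is a positive global fibration; the weak-equivalence half follows because both model structures detect equivalences via fixed points and Proposition~\ref{prop:map(Q,Y) and fix} is compatible with $f$. Rather than checking fibrations directly on all of $\bL$, it may be cleaner to verify the pair is a Quillen adjunction by testing the left adjoint against the generating (acyclic) cofibrations of the positive global model structure and using that $Q\tensor_\bL \bL_{G,V}\iso \bL(V\tensor\mR^\infty,\mR^\infty)/G$ is $\Cc^L$-cofibrant.

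For the equivalence part, the cleanest route is to use the criterion that a Quillen pair is a Quillen equivalence if the right adjoint detects and preserves weak equivalences and the derived unit (or counit) is a weak equivalence on cofibrant (resp. fibrant) objects, as in~\cite[Cor.\,1.3.16]{hovey-book}. The decisive ingredient is Proposition~\ref{prop:Q vs O}, which shows that for every flat (i.e.\ cofibrant) orthogonal space $Y$ the natural map $\xi_Y\colon Q\tensor_\bL Y\to Y(\mR^\infty)$ is a global equivalence of $\Lc$-spaces. This essentially computes the derived left adjoint as evaluation at $\mR^\infty$, which by \cite[Prop.\,1.1.17]{schwede-global} reflects global equivalences of closed orthogonal spaces; so I expect $Q\tensor_\bL-$ to create global equivalences between cofibrant orthogonal spaces, giving that the total left derived functor is fully faithful.

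The remaining step is essentiality/surjectivity: I would show that every $\Lc$-space is globally equivalent to one in the image of the derived functor, equivalently that the derived unit $Y\to \map^\Lc(Q,Q\tensor_\bL Y)$ is a global equivalence for cofibrant $Y$ and the derived counit $Q\tensor_\bL(\map^\Lc(Q,Z))\to Z$ is a global equivalence for fibrant (injective) $Z$. Here the $\Lc$-to-orthogonal comparison interacts with the orbispace picture: by Theorem~\ref{thm:L and orbispace} the homotopy category of $\Lc$-spaces is generated (under homotopy colimits) by the objects $\Lc/G$ for universal subgroups $G$, and $\Lc/G$ is globally equivalent to $\bL(V,\mR^\infty)/G\iso\bL_{G,V}(\mR^\infty)$ for a faithful finite-dimensional $V$ by Proposition~\ref{prop:L/G to L_G,V}; since $\bL_{G,V}$ is flat, Proposition~\ref{prop:Q vs O} shows $\Lc/G$ lies (up to global equivalence) in the image of $Q\tensor_\bL-$. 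Combining full faithfulness of the derived left adjoint with the fact that its image hits a generating set of the target homotopy category yields that it is an equivalence.

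The main obstacle I anticipate is the bookkeeping around the positive model structure: one must consistently restrict to \emph{nonzero} faithful representations $V$ (this is exactly why Proposition~\ref{prop:map(Q,Y) and fix} assumes $V\neq 0$), and verify that the derived unit computation is not corrupted by the $V=0$ level where $\xi_{\bL_{G,0}}$ is merely an isomorphism rather than carrying global-equivalence content. In other words, the delicate point is checking that the positive flatness/cofibrancy conditions precisely match the correspondences used to define global fibrations of $\Lc$-spaces, so that the fixed-point translation in Proposition~\ref{prop:map(Q,Y) and fix} globalizes coherently over all of $\bL$ rather than just at individual representations. Once that matching is confirmed, the two halves---Quillen adjunction via fixed-point fibration squares, and equivalence via $\xi_Y$ from Proposition~\ref{prop:Q vs O}---assemble into the theorem.
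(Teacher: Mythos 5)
Your proposal gets the right ingredients on the table but the logic of the equivalence half does not close. You correctly deduce from Proposition \ref{prop:Q vs O} together with the fact that evaluation at $\mR^\infty$ detects global equivalences between closed orthogonal spaces that the derived left adjoint is conservative on flat (hence closed) orthogonal spaces. But ``conservative left adjoint whose essential image hits a generating set'' does not imply Quillen equivalence, and your claimed equivalence between essential surjectivity and the derived unit/counit being isomorphisms is false: a conservative, essentially surjective left adjoint need not be an equivalence --- for instance $(-)_+:\mathrm{Set}\to\mathrm{Set}_*$ (with trivial model structures) is faithful, conservative and essentially surjective, yet not full. What must actually be proved, and what the paper devotes the bulk of its argument to, is that the derived unit is a global equivalence on every positive flat $A$: choose a fibrant replacement $i:A\to B$ in the positive global model structure, a global equivalence $p:B(\mR^\infty)\to X$ with injective target, set $q=p\circ\xi_B\circ(Q\tensor_\bL i)$, and verify that the adjoint $q^\sharp:B\to\map^\Lc(Q,X)$ is a positive strong level equivalence. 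That verification needs a $G$-equivariant embedding $\kappa:V\tensor\mR^\infty\to\mR^\infty_G$ chosen so that $\kappa\circ(-\tensor u)$ is the inclusion, the fact that $B$ is closed and positively static (so that $B(V)^G\to B(\mR^\infty_G)^G$ is a weak equivalence), and Proposition \ref{prop:map(Q,Y) and fix}. Nothing in your sketch supplies this computation, and the generator argument cannot replace it: closure of the essential image under homotopy colimits already presupposes the full faithfulness you are trying to establish, so the argument is circular. (The paper instead pairs the unit statement with conservativity of the \emph{right} adjoint: $\map^\Lc(Q,-)$ reflects global equivalences between injective $\Lc$-spaces.)

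The Quillen-pair half also contains a genuine misstep: you apply Proposition \ref{prop:map(Q,Y) and fix} to an arbitrary global fibration $f:Y\to Z$, but that proposition requires the $\Lc$-space to be injective, so the translation of the squares \eqref{eq:fibration characterization} into the homotopy-cartesian squares defining positive global fibrations is only available for fibrations between fibrant objects. The paper handles exactly this restricted situation and then closes the gap with Dugger's criterion \cite[Cor.\,A.2]{dugger-replacing}: it suffices that $\map^\Lc(Q,-)$ preserves acyclic fibrations (which it does, via the evaluation-at-$\psi$ homeomorphism $(\map^\Lc(Q,Y)(V))^G\iso Y^{\bar G}$, since acyclic fibrations agree in the global and $\Cc^L$-projective model structures) and takes fibrations between fibrant objects to fibrations. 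Your fallback route --- testing $Q\tensor_\bL-$ against generating (acyclic) cofibrations --- could work in principle, but you would then owe a proof that $Q\tensor_\bL-$ sends the mapping-cylinder-type generating acyclic cofibrations of the positive global model structure to global equivalences of $\Lc$-spaces, which your sketch does not provide. Your concern about restricting to nonzero faithful $V$ is well placed but secondary to these two gaps.
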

\begin{proof}
This theorem is a global sharpening of Lind's non-equivariant
Quillen equivalence \cite[Thm.\ 9.9]{lind-diagram}, 
and some of our arguments are `globalizations' of Lind's.
We let $G$ be a compact Lie group and $V$ a non-trivial faithful $G$-representation.
Then $V\tensor\mR^\infty$ is a faithful $G$-representation of countably infinite
dimension.
We choose a linear isometry $\psi:V\tensor\mR^\infty\iso \mR^\infty$
and transport the $G$-action on  $V\tensor\mR^\infty$ to
a $G$-action on $\mR^\infty$ by conjugating with $\psi$.
Since $G$ is compact, the conjugation homomorphism
\[ c_\psi\ : \  G \ \to \ \Lc \]
is then a closed embedding, and hence an isomorphism onto its image
$\bar G=c_\psi(G)$, which is then a compact Lie subgroup of~$\Lc$.
Moreover, $\psi:V\tensor\mR^\infty\iso c_\psi^*(\mR^\infty_{\bar G})$
is $G$-equivariant, by construction.
Evaluation at $\psi$ is then a homeomorphism, natural in $Y$,
\[
 \left(\map^\Lc(Q,Y)(V)\right)^G \ = \ 
(\map^\Lc(\bL(V\tensor\mR^\infty,\mR^\infty),Y))^G \ \to \ Y^{\bar G}   \ ,\quad
h \ \longmapsto \ h(\psi) \ .
\]
Now we let $f:X\to Y$ be a morphism of~$\Lc$-spaces that is a fibration 
(respectively acyclic fibration) in the $\Cc^L$-projective model structure
of Proposition \ref{prop:projective model structure}.
Then by the above, the morphism of orthogonal spaces~$(\map^\Lc(Q,f)(V))^G$
is isomorphic to the map
\[ f^{\bar G} \ : \ X^{\bar G} \ \to \ Y^{\bar G}\ ,\]
which is a a Serre fibration (respectively a Serre fibration and weak equivalence).
So $\map^\Lc(Q,f)$ is a fibration (respectively acyclic fibration)
in the positive strong level model structure of orthogonal spaces,
compare the proof of \cite[Prop.\,1.2.23]{schwede-global}.
Since the acyclic fibrations agree in the global and 
$\Cc^L$-projective model structure of $\Lc$-spaces, 
and they agree in the positive global and strong level model structures
of orthogonal spaces, this shows in particular that the right adjoint
preserves acyclic fibrations.

Now we suppose that $f:X\to Y$ is a global fibration between injective $\Lc$-spaces.
Then $f$ is in particular a $\Cc^L$-fibration, and hence $\map^\Lc(Q,f)$
is a positive strong level fibration by the previous paragraph.
We let $G$ be a universal subgroup of $\Lc$ and $\varphi:V\to W$ 
a $G$-equivariant linear isometric embedding of $G$-representations,
where $V$ is non-zero and the action on $V$ (and hence also on $W$) is faithful.
We choose a $G$-equivariant
linear isometric embedding $\psi:W\tensor\mR^\infty\to \mR_G^\infty$;
the composite $\kappa=\psi\circ(\varphi\tensor\mR^\infty):V\tensor\mR^\infty\to \mR_G^\infty$
is then also a $G$-equivariant linear isometric embedding.
In the commutative diagram
\[ \xymatrix@C=25mm{ 
(\map^\Lc(Q,X)(V))^G \ar@/^1pc/[rr]^(.7){h\mapsto h(\kappa)}
\ar[r]_-{(\map^\Lc(Q,X)(\varphi))^G} \ar[d]_{(\map^\Lc(Q,f)(V))^G} &
(\map^\Lc(Q,X)(W))^G \ar[d]^{(\map^\Lc(Q,f)(W))^G} \ar[r]_-{h\mapsto h(\psi)}&  
X^G \ar[d]^{f^G}\\
(\map^\Lc(Q,Y)(V))^G\ar@/_1pc/[rr]_(.7){h\mapsto h(\kappa)}
\ar[r]^-{(\map^\Lc(Q,Y)(\varphi))^G} &
(\map^\Lc(Q,Y)(W))^G \ar[r]^-{h\mapsto h(\psi)}& Y^G } \]
the right and composite horizontal maps are weak equivalences
by Proposition \ref{prop:map(Q,Y) and fix}.
So the left horizontal maps 
$(\map^\Lc(Q,X)(\varphi))^G$ and  $(\map^\Lc(Q,Y)(\varphi))^G$
are weak equivalences, and we conclude that the left square is homotopy cartesian.
Since every compact Lie group is isomorphic to a universal subgroup of $\Lc$,
this proves that the morphism $\map^\Lc(Q,f)$ is a fibration in
the positive global model structure of orthogonal spaces.
Now we know that the right adjoint $\map^\Lc(Q,-)$ 
preserves acyclic fibrations, and it takes fibrations between fibrant objects
in the global model structure of $\Lc$-spaces to
fibrations in the positive global model structure of orthogonal spaces.
By a criterion of Dugger \cite[Cor.\,A.2]{dugger-replacing},
this proves that the adjoint functor pair $(Q\tensor_\bL -,\map^\Lc(Q,-))$ 
is a Quillen pair with respect to the two global model structures.

It remains to show that the Quillen pair is a Quillen equivalence.
This is a consequence of the following two facts:
\begin{enumerate}[(a)]
\item The right adjoint $\map^\Lc(Q,-)$
reflects global equivalences between injective $\Lc$-spaces, and
\item for every positive flat orthogonal space $A$, there is a
global equivalence of $\Lc$-spaces  $q:Q\tensor_\bL A\to X$ to an
injective $\Lc$-space such that the composite
\begin{equation} \label{eq:right adjoint criterion}
  A \ \xra{\ \eta_A\ }\ \map^\Lc(Q,Q\tensor_\bL A)\ \xra{\map^\Lc(Q, q)}\ \map^\Lc(Q,X)   
\end{equation}
is a global equivalence of orthogonal spaces.
\end{enumerate}
Indeed, the first property guarantees that the right derived functor
of the right adjoint reflects isomorphisms in the homotopy category,
and the second condition ensures that the unit of the derived
adjunction is a natural isomorphism. So the derived functors
are equivalences of categories, and the Quillen pair is a Quillen equivalence.

(a) We let $f:X\to Y$ be a morphism between injective $\Lc$-spaces
such that $\map^\Lc(Q,f)$ is a global equivalence of orthogonal spaces.
Since $\map^\Lc(Q,-)$ is a right Quillen functor, the orthogonal spaces
$\map^\Lc(Q,X)$ and $\map^\Lc(Q,Y)$ are fibrant in the positive global model structure,
i.e., they are positively static.
So the global equivalence  $\map^\Lc(Q,f)$ is in fact a positive strong level
equivalence by the positive analog of \cite[Prop.\,1.2.14 (i)]{schwede-global}, 
i.e., for every compact Lie group $G$ and every non-zero faithful
$G$-representation $V$ the map
\[ (\map^\Lc(Q,f)(V))^G \ : \ 
(\map^\Lc(Q,X)(V))^G \  \to \ (\map^\Lc(Q,Y)(V))^G \]
is a weak equivalence.
We specialize to the case where $G$ is a universal subgroup of $\Lc$ 
and we choose a faithful, non-zero, finite-dimensional $G$-representation
$V$ and a $G$-equivariant linear isometric embedding
$\kappa:V\tensor\mR^\infty\to \mR^\infty_G$. 
In the commutative square
\[ \xymatrix@C=25mm{ 
(\map^\Lc(Q,X)(V))^G
\ar[r]^-{(\map^\Lc(Q,f)(V))^G} \ar[d]_{h\mapsto h(\kappa)} &
(\map^\Lc(Q,Y)(V))^G \ar[d]^{h\mapsto h(\kappa)} \\
X^G\ar[r]_-{ f^G} & Y^G } \]
the two vertical maps are weak equivalences by Proposition \ref{prop:map(Q,Y) and fix}
and the upper horizontal map is a weak equivalence by the above.
So $f^G$ is a weak equivalence, i.e., $f$ is a global equivalence of $\Lc$-spaces.

(b) Given a positive flat orthogonal space $A$, 
we choose a fibrant replacement $i:A\to B$ in the positive global 
model structure of orthogonal spaces; in other words, $i$ is a positive flat
cofibration and global equivalence, and $B$ is a positively static orthogonal space.
Then we choose a global equivalence of $\Lc$-spaces $p:B(\mR^\infty)\to X$
whose target is injective. 
The composite 
\[ Q\tensor_\bL A \ \xra{Q\tensor_\bL i}\ 
 Q\tensor_\bL B \ \xra{\ \xi_B\ }\ B(\mR^\infty)\ \xra{\ p \ } \ X
 \]
is then the desired global equivalence $q:Q\tensor_\bL A\to X$ with injective target.
This exploits that as a left Quillen functor, $Q\tensor_\bL-$
takes acyclic cofibrations to global equivalences,
and that $\xi_B$ is a global equivalence by Proposition \ref{prop:Q vs O}.
It remains to show that the composite \eqref{eq:right adjoint criterion}
is a global equivalence.

We let $G$ be a universal subgroup of $\Lc$,
and we let $V$ be a finite-dimensional faithful $G$-subrepresentation of
$\mR^\infty_G$. We choose a $G$-equivariant linear
isometric embedding $\kappa:V\tensor \mR^\infty\to\mR^\infty_G$
such that the composite
\[ V\ \xra{-\tensor u}\ V\tensor \mR^\infty\ \xra{\ \kappa \ }\ \mR^\infty_G \]
is the inclusion.
We let $q^\sharp:B\to\map^\Lc(Q,X)$ denote the adjoint of the composite
$p\circ\xi_B:Q\tensor_\bL B\to X$.
Then the following square of $G$-equivariant maps commutes:
\[ \xymatrix@C=7mm{ 
B(V)\ar[rr]^-{q^\sharp(V)}\ar[d]_{B(\text{incl})} && (\map^\Lc(Q,X)(V)) \ar@{=}[r] &
\map^\Lc(\bL(V\tensor\mR^\infty,\mR^\infty),X) \ar[d]^{h\mapsto h(\kappa)}\\
B(\mR^\infty_G)\ar[rrr]_-p &&& X
} \]
The left vertical map induces a weak equivalence of $G$-fixed points
because $B$ is flat, hence closed, and positively static.
The lower horizontal map induces a weak equivalence of $G$-fixed points
because $p$ is a global equivalence and $G$ is a universal subgroup.
The right vertical map induces a weak equivalence on $G$-fixed points
by Proposition \ref{prop:map(Q,Y) and fix}.
So we conclude that the map 
\[ (q^\sharp(V))^G \ : \ B(V)^G \ \to \ \big( \map^\Lc(Q,X)(V) \big)^G \]
is a weak equivalence.
Every pair consisting of a compact Lie group and a finite-dimensional faithful
representation is isomorphic to a pair $(G,V)$ as above.
So the morphism $q^\sharp$ is a positive strong level equivalence of orthogonal spaces,
and hence a global equivalence.
The composite \eqref{eq:right adjoint criterion}
agrees with $q^\sharp\circ i:A\to \map^\Lc(Q,X)$, so it is 
a global equivalence because $i$ and $q^\sharp$ are.

Now we have verified conditions (a) and (b), and this completes the
proof that the adjoint functor pair $(Q\tensor_{\bL}-,\map^\Lc(Q,-))$ 
is a Quillen equivalence.
\end{proof}

\begin{rk}[$\Fc$-global model structure of $\Lc$-spaces] 
Our results all have versions with respect to a 
{\em global family} $\Fc$, i.e., is a class of compact Lie groups that is closed
under isomorphisms, subgroups and quotients.
We only indicate what goes into this, and leave the details to interested readers.
The global model structure of $\Lc$-spaces 
(see Theorem~\ref{thm:global L-spaces}) has a straightforward
version relative to the family~$\Fc$.
We denote by $\Lc\cap \Fc$ the collection of compact Lie subgroups of $\Lc$
that also belong to the global family $\Fc$.
Proposition~\ref{prop:projective model structure}
then provides the  $(\Lc\cap \Fc)$-projective model structure
on the category of $\Lc$-spaces.

We call a morphism $f:X\to Y$ of $\Lc$-spaces an {\em $\Fc$-equivalence}
if for every universal subgroup~$G$ of~$\Lc$ that belongs to~$\Fc$
the induced map
\[ f^G \ : \ X^G \ \to \ Y^G \]
is a weak homotopy equivalence.
We call $f$ an {\em $\Fc$-global fibration} if it is an $(\Lc\cap\Fc)$-fibration
and for every correspondence $\varphi:G\leadsto\bar G$ 
between groups in $\Lc\cap\Fc$ the map
\[ (f^{\bar G}, \varphi\cdot-)\ : \ X^{\bar G} \ \to \ Y^{\bar G} \times_{Y^G }  X^G \]
is a weak equivalence.
Essentially the same proof as for Theorem~\ref{thm:global L-spaces},
but with all relevant groups restricted to the global family $\Fc$,
then shows that the $\Fc$-equivalences, $\Fc$-global fibrations
and $(\Lc\cap\Fc)$-cofibrations form a cofibrantly generated proper 
topological model structure
on the category of $\Lc$-spaces, the {\em $\Fc$-global model structure}.

The Quillen equivalence of Theorem~\ref{thm:L and orbispace}
has a direct analog for a global family $\Fc$, with virtually the same proof.
We let $\bO^\Fc_{\gl}$ be the full topological subcategory
of the global orbit category with objects the universal subgroups of $\Lc$
that belong to the family $\Fc$. Then taking fixed points is a right Quillen equivalence
\[ \Phi^\Fc \ : \  \Lc\bT\ \to \ \Fc\text{-}\orbispc  \]
from the category of $\Lc$-spaces with the $\Fc$-global model structure
to the category of $\Fc$-orbispaces, i.e., contravariant continuous functors
from $\bO_{\gl}^\Fc$ to spaces.
Theorem~\ref{thm:orthogonal vs L} also has a relative version,
with the same proof: for every global family~$\Fc$,
the adjoint functor pair  $(Q\tensor_{\bL}-,\map^\Lc(Q,-))$
is a Quillen equivalence with respect to the positive $\Fc$-global model
structure on orthogonal spaces (the~$\Fc$-based analog 
of~\cite[Prop.\,1.2.23]{schwede-global}) and 
the~$\Fc$-global model structure on $\Lc$-spaces.

When $\Fc=\td{e}$ is the global family of trivial groups, we recover 
the Quillen equivalence established by Lind in~\cite[Thm.\,9.9]{lind-diagram}.
To make the connection, we recall that orthogonal spaces are called
{\em $\mathcal I$-spaces} in~\cite{lind-diagram}, and the category of
$\mathcal I$-spaces is denoted $\mathcal I\mathscr U$. 
Moreover, our~$\Lc$-spaces are called
{\em $\mathbb L$-spaces} in~\cite{lind-diagram}, where ~$\mathbb L$
stands for the monad whose underlying functor sends~$A$ to~$\Lc\times A$;
the category of $\mathbb L$-spaces is denoted $\mathscr U[\mathbb L]$. 
For the trivial global family there is no difference between 
$(\Lc\cap\td{e})$-equivalences and $\td{e}$-equivalences, 
and both specialize to the morphisms of $\Lc$-spaces
that are weak equivalences on underlying non-equivariant spaces.
\end{rk}

\begin{appendix}
\section{Topology of linear isometries}
\label{app A}

In this appendix we collect some facts about the topology on spaces 
of linear isometries; while these facts may be well-known to experts,
not all are particularly well documented in the literature.
I would like to thank Andrew Blumberg and Mike Mandell for key hints
on the proof of Proposition~\ref{prop:properties of L}.

By a `space' we mean a {\em compactly generated space} in the sense of~\cite{mccord},
i.e., a $k$-space (also called {\em Kelley space})
that satisfies the weak Hausdorff condition.
We write $\bT$ for the category  of compactly generated spaces and continuous maps.
We emphasize that in contrast to some other papers on the subject,
the weak Hausdorff condition is subsumed in `compactly generated'.
Besides~\cite{mccord}, some other general references on compactly generated
spaces are the Appendix~A of Lewis' thesis \cite{lewis-thesis},
or Appendix A of the author's book \cite{schwede-global}. 

The category $\bT$ is complete and cocomplete.
Given two compactly generated spaces $X$ and~$Y$, 
we write $X \times Y$ for the product in $\bT$, i.e., the Kelleyfication
of the usual product topology.
If $X$ or $Y$ is locally compact, then the usual product topology
is already a $k$-space, and hence compactly generated,
so in this case no Kelleyfication is needed.
We denote by $\map(X,Y)$ the set of continuous maps from $X$ to $Y$, 
endowed with the Kelleyfication of the compact-open topology.
Then $\map(-,-)$ is the internal function object in $\bT$, 
i.e., for all compactly generated spaces
$X, Y$ and $Z$, the map
\[ \map(X\times Y,Z)\ \to \ \map(X,\map(Y,Z)) \ , \quad
F\ \longmapsto \ \{x\longmapsto F(x,-)\}\]
is a homeomorphism, 
compare \cite[App.\,A, Thm.\,5.5]{lewis-thesis} or \cite[Thm.\,A.23]{schwede-global}.

An {\em inner product space} is an $\mR$-vector space equipped with
a scalar product, i.e., a positive definite symmetric bilinear form.
We will only be concerned with inner product spaces of finite or
countably infinite dimension. A finite-dimensional inner product space
is given the metric topology induced from the scalar product.
An infinite dimensional inner product space is given
the weak topology from the system of its finite-dimensional subspaces.
In infinite dimensions, the weak topology is strictly finer than the metric topology.

If $V$ and $W$ are inner product spaces, we denote by $\bL(V,W)$
the set of linear isometric embeddings, i.e., $\mR$-linear maps
that preserve the inner product. 
A key property of the above topologies 
is that all $\mR$-linear maps are automatically continuous.
So $\bL(V,W)$ is a subset of the space $\map(V,W)$ of continuous maps, and we endow
$\bL(V,W)$ with the subspace topology.
The following proposition shows in particular that $\bL(V,W)$
becomes a compactly generated space in this topology.

\begin{prop}
For all inner product spaces $V$ and $W$, the set $\bL(V,W)$
of linear isometric embeddings is closed in the space $\map(V,W)$
of continuous maps. So $\bL(V,W)$ is a Hausdorff $k$-space in 
the subspace topology from $\map(V,W)$.
\end{prop}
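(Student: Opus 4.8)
The plan is to exhibit $\bL(V,W)$ as an intersection of preimages of closed sets under continuous maps out of $\map(V,W)$, and then to invoke the standard permanence properties of the category $\bT$. The basic tool is that $\bT$ is cartesian closed: by the exponential law recalled in this appendix, the counit $\ev\colon\map(V,W)\times V\to W$ is continuous, and precomposing with the inclusion of a single point $v\in V$ shows that each evaluation $\ev_v\colon\map(V,W)\to W$, $f\mapsto f(v)$, is continuous. Every inner product space is Hausdorff — in the infinite-dimensional case because the weak topology is finer than the metric topology — so the diagonal of $W\times W$ is closed and points of $\mR$ are closed; these are the facts the argument will feed on.

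First I would encode linearity. For fixed $v,v'\in V$ and $a,b\in\mR$, the two maps $f\mapsto f(av+bv')$ and $f\mapsto a\cdot f(v)+b\cdot f(v')$ are continuous maps $\map(V,W)\to W$, assembled from the $\ev_v$ and the continuous vector space operations of $W$. Their equalizer $\{f : f(av+bv')=a\cdot f(v)+b\cdot f(v')\}$ is the preimage of the closed diagonal of $W\times W$, hence closed in $\map(V,W)$. Intersecting over all $v,v',a,b$ cuts out the closed subspace of $\mR$-linear maps.

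Next I would encode the isometry condition. Rather than preserve the bilinear form directly — which would force me to argue continuity of $\langle-,-\rangle$ on the Kelleyfied product $W\times W$, an awkward point in infinite dimensions — I would use only the squared norm $N\colon W\to\mR$, $N(w)=\langle w,w\rangle$. This single map is continuous: $W$ carries the weak topology, i.e.\ $W=\colim_{U} U$ over its finite-dimensional subspaces, and $N|_U$ is a continuous quadratic form for each such $U$, so $N$ is continuous out of the colimit. Hence for each $v$ the composite $f\mapsto N(f(v))$ is continuous and $\{f : N(f(v))=N(v)\}$ is closed, being the preimage of a point in the Hausdorff space $\mR$. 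By the polarization identity an $\mR$-linear map that preserves all norms preserves the inner product, and a norm-preserving linear map is automatically injective, so the intersection of the linearity conditions with these norm conditions over all $v\in V$ is exactly $\bL(V,W)$. This exhibits $\bL(V,W)$ as closed.

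Finally, since $W$ is Hausdorff, $\map(V,W)$ is Hausdorff (the compact-open topology into a Hausdorff space is Hausdorff, and Kelleyfication only refines the topology), so its closed subspace $\bL(V,W)$ is Hausdorff; and a closed subspace of a $k$-space is again a $k$-space, which yields the asserted Hausdorff $k$-space. The step to watch throughout is the infinite-dimensional case: every continuity and closedness claim must be checked against the weak (colimit) topologies on $V$ and $W$ and the Kelleyfied product topologies, and it is precisely to sidestep the product that I reduce the isometry condition to the one-variable map $N$. I expect this reduction, rather than any single calculation, to be the main point of the argument.
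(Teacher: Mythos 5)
Your proof is correct, and it rests on the same two pillars as the paper's: continuity of evaluation (the counit of the exponential law) and the replacement of the bilinear form by the norm, whose continuity on the weak topology you justify exactly as one should, by restricting to the finite-dimensional stages of the colimit. The difference is purely in how the infinitely many equational conditions are packaged. The paper forms two continuous maps $\alpha(f,\lambda,v,v')=f(\lambda v+v')-\lambda f(v)-f(v')$ and $\beta(f,v)=|f(v)|-|v|$, passes to their adjoints $\tilde\alpha\colon\map(V,W)\to\map(\mR\times V\times V,W)$ and $\tilde\beta\colon\map(V,W)\to\map(V,\mR)$ via the exponential law, and exhibits $\bL(V,W)=\tilde\alpha^{-1}(0)\cap\tilde\beta^{-1}(0)$ as the intersection of just two preimages of a closed point (the zero map), implicitly using that points are closed in the weak Hausdorff mapping spaces. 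You instead intersect, over all parameters $(a,b,v,v')$ and all $v$, the pointwise closed conditions, which needs only continuity of the individual evaluations $\ev_v$, the closed diagonal of $W\times W$ (available since $W$ is Hausdorff, as closedness in the ordinary product implies closedness in the Kelleyfied one), and closed points in $\mR$. Your route is marginally more elementary — the full exponential law enters only through the counit — at the cost of an uncountable intersection in place of two conditions; the paper's version is essentially the adjoint-packaged form of yours. Your concluding Hausdorff-$k$-space argument (compact-open into Hausdorff is Hausdorff, Kelleyfication refines, closed subspaces of $k$-spaces are $k$-spaces) is also sound and matches what the paper leaves implicit.
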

\begin{proof}
Since the evaluation maps, the vector space structure maps
and the norm are continuous, so are the two maps
\[  
\alpha\ : \  \map(V,W) \times\mR\times V\times V \ \to \ W\ ,\quad
 (f,\lambda,v,v') \ \longmapsto \  f(\lambda v + v')-\lambda f(v)-f(v')
\]
and
\[
\beta \ : \  \map(V,W)\times V \ \to \ \mR\ , \quad
(f,v)\ \longmapsto\ |f(v)| - |v| \ . 
\]
Their adjoints
\[ 
\tilde\alpha\ : \  \map(V,W) \ \to \ \map(\mR\times V\times V,W)
 \]
respectively
\[
\tilde \beta \ : \  \map(V,W) \ \to \  \map(V,\mR)
\]
are thus continuous as well.
A map $f:V\to W$ is a linear isometric embedding
if and only if $\tilde\alpha(f)$ and $\tilde\beta(f)$ are the zero maps.
So
\[ \bL(V,W) \ = \ \tilde\alpha^{-1}(0)\cap \tilde\beta^{-1}(0)\]
is a closed subset of $\map(V,W)$.
\end{proof}

Since $\bL(V,W)$ is defined as a subspace of the internal function space,
a map $f:A\to \bL(V,W)$ from a compactly generated space is continuous 
if and only if its adjoint
\[ f^\sharp \ : \ A\times V \ \to \ W \ , \quad  f^\sharp(a,v)\ = \ f(a)(v) \]
is continuous.

\begin{prop}\label{prop:L acts continuously}
For all inner product spaces $U, V$ and $W$, the composition map
\[ \circ \ : \ \bL(V,W) \times \bL(U,V) \ \to \ \bL(U,W) \]
is continuous.
\end{prop}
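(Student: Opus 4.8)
The plan is to use the adjunction characterization of continuity recorded just before the proposition: since $\bL(U,W)$ carries the subspace topology from $\map(U,W)$, a map from a compactly generated space $A$ into $\bL(U,W)$ is continuous if and only if its adjoint $A\times U\to W$ is continuous. First I would note that composition is well-defined as a map $\bL(V,W)\times\bL(U,V)\to\bL(U,W)$, because a composite of linear isometric embeddings is again a linear isometric embedding. Taking $A=\bL(V,W)\times\bL(U,V)$, the continuity of $\circ$ is then equivalent to the continuity of its adjoint
\[ \big(\bL(V,W)\times\bL(U,V)\big)\times U \ \to \ W\ ,\quad ((g,f),u)\ \longmapsto\ g(f(u))\ . \]

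The key tool is the exponential law $\map(X\times Y,Z)\iso\map(X,\map(Y,Z))$ recorded in this appendix: applying the inverse homeomorphism to the identity of $\map(Y,Z)$ exhibits the evaluation map $\map(Y,Z)\times Y\to Z$ as continuous. Restricting these evaluation maps along the subspace inclusions $\bL(U,V)\subseteq\map(U,V)$ and $\bL(V,W)\subseteq\map(V,W)$ yields continuous maps
\[ e_1\colon \bL(U,V)\times U\to V\ ,\quad (f,u)\mapsto f(u)\ , \qquad
   e_2\colon \bL(V,W)\times V\to W\ ,\quad (g,v)\mapsto g(v)\ . \]
I would then factor the adjoint of $\circ$ as the composite
\[ \big(\bL(V,W)\times\bL(U,V)\big)\times U \ \xra{\ \iso\ } \
   \bL(V,W)\times\big(\bL(U,V)\times U\big) \ \xra{\ \Id\times e_1\ } \
   \bL(V,W)\times V \ \xra{\ e_2\ } \ W\ , \]
where the first arrow is the associativity homeomorphism for the Kelleyfied product in $\bT$, the second is a product of continuous maps (using that $\times$ is a bifunctor on $\bT$), and the third is $e_2$. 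Each factor is continuous, hence so is the composite; by the adjunction criterion this shows that $\circ$ is continuous.

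The only points requiring care — and the closest thing to an obstacle — concern the bookkeeping around the compactly generated (Kelleyfied) product: one must invoke that this product is associative up to natural homeomorphism, that $\Id\times e_1$ is continuous for it, and that the restriction of a continuous evaluation map to a subspace stays continuous. All of these are formal consequences of $\bT$ being cartesian closed with internal hom $\map(-,-)$, exactly as set up earlier in this appendix, so no genuine point-set difficulty arises. The actual content of the proposition is just the exponential law together with the continuity of evaluation.
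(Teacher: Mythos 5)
Your proof is correct and follows essentially the same route as the paper's: both rest on the continuity of the evaluation maps $\map(X,Y)\times X\to Y$ together with the exponential law, the only (immaterial) difference being that the paper first shows the composition map $\map(V,W)\times\map(U,V)\to\map(U,W)$ on full mapping spaces is continuous as the adjoint of $\ev_{V,W}\circ(\map(V,W)\times\ev_{U,V})$ and then restricts to the closed subspaces of linear isometric embeddings, whereas you restrict the evaluation maps first. Your care about the adjunction criterion for maps into $\bL(U,W)$ is justified by the fact, established just before the proposition, that $\bL(U,W)$ is closed in $\map(U,W)$ and carries the subspace topology.
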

\begin{proof}
For all compactly generated spaces $X$ and $Y$,
the evaluation map $\ev_{X,Y}:\map(X,Y)\times X\to Y$
is continuous. So  the composite
\begin{equation}\label{eq:adjoint composition}
   \map(V,W) \times \map(U,V) \times U\ \xra{\map(V,W)\times \ev_{U,V}} \
   \map(V,W) \times V\ \xra{\ \ev_{V,W}\ } \ W 
\end{equation}
is continuous. The composition map
\[ \circ \ : \ \map(V,W) \times \map(U,V) \ \to \ \map(U,W) \]
is adjoint to \eqref{eq:adjoint composition}, so it is continuous as well. 
The claim now follows by restricting 
to subspaces of linear isometric embeddings.
\end{proof}

We write~$\mR^\infty$ for the $\mR$-vector space of those sequences
$(x_1,x_2,x_3,\dots)$ of real numbers such that almost all coordinates are zero.
A scalar product on~$\mR^\infty$ is given by the familiar formula
\[ \td{x,y}\  = \ {\sum}_{n\geq 1} \ x_n\cdot y_n \ .  \]
So~$\mR^\infty$ is of countably infinite dimension, and the standard basis
(i.e., the sequences with one coordinate~1 and all other coordinates~0) is an orthonormal basis. 
We identify $\mR^n$ with the subspace of $\mR^\infty$ consisting 
of those tuples $(x_1,x_2,x_3,\dots)$ such that $x_i=0$ for all $i> n$;
then $\mR^\infty$ carries the weak topology with respect to
the nested subspaces $\mR^n$.

We will also want to know that whenever $V$ is finite-dimensional, 
then the function space topology on $\bL(V,W)$
is in fact the familiar `Stiefel manifold topology'. 
The following proposition is needed for this identification;
it is a special case of Proposition 9.5 in \cite[App.\,A]{lewis-thesis}.

\begin{prop}\label{prop:mapping commute}
For every compact topological space $K$ the canonical map  
\[ \kappa \ : \ \colim_{n\geq 0}  \map(K,\mR^n)\ \to \ \map(K,\mR^\infty)\]
is a homeomorphism.
\end{prop}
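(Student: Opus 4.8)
The plan is to exhibit $\kappa$ as a continuous bijection whose inverse is continuous, so that the content splits into three parts: bijectivity of $\kappa$ on underlying sets, continuity of $\kappa$, and continuity of $\kappa^{-1}$. The geometric input driving everything is the standard fact that every compact subset of $\mR^\infty$ is already contained in one of the finite stages $\mR^n$: since $\mR^\infty$ carries the colimit topology along the closed embeddings $\mR^n\hookrightarrow\mR^{n+1}$ and is Hausdorff, a compact subset meeting infinitely many of the sets $\mR^{n}\setminus\mR^{n-1}$ would contain an infinite closed discrete subspace, which is impossible. I would isolate this finite-stage property first and then feed it into the two continuity arguments.

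Bijectivity and continuity of $\kappa$ are the routine part. The transition maps $\map(K,\mR^n)\to\map(K,\mR^{n+1})$ are injective (post-composition with the injection $\mR^n\hookrightarrow\mR^{n+1}$), so the underlying set of the colimit is the union $\bigcup_n\map(K,\mR^n)$ sitting inside $\map(K,\mR^\infty)$, and $\kappa$ is simply the inclusion of this union. Surjectivity is exactly the finite-stage property: a continuous $f\colon K\to\mR^\infty$ has compact image, hence lands in some $\mR^n$, and since $\mR^n$ is a subspace of $\mR^\infty$, it corestricts to a continuous map $K\to\mR^n$. Thus $\kappa$ is a bijection. Continuity of $\kappa$ is immediate from the universal property of the colimit: the maps $\map(K,\iota_n)\colon\map(K,\mR^n)\to\map(K,\mR^\infty)$ induced by the inclusions $\iota_n\colon\mR^n\hookrightarrow\mR^\infty$ form a compatible cone in $\bT$, so they factor through a continuous map out of the colimit, which is $\kappa$.

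The crux is the continuity of $\kappa^{-1}$. Here I would exploit that $\map(K,\mR^\infty)$ is an object of $\bT$, hence a $k$-space, so it suffices to check that $\kappa^{-1}\circ g$ is continuous for every compact Hausdorff space $T$ and every continuous map $g\colon T\to\map(K,\mR^\infty)$. By the exponential law recalled in the excerpt, such a $g$ is adjoint to a continuous map $\hat g\colon T\times K\to\mR^\infty$. The product $T\times K$ is compact, so $\hat g$ has compact image and therefore factors through some $\mR^n$ by the finite-stage property; re-adjointing the resulting continuous map $T\times K\to\mR^n$ shows that $g$ factors as a continuous map $T\to\map(K,\mR^n)$ followed by $\map(K,\iota_n)$. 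Composing with the colimit structure map $\map(K,\mR^n)\to\colim_m\map(K,\mR^m)$ then exhibits $\kappa^{-1}\circ g$ as a continuous map, as required.

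The main obstacle is precisely this last step, which is where the hypothesis that $K$ is compact is used in an essential way: in general $\map(K,-)$ is only a right adjoint and need not commute with colimits, and what rescues the situation is that $T\times K$ is again compact, so the finite-stage property applies to the adjoint $\hat g$. I expect the only other point requiring care is the set-level identification of the colimit with $\bigcup_n\map(K,\mR^n)$ together with the check that the colimit topology formed in $\bT$ is the final topology being probed against; both are standard once the transition maps are recognized as closed embeddings.
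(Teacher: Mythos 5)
Your proposal is correct and follows essentially the same route as the paper: both establish that $\kappa$ is a continuous bijection via the fact that maps from compact spaces into $\mR^\infty$ factor through a finite stage, and both verify continuity of $\kappa^{-1}$ by probing with a map $g$ from a compact space, passing to the adjoint $\hat g\colon T\times K\to\mR^\infty$, using compactness of $T\times K$ to factor through some $\mR^n$, and re-adjointing. The only difference is that you sketch a proof of the finite-stage property inline, whereas the paper cites it (Lewis's thesis, App.\,A, Lemma 9.4, or \cite[Prop.\,A.15\,(i)]{schwede-global}).
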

\begin{proof}
Since $\mR^\infty$ is a colimit of the sequence of closed embeddings
$\mR^n\to\mR^{n+1}$ between compactly generated spaces, 
every continuous map $f:K\to \mR^\infty$ factors  through $\mR^n$ for some $n\geq 0$,
compare \cite[App.\,A, Lemma 9.4]{lewis-thesis} or \cite[Prop.\,A.15 (i)]{schwede-global}.
So the canonical map $\kappa$ is a continuous bijection.

It remains to show that the inverse of the canonical map is continuous.
Since source and target of $\kappa$ are $k$-spaces, continuity of $\kappa^{-1}$
can be probed by continuous maps from compact spaces,
i.e., it suffices to show that for every continuous map $f:L\to \map(K,\mR^\infty)$
from a compact space the composite $\kappa^{-1}\circ f$ is continuous.
For this it is enough to show that $f$ factors through $\map(K,\mR^n)$ for some $n\geq 0$.

This, however, is now easy: we let $f^\sharp:L\times K\to \mR^\infty$ 
be the continuous adjoint of $f$, defined by $f^\sharp(l,k)=f(l)(k)$. 
Since $L$ and $K$ are compact, so is $L\times K$, and so $f^\sharp$ 
factors through $\mR^n$ for  some $n\geq 0$.
Hence $f$ factors through $\map(K,\mR^n)$, and this completes the proof.
\end{proof}

\begin{prop}\label{prop:properties of L}
Let $V$ and $W$ be inner product spaces.
\begin{enumerate}[\em (i)]
  \item If $V$ and $W$ are finite-dimensional, then
    $\bL(V,W)$ is homeomorphic to the Stiefel manifold 
    of $\dim(V)$-frames in $W$, and hence compact.
  \item If $V$ is finite-dimensional, then
    $\bL(V,\mR^\infty)$ carries the weak topology with respect to
    the filtration by the compact closed subspaces $\bL(V,\mR^n)$.
  \item
    Let 
    \[ V_1\ \subset\ V_2\ \subset\ \dots\ \subset\ V_n\ \subset\ \dots  \]
    be a nested exhausting sequence of finite-dimensional subspaces of $\mR^\infty$.
    Then $\bL(\mR^\infty,\mR^\infty)$ is an inverse limit, in the category of compactly
    generated spaces, of the tower of restriction maps
    \[
    \dots\ \to\ \bL(V_n,\mR^\infty)\ \to\ \dots \ \to\  \bL(V_2,\mR^\infty)\ \to \ 
    \bL(V_1,\mR^\infty)\ . \]
  \end{enumerate}
\end{prop}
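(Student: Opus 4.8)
The plan is to treat the three parts in turn; the common thread is to control the function-space topology through the internal-hom adjunction $-\times A\dashv\map(A,-)$ together with the colimit description of $\mR^\infty$. For part (i), set $k=\dim V$ and fix an orthonormal basis $e_1,\dots,e_k$ of $V$. Evaluation at the basis gives a map $\bL(V,W)\to W^k$, $f\mapsto(f(e_1),\dots,f(e_k))$, which is continuous (each evaluation is) and injective with image exactly the Stiefel manifold of orthonormal $k$-frames. For the inverse I would check that the assignment sending a frame $(w_1,\dots,w_k)$ to the linear map $\sum a_i e_i\mapsto\sum a_i w_i$ is continuous; its adjoint $W^k\times V\to W$, $((w_i),v)\mapsto\sum\langle v,e_i\rangle w_i$, is a finite sum of manifestly continuous maps, so continuity follows from the adjunction, and the image lands in $\bL(V,W)$. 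Thus the evaluation map is a homeomorphism onto the Stiefel manifold, which for finite-dimensional $W$ is a closed bounded subset of $W^k$ and hence compact.

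For part (ii), the argument of part (i) applies verbatim with $W=\mR^\infty$ (only compactness used finiteness of $W$), identifying $\bL(V,\mR^\infty)$ homeomorphically with the subspace $A\subseteq(\mR^\infty)^k$ of orthonormal $k$-frames. Proposition \ref{prop:mapping commute}, applied to a $k$-point space, gives $(\mR^\infty)^k=\colim_n(\mR^n)^k$ in $\bT$. The conditions $\langle w_i,w_j\rangle=\delta_{ij}$ cut out $A$ as a closed subspace, the inner product being continuous on $\mR^\infty\times\mR^\infty=\colim_n(\mR^n\times\mR^n)$, and $A\cap(\mR^n)^k$ is precisely the frames in $(\mR^n)^k$, i.e.\ $\bL(V,\mR^n)$ by part (i). I would then invoke the standard fact that a closed subspace of a colimit along closed embeddings of compactly generated spaces is the colimit of its intersections: if $X=\colim_n X_n$ and $A\subseteq X$ is closed, then a set $C\subseteq A$ with every $C\cap X_n$ closed has every $C\cap X_n$ closed in $X_n$, hence $C$ is closed in $X$ and therefore in $A$. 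This yields $\bL(V,\mR^\infty)=\colim_n\bL(V,\mR^n)$, with each $\bL(V,\mR^n)$ compact and closed by part (i).

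For part (iii), the key input is $\mR^\infty=\colim_n V_n$ in $\bT$: the $V_n$ are cofinal among the finite-dimensional subspaces of $\mR^\infty$ (each such subspace is spanned by finitely many vectors, each lying in some $V_n$), so the colimit along the $V_n$ recovers the weak topology defining $\mR^\infty$. Because $Z\times-$ preserves colimits in $\bT$, the internal mapping space turns this colimit into a limit, giving a homeomorphism $\map(\mR^\infty,\mR^\infty)\cong\lim_n\map(V_n,\mR^\infty)$, $\phi\mapsto(\phi|_{V_n})_n$. Since $\bL(V_n,\mR^\infty)$ is a closed subspace of $\map(V_n,\mR^\infty)$ for each $n$ (by the closedness result established earlier in this appendix) and a tower of embeddings induces an embedding on limits, the subset $\lim_n\bL(V_n,\mR^\infty)\subseteq\lim_n\map(V_n,\mR^\infty)$ carries the limit topology of the tower of restriction maps. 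Under the homeomorphism above this subset is exactly the set of linear isometric embeddings $\mR^\infty\to\mR^\infty$: a compatible family $(\phi_n)$ glues to a linear map, which is automatically continuous and visibly isometric, and conversely restriction sends $\bL(\mR^\infty,\mR^\infty)$ into the limit. Hence the defining subspace topology on $\bL(\mR^\infty,\mR^\infty)$ agrees with the inverse limit topology.

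The main obstacle I anticipate is not conceptual but point-set-topological: matching the function-space topology with the relevant colimit or limit topology \emph{inside} the category $\bT$, where Kelleyfication can spoil naive product and subspace arguments. Concretely, the two delicate points are the closed-subspace-of-colimit lemma used in (ii) and the identification of the subspace topology on $\lim_n\bL(V_n,\mR^\infty)$ with the limit topology in (iii); both rest on Proposition \ref{prop:mapping commute} and on the good behaviour of the internal hom under colimits, which is exactly why that proposition was isolated.
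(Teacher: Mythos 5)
Your proof is correct, but parts of it take a genuinely different route from the paper's. In part (i) the paper also compares the two topologies via evaluation, but instead of constructing a continuous inverse it notes that the inclusion $\bL^{St}(V,W)\to\map(V,W)$ is a continuous injection from a compact space into a Hausdorff one, hence a closed embedding; your explicit inverse via the adjoint $W^k\times V\to W$ is slightly more work here but buys exactly what you exploit in part (ii), namely that the identification of $\bL(V,W)$ with the frame space persists for $W=\mR^\infty$, where the compactness trick is unavailable (the paper's proof of (i) would \emph{not} extend verbatim, so your internal consistency matters). Part (ii) is where you diverge most: the paper restricts isometries to the unit sphere $S(V)$, applies Proposition~\ref{prop:mapping commute} with $K=S(V)$, and shows that a subset $A$ with closed intersections $A\cap\bL(V,\mR^n)$ is closed by pushing it forward along the injection into $\map(S(V),\mR^\infty)$ — using compactness of $\bL(V,\mR^n)$ to see the images are closed — whereas you apply Proposition~\ref{prop:mapping commute} to a finite set, realize $\bL(V,\mR^\infty)$ as the closed subspace of orthonormal frames in $(\mR^\infty)^k$, and invoke the lemma that a closed subspace of a sequential colimit along closed embeddings carries the colimit topology of its intersections; both are valid, and yours avoids the image argument at the cost of checking continuity of the algebraic operations on $\mR^\infty$ inside $\bT$, which you handle correctly via the colimit presentation of the Kelleyfied products. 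In part (iii) the first half agrees with the paper ($\map(-,Z)$ is a left adjoint into $\bT^{\op}$), but for the second half the paper verifies the universal property of the limit directly against test objects $T\in\bT$, which sidesteps all point-set subtleties, while your identification of the subspace topology on $\bL(\mR^\infty,\mR^\infty)$ with the limit topology silently uses that $\bT$-limits are Kelleyfications of topological limits and that Kelleyfication is compatible with passage to \emph{closed} subspaces; your phrase ``a tower of embeddings induces an embedding on limits'' is only safe because the subtower limit is closed in the topological limit (closedness of each $\bL(V_n,\mR^\infty)$ in $\map(V_n,\mR^\infty)$), so you should make that point explicit and cite the standard facts, e.g. \cite[App.\,A]{lewis-thesis} or \cite[App.\,A]{schwede-global}. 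With that reference spelled out, your argument is complete.
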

\begin{proof}
(i) We denote by $\bL^{St}(V,W)$ the set of linear isometric embeddings
endowed with the Stiefel manifold topology, making it a closed manifold.
The evaluation map
  \[ \bL^{St}(V,W)\times V \ \to \ W \]
is continuous, hence so is its adjoint, the inclusion
\[ \bL^{St}(V,W) \ \to \ \map(V,W)\ . \]
Since $\bL^{St}(V,W)$ is compact and $\map(V,W)$ is a Hausdorff space,
the inclusion is a closed embedding. So the Stiefel manifold
topology coincides with the subspace topology of the function topology.

(ii)
We let $A$ be a subset of $\bL(V,\mR^\infty)$ such that
$A\cap \bL(V,\mR^n)$ is closed in $\bL(V,\mR^n)$ for all $n\geq 0$;
we wish to show that then $A$ is itself closed.
For this purpose we let $\rho:\bL(V,\mR^\infty)\to\map(S(V),\mR^\infty)$
denote the composite
\[ 
 \bL(V,\mR^\infty)\ \xra{\text{incl}}\ \map(V,\mR^\infty) \ 
\ \xra{\text{restr}}\ \map(S(V),\mR^\infty) \ ,
 \]
where the last map is restriction to the unit sphere $S(V)$.
We observe that
\begin{equation} \label{eq:intersection}
 \rho(A)\cap \map(S(V),\mR^n)\ = \ \rho_n(A\cap \bL(V,\mR^n)) \ ,  
\end{equation}
where
\[ \rho_n\ : \  \bL(V,\mR^n)\ \to \ \map(S(V),\mR^n)\]
is restriction to the unit sphere.
The space $\bL(V,\mR^n)$ is compact by part (i); so the closed subset
$A\cap \bL(V,\mR^n)$ is itself compact.
Then $\rho_n(A\cap\bL(V,\mR^n))$ is a quasi-compact subset of the Hausdorff space
$\map(S(V),\mR^n)$, and hence closed.
So $\rho(A)\cap \map(S(V),\mR^n)$ is closed for all $n\geq 0$, by \eqref{eq:intersection}.
Since the unit sphere $S(V)$ is compact, 
Proposition~\ref{prop:mapping commute} shows that the space
$ \map(S(V),\mR^\infty)$ has the weak topology
with respect to the filtration by the closed subspaces  $\map(S(V),\mR^n)$.
So the set $\rho(A)$ is closed in $\map(S(V),\mR^\infty)$.
Since $\rho$ is continuous and injective, the set
\[ A \ = \ \rho^{-1}(\rho(A)) \]
is closed in $\bL(V,\mR^\infty)$.

(iii)
The category of compactly generated spaces is cartesian closed, with $\map(-,-)$ 
as the internal function object. 
So the natural bijections
\begin{align*}
\bT^{\op}(\map(Y,Z),X)\ = \ \bT(X,\map(Y,Z))\ \iso \ 
\bT(X\times Y,Z)\ \iso \ \bT(Y,\map(X,Z))  
\end{align*}
show that for every compactly generated space $Z$, the functor
\[ \map(-,Z)\ : \ \bT\ \to \ \bT^{\op} \]
is a left adjoint. So $\map(-,Z)$ takes colimits in $\bT$ to 
colimits in $\bT^{\op}$, which are limits in $\bT$.
Since $\mR^\infty$ is a colimit of the sequence $\{V_n\}_{n\geq 0}$,
the space $\map(\mR^\infty,\mR^\infty)$ is an inverse limit of the tower
of spaces $\{\map(V_n,\mR^\infty)\}_{n\geq 0}$.

Now we can prove the claim. We let $f_n:T\to \bL(V_n,\mR^\infty)$
be a compatible family of continuous maps from a compactly generated space~$T$.
Then the composite maps
\[ T \ \xra{\ f_n \ }\ \bL(V_n,\mR^\infty) \ \xra{\text{incl}}\ \map(V_n,\mR^\infty)\]
are compatible. Since $\map(\mR^\infty,\mR^\infty)$
is an inverse limit of the tower, there is a unique continuous map
$f:T\to \map(\mR^\infty,\mR^\infty)$ such that the square
\[ \xymatrix{ 
T\ar[r]^-f \ar[d]_{f_n} & \map(\mR^\infty,\mR^\infty)\ar[d]^{\text{restr}} \\
\bL(V_n,\mR^\infty) \ar[r]_-{\text{incl}} & \map(V_n,\mR^\infty)}\]
commutes for all $n\geq 0$.
For every $t\in T$ the map $f(t):\mR^\infty\to\mR^\infty$
restricts to a linear isometric embedding on every finite-dimensional
subspace of $\mR^\infty$; so $f(t)$ is itself a linear isometric embedding.
Hence the map $f$ lands in the subspace $\bL(\mR^\infty,\mR^\infty)$
of $\map(\mR^\infty,\mR^\infty)$, and can be viewed as a continuous map
$f:T\to \bL(\mR^\infty,\mR^\infty)$ that restricts to the original map $f_n$
for all $n\geq 0$. This proves that  $\bL(\mR^\infty,\mR^\infty)$
has the universal property of an inverse limit 
of the tower of restriction maps
$\bL(V_n,\mR^\infty) \to\bL(V_{n-1},\mR^\infty)$.
\end{proof}

In this paper we are very much interested in subgroups of the linear
isometries monoid that happen to be compact Lie groups. 
The following proposition collects some useful facts about
continuous actions of compact groups by linear isometries on $\mR^\infty$.

\begin{prop}\label{prop:compact subgroups}
Let $G$ be a compact topological group acting continuously on $\mR^\infty$
by linear isometries.
\begin{enumerate}[\em (i)]
\item Every finite-dimensional subspace of $\mR^\infty$
is contained in a finite-dimensional $G$-invariant subspace.
\item The space $\mR^\infty$ is the orthogonal direct sum of finite-dimensional
$G$-invariant subspaces.
\end{enumerate}
\end{prop}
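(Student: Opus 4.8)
The plan is to deduce both parts from a single observation: for every vector $v\in\mR^\infty$ the linear span of the orbit $G\cdot v$ is finite-dimensional. Indeed, the orbit map $G\to\mR^\infty$, $g\mapsto g\cdot v$, is continuous, being the restriction of the continuous action to $G\times\{v\}$. Since $G$ is compact and $\mR^\infty$ carries the colimit topology of its finite-dimensional subspaces $\mR^n$, this map factors through some $\mR^n$ (by \cite[App.\,A, Lemma 9.4]{lewis-thesis} or \cite[Prop.\,A.15 (i)]{schwede-global}). Hence $G\cdot v\subseteq\mR^n$, so its linear span is finite-dimensional. This is the one point where compactness of $G$ and the weak topology on $\mR^\infty$ are both essential, and I expect it to be the main (indeed essentially the only nontrivial) step; everything else is elementary linear algebra combined with the fact that the orthogonal complement of a $G$-invariant subspace is again $G$-invariant, because $G$ acts by isometries.

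For part~(i) I would choose a basis $v_1,\dots,v_n$ of the given finite-dimensional subspace $V$ and let $W$ be the sum of the linear spans of the orbits $G\cdot v_i$. By the observation above each summand is finite-dimensional, so $W$ is finite-dimensional, and it contains each $v_i$ since $v_i\in G\cdot v_i$, hence contains $V$. As $g\cdot(G\cdot v_i)=G\cdot v_i$ for every $g\in G$, the span of each orbit is $G$-invariant, and therefore so is the sum $W$. This produces the required finite-dimensional $G$-invariant subspace containing $V$.

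For part~(ii) I would construct the decomposition by a countable induction over the standard basis $e_1,e_2,\dots$ of $\mR^\infty$. Suppose pairwise orthogonal finite-dimensional $G$-invariant subspaces $W_1,\dots,W_{k-1}$ have been built so that $U_{k-1}:=W_1\oplus\cdots\oplus W_{k-1}$ contains $e_1,\dots,e_{k-1}$. Since $U_{k-1}$ is $G$-invariant and $G$ acts by isometries, the orthogonal complement $U_{k-1}^\perp$ is $G$-invariant as well. Let $v_k$ be the orthogonal projection of $e_k$ onto $U_{k-1}^\perp$ and let $W_k$ be the linear span of $G\cdot v_k$. Then $W_k$ is finite-dimensional by the key observation, it is $G$-invariant, and it lies in $U_{k-1}^\perp$ (as $v_k\in U_{k-1}^\perp$ and the latter is $G$-invariant), so $W_k\perp W_j$ for all $j<k$. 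Moreover $e_k=(e_k-v_k)+v_k\in U_{k-1}\oplus W_k=U_k$.

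Finally I would observe that $\bigoplus_{k\ge 1}W_k$ is an orthogonal direct sum (by construction) of finite-dimensional $G$-invariant subspaces containing every basis vector $e_k$, hence equal to all of $\mR^\infty$. No topological-closure subtlety arises here: because $\mR^\infty$ consists of finitely supported sequences, the standard basis spans it algebraically, and each $e_k$ already lies in the finite stage $U_k$, so the algebraic direct sum genuinely exhausts $\mR^\infty$.
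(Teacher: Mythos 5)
Your proof is correct, and it takes a genuinely more elementary route than the paper's, though the two share the same skeleton (a compactness observation plus an induction with orthogonal complements). For part~(i) the paper does not work with individual orbit maps: it forms the adjoint $\rho\colon G\to\bL(V,\mR^\infty)$ of the action restricted to $G\times V$ and invokes Proposition~\ref{prop:properties of L}~(ii), which says that $\bL(V,\mR^\infty)$ carries the weak topology with respect to the closed subspaces $\bL(V,\mR^n)$; compactness of $\rho(G)$ then gives $G\cdot V\subseteq\mR^n$ uniformly, and the span of $G\cdot V$ is the desired invariant subspace. Your version instead applies the standard compact-into-colimit fact (the same one the paper cites in the proof of Proposition~\ref{prop:mapping commute}) to each orbit map $g\mapsto g\cdot v_i$ separately and sums the orbit spans over a basis; this bypasses the mapping-space topology result entirely, which is the most delicate point-set ingredient in the paper's argument, so your part~(i) rests on strictly less machinery. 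For part~(ii) the paper's induction invokes part~(i) at each stage (choose a finite-dimensional invariant envelope $V$ of $W_1,\dots,W_{n-1}$ and $\mR^n$, then take the orthogonal complement of the previous sum inside $V$), whereas you project $e_k$ onto the orthocomplement of the previously built sum and take the span of its orbit, never using part~(i) at all. Both inductions are valid; your justification that $W_k\subseteq U_{k-1}^\perp$ correctly uses $G$-invariance of the orthocomplement, which in turn uses that $G$ is a group acting by isometries, and the only cosmetic difference is that some of your $W_k$ may be the zero subspace (when $e_k\in U_{k-1}$), which is harmless. What the paper's formulation buys is the uniform statement $G\cdot V\subseteq\mR^n$ and consistency with the appendix's systematic development of the spaces $\bL(V,\Wc)$; what yours buys is a shorter chain of dependencies and a single reusable lemma (finite-dimensionality of orbit spans) driving both parts.
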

\begin{proof}
(i) We let $V$ be a finite-dimensional subspace of $\mR^\infty$
and denote by
\[ \rho \ : \ G \ \to \ \bL(V,\mR^\infty) \]
the adjoint to the action map 
\[ G\times V\ \to \ \mR^\infty \ , \quad (g,v)\ \longmapsto \ g v\ .\]
Since $G$ acts continuously on $\mR^\infty$, the adjoint $\rho$
is continuous with respect to the function space topology.
By Proposition \ref{prop:properties of L}~(ii), the space
$\bL(V,\mR^\infty)$ carries the weak topology with respect to
the nested sequence of closed subspaces $\bL(V,\mR^n)$. 
So the compact subset $\rho(G)$ is contained in $\bL(V,\mR^n)$
for some $n\geq 0$. In more concrete terms, this means that
 $G\cdot V\subseteq \mR^n$.
The $\mR$-linear span of the set $G\cdot V$ is then
the desired finite-dimensional $G$-invariant subspace.

(ii) 
We construct pairwise orthogonal finite-dimensional $G$-invariant subspaces $W_n$
such that $\mR^n$ is contained in the direct sum of $W_1,\dots,W_n$
for all $n\geq 1$.
The construction is inductive, starting with $W_0=0$. 
The inductive step uses part (i) to obtain a finite-dimensional 
$G$-invariant subspace $V$ 
of $\mR^\infty$ that contains $W_1,\dots, W_{n-1}$ and $\mR^n$.
Then we let $W_n$ be the orthogonal complement of $W_1\oplus\dots\oplus W_{n-1}$ in $V$.
Since $W_1,\dots,W_{n-1}$ are $G$-invariant and $G$ acts by linear isometries,
$W_n$ is again $G$-invariant.
Moreover, since $V$ is finite-dimensional, it is 
the orthogonal direct sum of $W_1\oplus\dots\oplus W_{n-1}$ and $W_n$.
Since the finite sums $W_1\oplus\dots\oplus W_n$  exhaust $\mR^\infty$, the latter is the
orthogonal direct sum of all the subspaces $W_n$ for $n\geq 1$.
\end{proof}

\begin{prop}\label{prop:characterize Lie}
Let $G$ be a compact subgroup of the linear isometries monoid $\Lc$.
Then $G$ admits the structure of a Lie group (necessarily unique)
if and only if there is a finite-dimensional 
$G$-invariant subspace of $\mR^\infty$ on which $G$ acts faithfully.
\end{prop}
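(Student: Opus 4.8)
The plan is to prove the two implications separately. The reverse direction is essentially immediate from Cartan's closed subgroup theorem, while the forward direction requires a finiteness argument that is the real content of the proposition.

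For the direction asserting that a faithful finite-dimensional invariant subspace yields a Lie structure, suppose $V\subseteq\mR^\infty$ is a finite-dimensional $G$-invariant subspace on which $G$ acts faithfully. Since $G$ acts by linear isometries, restricting the tautological action to $V$ produces a group homomorphism $G\to O(V)$ into the orthogonal group of $V$; it is continuous because the $\Lc$-action on $\mR^\infty$ is continuous (Proposition~\ref{prop:L acts continuously}), restricted to $V$, and $O(V)$ carries the Stiefel/Lie topology by Proposition~\ref{prop:properties of L}~(i). By assumption this homomorphism is injective, and since $G$ is compact and $O(V)$ is Hausdorff, it is a closed embedding onto a compact, hence closed, subgroup of the compact Lie group $O(V)$. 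Cartan's closed subgroup theorem then makes the image a Lie subgroup, and transporting this structure along the isomorphism equips $G$ with a Lie group structure.

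For the converse, the key input is Proposition~\ref{prop:compact subgroups}~(ii): the tautological $G$-action decomposes $\mR^\infty$ as an orthogonal direct sum $\bigoplus_{n\geq 1}W_n$ of finite-dimensional $G$-invariant subspaces. Writing $\rho_n:G\to O(W_n)$ for the associated representations and $K_n=\ker\rho_1\cap\dots\cap\ker\rho_n$, I obtain a descending chain of closed normal subgroups of $G$. Because $G\hookrightarrow\Lc$ acts faithfully on all of $\mR^\infty$, the total intersection $\bigcap_n K_n$ is trivial. The goal then reduces to showing that some $K_N$ is already trivial, since in that case $G$ acts faithfully on the finite-dimensional $G$-invariant subspace $W_1\oplus\dots\oplus W_N$, which is exactly what is required.

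This last reduction is where the Lie hypothesis on $G$ enters, and I expect it to be the main obstacle. The plan is to show that any descending chain of closed subgroups of a compact Lie group stabilizes: the dimensions $\dim K_n$ are non-increasing and bounded below, so eventually constant; once $\dim K_{n+1}=\dim K_n$ the inclusion $K_{n+1}^\circ\subseteq K_n^\circ$ of connected closed subgroups of equal dimension forces $K_{n+1}^\circ=K_n^\circ$ (a full-dimensional connected closed subgroup is open, hence everything); and thereafter the finite quotients $\pi_0(K_n)=K_n/K_n^\circ$ form a nested descending chain of finite groups, whose orders must stabilize. Hence $K_n=K_N$ for all large $n$, and combined with $\bigcap_n K_n=\{e\}$ this yields $K_N=\{e\}$. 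The delicate points will be the dimension-and-component bookkeeping and the repeated appeal to the closed subgroup theorem guaranteeing that each $K_n$ is genuinely a Lie subgroup; the parenthetical uniqueness of the Lie structure is a standard consequence of the automatic smoothness of continuous homomorphisms between Lie groups and needs only to be cited.
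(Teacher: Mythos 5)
Your proposal is correct and follows essentially the same route as the paper's proof: restriction to $V$ plus Cartan's closed subgroup theorem for one direction, and the decomposition of Proposition~\ref{prop:compact subgroups}~(ii) together with the descending chain condition for closed subgroups of a compact Lie group for the other. The only difference is that you spell out a (correct) proof of the chain stabilization via dimension and component counting, whereas the paper simply asserts this standard fact.
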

\begin{proof}
We start by assuming that there is 
a $G$-invariant finite-dimensional subspace $V$ of $\mR^\infty$
on which $G$ acts faithfully.
Then the continuous composite
\[ G \ \xra{\text{incl}}  \ \Lc \ = \ \bL(\mR^\infty,\mR^\infty)\
\xra{\text{restr}}\ \bL(V,\mR^\infty) \]
factors through an injective continuous group homomorphism
\[ \rho\ : \ G \ \to \ \bL(V,V) \]
that encodes the $G$-action on $V$.
By Proposition \ref{prop:properties of L}~(i), the space
$\bL(V,V)$ carries the Stiefel manifold topology, i.e., 
$\bL(V,V)=O(V)$ is the orthogonal group of $V$ with the Lie group
topology. Since $G$ is compact, the homomorphism $\rho:G\to O(V)$
is a closed map, hence a homeomorphism onto its image, which is a closed
subgroup of $O(V)$. Every closed subgroup of a Lie group carries
the structure of a Lie group, compare \cite[Ch.\,I, Thm.\,3.11]{broecker-tomDieck};
so $G$ admits the structure of a Lie group. 
A topological group admits at most one structure of Lie group, see for example 
\cite[Ch.\,I, Prop.\,3.12]{broecker-tomDieck}; 
so the Lie group structure is necessarily unique.

Now we suppose that conversely $G$ admits the structure of a Lie group.
Proposition \ref{prop:compact subgroups}~(ii) 
provides finite-dimensional pairwise orthogonal $G$-invariant subspaces $W_n$ that
together span $\mR^\infty$.
We let 
\[ H_n \ = \ \{g\in G\ : \ g v = v \text{ for all $v\in W_1\oplus \dots\oplus W_n$}\} \]
denote the kernel of the $G$-action on $W_1\oplus\dots\oplus W_n$;
this is a closed normal subgroup of $G$ with $H_{n+1}\subseteq H_n$. 
In a compact Lie group, every infinite
descending chain of closed subgroups is eventually constant.
So there is an $m\geq 0$ such that $H_n=H_m$ for all $n\geq m$.
Since the subspaces $W_n$ span $\mR^\infty$, the subgroup $H_m$ is the trivial subgroup.
Hence $W_1\oplus\dots\oplus W_m$ 
is the desired finite-dimensional faithful $G$-invariant subspace of $\mR^\infty$.
\end{proof}

\begin{eg}\label{eg:not Lie}
The linear isometries monoid $\Lc$ has compact subgroups that are not Lie groups,
and for which $\mR^\infty$ has no faithful finite-dimensional subrepresentation.
As an example we consider the unitary representation $\mC(n)$ of the 
additive group $\mZ^\wedge_p$ of $p$-adic integers
on the complex numbers through the finite quotient $\mZ/p^n\mZ$,
with a generator of $\mZ/p^n\mZ$ acting by multiplication by $e^{2\pi i/ p^n}$.
Then the direct sum action of $\mZ^\wedge_p$ on 
$\bigoplus_{n\geq 1} \mC(n)$ is faithful and through $\mC$-linear isometries.
An identification of the underlying $\mR$-vector space of 
$\bigoplus_{n\geq 1} \mC(n)$ with $\mR^\infty$ turns this into a faithful,
continuous isometric action of $\mZ^\wedge_p$ on $\mR^\infty$.
The image of this action is then a compact subgroup of $\Lc$ isomorphic to $\mZ^\wedge_p$.
\end{eg}

\begin{prop}\label{prop:EKG infinite} 
Let $G$ and $K$ be compact Lie groups,
$\Vc$ an orthogonal $G$-representation of countably infinite dimension 
and $\Uc_K$ a complete $K$-universe.
For every faithful finite-dimensional $G$-subrepresentation~$V$ of $\Vc$ 
the restriction morphism
\[ \rho^{\Vc}_V  \ : \ \bL(\Vc,\Uc_K) \ \to \ \bL(V,\Uc_K)  \]
is a $(K\times G)$-homotopy equivalence.
\end{prop}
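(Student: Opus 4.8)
The plan is to identify both $\bL(\Vc,\Uc_K)$ and $\bL(V,\Uc_K)$ as models for the universal space of a single family of subgroups of $\Gamma=K\times G$, and to deduce that the $\Gamma$-map $\rho^\Vc_V$ between them is a $\Gamma$-homotopy equivalence. Here $\Gamma$ acts by $(k,g)\cdot\varphi=k\circ\varphi\circ g^{-1}$, and $\rho^\Vc_V$ is the evidently $\Gamma$-equivariant restriction of an embedding to the subspace $V$. By the equivariant Whitehead theorem it then suffices to show, on the one hand, that $\rho^\Vc_V$ induces a weak homotopy equivalence on $\Lambda$-fixed points for every closed subgroup $\Lambda\le\Gamma$, and, on the other hand, that both spaces have the $\Gamma$-homotopy type of a $\Gamma$-CW complex.

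The fixed-point analysis is the conceptual core. Let $\Fc$ be the family of closed subgroups $\Lambda\le\Gamma$ with $\Lambda\cap(\{e\}\times G)=\{e\}$; since $\{e\}\times G$ is normal in $\Gamma$, this is closed under conjugation and passage to subgroups. If $\Lambda\notin\Fc$, choose $(e,g)\in\Lambda$ with $g\ne e$; any $\Lambda$-fixed $\varphi$ would satisfy $\varphi(gv)=\varphi(v)$ for all $v$, forcing $gv=v$ by injectivity, which contradicts faithfulness of $V\subseteq\Vc$. Hence both fixed-point spaces are empty for such $\Lambda$. If $\Lambda\in\Fc$, then $\Lambda$ is the graph of a continuous homomorphism $\beta\colon L\to G$ with $L=\mathrm{pr}_K(\Lambda)\le K$, and the fixed-point equation $\varphi(\beta(l)v)=l\varphi(v)$ identifies
\[ \bL(\Vc,\Uc_K)^\Lambda=\bL(\beta^*\Vc,\mathrm{res}^K_L\Uc_K)^L\quad\text{and}\quad \bL(V,\Uc_K)^\Lambda=\bL(\beta^*V,\mathrm{res}^K_L\Uc_K)^L, \]
the spaces of $L$-equivariant linear isometric embeddings into $\mathrm{res}^K_L\Uc_K$. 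The restriction of a complete universe to a closed subgroup is again complete (every irreducible $L$-representation occurs in the restriction of some irreducible $K$-representation, and each of these occurs infinitely often in $\Uc_K$), so $\mathrm{res}^K_L\Uc_K$ is a complete $L$-universe; and the space of $L$-equivariant isometric embeddings of any $L$-representation of finite or countably infinite dimension into a complete $L$-universe is contractible (the self-embedding statement is \cite[II Lemma 1.5]{lms}, and the general source is handled in the same way). Both fixed-point spaces are therefore contractible, so $(\rho^\Vc_V)^\Lambda$ is a weak equivalence, and each of the two spaces is a model for the universal $\Fc$-space $E_\Gamma\Fc$.

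The remaining, and most technical, point is to supply the $\Gamma$-CW homotopy type. For a finite-dimensional faithful source this is straightforward: choosing, via Proposition \ref{prop:compact subgroups}~(ii), a nested exhaustion of $\Uc_K$ by finite-dimensional $K$-invariant subspaces $W\nearrow\Uc_K$, the space $\bL(V,\Uc_K)$ is the colimit along closed embeddings of the compact $\Gamma$-Stiefel manifolds $\bL(V,W)$, each of which is a $\Gamma$-CW complex (for instance by Illman's equivariant triangulation theorem), so $\bL(V,\Uc_K)$ is $\Gamma$-CW, and likewise $\bL(V',\Uc_K)$ for any finite-dimensional faithful $V'$. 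For the infinite-dimensional $\Vc$ I would exhaust it by finite-dimensional faithful $G$-subrepresentations $V=V_0\subseteq V_1\subseteq\cdots$ and use Proposition \ref{prop:properties of L}~(iii) to write $\bL(\Vc,\Uc_K)$ as the inverse limit of the tower of restriction maps $\bL(V_{n+1},\Uc_K)\to\bL(V_n,\Uc_K)$. Each bonding map is a $\Gamma$-fibration and, by the finite-dimensional case together with the fixed-point computation, a $\Gamma$-homotopy equivalence; a tower of $\Gamma$-Hurewicz fibrations with $\Gamma$-homotopy-equivalence bonding maps has inverse limit $\Gamma$-homotopy equivalent to its bottom stage, by a compatible assembly of the fiber-homotopy trivialisations furnished by Dold's theorem. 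I expect this last step --- verifying that the bonding maps are genuine $\Gamma$-fibrations and controlling the passage to the inverse limit --- to be the main obstacle; once it is in place, $\rho^\Vc_V$ is a $\Gamma$-map between two copies of $E_\Gamma\Fc$ and is therefore a $\Gamma$-homotopy equivalence.
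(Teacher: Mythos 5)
Your proposal is essentially correct, and its core --- exhausting $\Vc$ by finite-dimensional faithful $G$-subrepresentations starting with $V$, computing fixed points under graph subgroups (empty when $\Lambda\cap(1\times G)\neq 1$ by faithfulness, contractible otherwise via \cite[II Lemma 1.5]{lms} applied to the restricted complete universe), and writing $\bL(\Vc,\Uc_K)$ as the inverse limit of the restriction tower via Proposition~\ref{prop:properties of L}~(iii) --- is exactly the paper's argument. The difference is in the packaging of the two steps you flag as technical. The paper never invokes universal $\Fc$-spaces, Illman triangulation, or the equivariant Whitehead theorem: it records that each bonding map $p_n$ is an acyclic fibration in the projective model structure on $(K\times G)$-spaces (on fixed points: a weak equivalence between contractible or empty spaces, and a locally trivial bundle, hence a Serre fibration), that each stage $\bL(V_n,\Uc_K)$ is cofibrant by \cite[Prop.\,1.1.19 (ii)]{schwede-global}, and then applies the general Proposition~\ref{prop:inverse limit homotopy}: in a topological model category, the limit of a tower of acyclic fibrations between cofibrant objects maps to the bottom stage by a genuine homotopy equivalence. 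That lemma is precisely your ``main obstacle,'' and the paper proves it by obtaining a section $s_n$ and a vertical homotopy $\mathrm{id}\simeq s_n\circ p_n$ by lifting against cofibrations, followed by the shrinking-interval assembly you anticipate. This formulation buys two simplifications over your sketch: cofibrancy of the stages substitutes for Dold's theorem, so you never need the bonding maps to be genuine $\Gamma$-Hurewicz fibrations (strictly harder to verify than Serre fibrations on fixed points); and since $V_0=V$, the tower lemma already exhibits $\rho^\Vc_V$ as the bottom projection, making the closing Whitehead-style appeal to ``a $\Gamma$-map between two copies of $E_\Gamma\Fc$'' (which would anyway require the inverse limit to have the $\Gamma$-homotopy type of a $\Gamma$-CW complex) unnecessary. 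One minor caution on your finite-dimensional step: a colimit of $\Gamma$-CW complexes along closed embeddings need not be $\Gamma$-CW on the nose; the inclusions are $\Gamma$-cofibrations, so the colimit has the $\Gamma$-homotopy type of a $\Gamma$-CW complex, or one simply cites cofibrancy as the paper does.
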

\begin{proof}
We choose an exhausting nested sequence
\[ V\ = \ V_0 \ \subset \ V_1 \ \subset \ V_2 \ \dots \]
of finite-dimensional $G$-subrepresentations of $\Vc$, starting with the
given faithful representation.
We claim that all the restriction maps
\[ p_n \ : \ \bL(V_n,\Uc_K)\ \to \ \bL(V_{n-1},\Uc_K) \]
are $(K\times G)$-acyclic fibrations, i.e., for every closed subgroup
$\Gamma\leq K\times G$ the fixed point map
\[ (p_n)^\Gamma \ : \ \bL(V_n,\Uc_K)^\Gamma\ \to \ \bL(V_{n-1},\Uc_K)^\Gamma \]
is a weak equivalence and Serre fibration.
Since $G$ acts faithfully on $V_n$,
the $\Gamma$-fixed points of source and target are empty
whenever $\Gamma\cap (1\times G)\ne\{(1,1)\}$ .
Otherwise~$\Gamma$ is the graph 
of a continuous homomorphism $\alpha:L\to G$ defined on a closed subgroup $L$ of $K$.
So the fixed point map is the restriction map
\[ (p_n)^{\Gamma} \ : \ \bL^L(\alpha^*(V_n),\Uc_K)\ \to \ 
\bL^L(\alpha^*(V_{n-1}),\Uc_K)  \ .\]
Source and target of this map are contractible 
(for example by~\cite[II Lemma 1.5]{lms}), so the map
$(p_n)^{\Gamma}$ is a weak equivalence. 
But $(p_n)^{\Gamma}$ is also a locally trivial fiber bundle, 
hence a Serre fibration.

Now we know that $p_n$ is a $(K\times G)$-acyclic fibration, 
and moreover $\bL(V_n,\Uc_K)$ is $(K\times G)$-cofibrant for every $n\geq 0$, 
for example by~\cite[Prop.\,1.1.19 (ii)]{schwede-global}.
Moreover, the space $\bL(\Vc,\Uc_K)$ is the inverse limit
of the tower of restriction maps $p_n : \bL(V_n,\Uc_K)\to \bL(V_{n-1},\Uc_K)$,
by Proposition \ref{prop:properties of L}. 
So the following Proposition~\ref{prop:inverse limit homotopy},
applied to the projective model structure on $(K\times G)$-spaces
(compare \cite[Prop.\,B.7]{schwede-global})
shows that the restriction map 
\[ \rho^{\Vc}_V  \ : \ \bL(\Vc,\Uc_K) \ \to \ \bL(V,\Uc_K)  \]
is a $(K\times G)$-homotopy equivalence.
\end{proof}

\begin{prop}\label{prop:inverse limit homotopy}
  Let~$\Cc$ be a topological model category and 
\[  \dots \ \xra{} \ X_n \ \ \xra{\ p_n\ } \ \dots \ \xra{} \ X_2 \ \xra{\ p_2\ } 
\ X_1 \ \xra{\ p_1\ } \ X_0 \]
an inverse system of acyclic fibrations between cofibrant objects.
Then the canonical map 
\[ p_\infty\ : \ X_\infty \ = \ \lim_n \, X_n \ \to \ X_0 \]
is a homotopy equivalence.
\end{prop}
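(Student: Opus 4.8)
The plan is to exhibit an explicit homotopy inverse to $p_\infty$. Write $q_n\colon X_\infty\to X_n$ for the canonical projections out of the limit, so that $p_n\circ q_n=q_{n-1}$ and $q_0=p_\infty$, and set $r_n=p_1\circ\dots\circ p_n\colon X_n\to X_0$ (with $r_0=\Id_{X_0}$), so that $r_n\circ q_n=p_\infty$ for every $n$. First I would choose, for each $n\ge 1$, a section $s_n\colon X_{n-1}\to X_n$ of the acyclic fibration $p_n$; such a section exists by lifting $\Id_{X_{n-1}}$ against $p_n$, using that $X_{n-1}$ is cofibrant. Putting $\sigma_n=s_n\circ\dots\circ s_1\colon X_0\to X_n$ (and $\sigma_0=\Id_{X_0}$) gives $p_n\circ\sigma_n=\sigma_{n-1}$, so the $\sigma_n$ form a cone over the tower and induce a map $\sigma\colon X_0\to X_\infty$ with $q_n\circ\sigma=\sigma_n$; in particular $p_\infty\circ\sigma=q_0\circ\sigma=\sigma_0=\Id_{X_0}$. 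Thus $\sigma$ is a one-sided inverse on the nose, and the content of the proof is the homotopy $\sigma\circ p_\infty\simeq\Id_{X_\infty}$.

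The level-$n$ component of $\Id_{X_\infty}$ is $q_n$, while that of $\sigma\circ p_\infty$ is $\sigma_n\circ p_\infty=(\sigma_n\circ r_n)\circ q_n$. I would therefore construct, by induction on $n$, homotopies $G^n\colon X_n\otimes[0,1]\to X_n$ from $\Id_{X_n}$ to $\sigma_n\circ r_n$ that are strictly compatible with the tower in the sense that $p_n\circ G^n=G^{n-1}\circ(p_n\otimes[0,1])$. For $n=0$ take $G^0$ the constant homotopy at $\Id_{X_0}$. For the inductive step the map $\bar G:=G^{n-1}\circ(p_n\otimes[0,1])\colon X_n\otimes[0,1]\to X_{n-1}$ is already prescribed, and I seek a lift of $\bar G$ through $p_n$ whose restrictions to $X_n\otimes\{0\}$ and $X_n\otimes\{1\}$ are $\Id_{X_n}$ and $\sigma_n\circ r_n=s_n\circ\sigma_{n-1}\circ r_n$ respectively. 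Using $p_n\circ s_n=\Id$, $r_{n-1}\circ p_n=r_n$, and the endpoint conditions for $G^{n-1}$, one checks directly that $p_n\circ\Id_{X_n}=\bar G|_{0}$ and $p_n\circ(\sigma_n\circ r_n)=\bar G|_{1}$, so these data assemble into a commutative square with left edge $X_n\otimes\{0,1\}\hookrightarrow X_n\otimes[0,1]$ and right edge $p_n$.

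The essential point — and the step where a naive argument would most easily go wrong, were one to try to build the $G^n$ by hand by concatenating individual deformation retractions — is that the compatibility $p_n\circ G^n=\bar G$ must hold \emph{on the nose}, since otherwise the $G^n$ fail to descend to the limit; concatenating homotopies introduces reparametrisations that destroy this strictness. The model-categorical lifting property resolves exactly this difficulty: because $X_n$ is cofibrant and the model structure is topological, the map $X_n\otimes\{0,1\}\to X_n\otimes[0,1]$ — obtained by applying the pushout-product axiom to the cofibration $\emptyset\to X_n$ and the generating cofibration $\partial D^1\hookrightarrow D^1$ of spaces — is a cofibration, while $p_n$ is an acyclic fibration. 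Hence a lift $G^n$ of the square above exists, and it automatically satisfies both endpoint conditions and the strict identity $p_n\circ G^n=\bar G$, with no reparametrisation needed.

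Finally I would assemble the pieces. Setting $K_n=G^n\circ(q_n\otimes[0,1])\colon X_\infty\otimes[0,1]\to X_n$, the compatibility of the $G^n$ gives $p_n\circ K_n=G^{n-1}\circ((p_n\circ q_n)\otimes[0,1])=K_{n-1}$, so the $K_n$ form a cone over the tower and induce a unique $K\colon X_\infty\otimes[0,1]\to X_\infty$ with $q_n\circ K=K_n$. Evaluating at the endpoints, $q_n\circ(K|_0)=G^n(q_n,0)=q_n$ and $q_n\circ(K|_1)=G^n(q_n,1)=\sigma_n\circ r_n\circ q_n=\sigma_n\circ p_\infty=q_n\circ(\sigma\circ p_\infty)$ for all $n$, so by the universal property of the limit $K|_0=\Id_{X_\infty}$ and $K|_1=\sigma\circ p_\infty$. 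Together with the identity $p_\infty\circ\sigma=\Id_{X_0}$ established above, this exhibits $\sigma$ as a homotopy inverse of $p_\infty$, proving that $p_\infty$ is a homotopy equivalence.
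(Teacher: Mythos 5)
Your proof is correct, and the main step takes a genuinely different route from the paper's. The two arguments agree up to the construction of the strict section: your $\sigma$ is the paper's $s_\infty$, built from the same lifts $s_n$, and in both cases the whole content is the homotopy $\sigma\circ p_\infty\simeq\Id_{X_\infty}$. For that step the paper first produces \emph{vertical} homotopies $H_n:\Id_{X_n}\simeq s_n\circ p_n$, chosen (again by lifting against $p_n$) so that $p_n\circ H_n$ is the constant homotopy at $p_n$, and then builds homotopies $K_n$ on $X_\infty$ by an explicit staircase concatenation: $K_n$ is constant on $[0,1/(n+1)]$, runs a linearly reparametrized $H_n$ on $[1/(n+1),1/n]$, and equals $s_n\circ K_{n-1}$ on $[1/n,1]$; verticality of $H_n$ is exactly what makes the strict compatibility $p_n\circ K_n=K_{n-1}$ hold, and well-definedness has to be checked by hand at the gluing points $t=1/(n+1)$ and $t=1/n$. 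You instead absorb the strictness requirement into the lifting problem itself: at each stage you lift $G^{n-1}\circ(p_n\otimes[0,1])$ through the acyclic fibration $p_n$ along the cofibration $X_n\otimes\{0,1\}\to X_n\otimes[0,1]$, obtaining level homotopies $G^n:\Id_{X_n}\simeq\sigma_n\circ r_n$ with $p_n\circ G^n=G^{n-1}\circ(p_n\otimes[0,1])$ on the nose, and only afterwards pull back along $q_n$ and pass to the limit. Your commutativity checks for the lifting square ($\bar G|_0=p_n$ and $\bar G|_1=\sigma_{n-1}\circ r_n=p_n\circ\sigma_n\circ r_n$) are exactly right, and both proofs use the same model-categorical inputs: cofibrancy of the $X_n$, the pushout-product axiom making $X_n\otimes\partial D^1\to X_n\otimes D^1$ a cofibration, and lifting against the acyclic fibrations $p_n$. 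What your version buys is that strict tower-compatibility comes for free from the lifting square, with no reparametrization or interval bookkeeping, and that your homotopies live on the finite levels $X_n$, so the assembly over $X_\infty$ is a purely formal limit argument; the paper's staircase is more hands-on and constructive given the chosen $H_n$, but at the cost of verifying the piecewise formula is consistent at the interval endpoints.
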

\begin{proof}
Since $p_n$ is an acyclic fibration and $X_{n-1}$ is cofibrant, 
we can find a section $s_n:X_{n-1}\to X_n$ to~$p_n$ by choosing a lift in the
left square below:
\[ \xymatrix@C=15mm{ 
\emptyset \ar[r]\ar[d] & X_n\ar[d]^{p_n} &
X_n\times\{0,1\}\ar[d]\ar[r]^-{\Id + s_n\circ p_n} & X_n\ar[d]^{p_n}\\
X_{n-1}\ar@{=}[r]\ar@{-->}[ur]^(.4){s_n} &X_{n-1} &
X_n\times[0,1]\ar[r]_-{p_n\circ \text{proj}}\ar@{-->}[ur]^(.4){H_n}& X_{n-1} 
} \]
Since $X_n$ is cofibrant, the inclusion $X_n\times\{0,1\}\to X_n\times[0,1]$
is a cofibration, and a choice of lift in the right diagram provides a homotopy
\[ H_n \ : \ X_n\times [0,1]\ \to \ X_n \]
from the identity to $s_n\circ p_n$ such that 
$p_n\circ H_n:X_n\times [0,1]\to X_{n-1}$
is the constant homotopy from~$p_n$ to itself.
The morphisms
\[ s_n\circ s_{n-1}\circ \cdots \circ s_1 \ : \  X_0\ \to \ X_n \]
are then compatible with the inverse system defining $X_\infty$, 
so they assemble into a morphism
\[ s_\infty\ : \ X_0\ \to \ {\lim}_n\, X_n \]
to the inverse limit, and $s_\infty$ is a section to $p_\infty$.

We claim that the composite~$s_\infty\circ p_\infty$ is homotopic to the identity.
To prove the claim we construct compatible homotopies
\[ K_n \ : \ X_\infty \times [0,1] \ \to \ X_n  \]
by induction on~$n$ satisfying
\begin{enumerate}[(i)]
\item $p_n\circ K_n=K_{n-1}$, 
\item $K_n(-,t)=  p^{(n)}_\infty$, the canonical morphism $X_\infty\to X_n$,
for all $t\in [0,\genfrac{}{}{}{}{1}{n+1}]$, and
\item $K_n(-,1)= s_n\circ s_{n-1}\circ \cdots s_1 \circ p_\infty$.
\end{enumerate}
The induction starts by defining $K_0$
as the constant homotopy from $p_\infty:X_\infty\to X_0$ to itself.
Now we assume $n\geq 1$ and suppose that the homotopies $K_0,\dots,K_{n-1}$
have already been constructed. 
We exploit that the functor $X_\infty\times-$ is a left adjoint,
so $X_\infty\times[0,1]$ is the pushout of the objects
$X_\infty\times[0,\genfrac{}{}{}{}{1}{n+1}]$,
$X_\infty\times[\genfrac{}{}{}{}{1}{n+1},\genfrac{}{}{}{}{1}{n}]$ and
$X_\infty\times [\genfrac{}{}{}{}{1}{n},1]$ along two copies of $X_\infty$,
embedded via 
the points $\genfrac{}{}{}{}{1}{n+1}$ and $\genfrac{}{}{}{}{1}{n}$ of $[0,1]$.
So we can define~$K_n$ by
\[ K_n(-,t) \ = \
\begin{cases}
\qquad p^{(n)}_\infty  & \text{\ for $t\in [0,\genfrac{}{}{}{}{1}{n+1}]$,} \\
H_n(-,\, n(n+1)t- n)\circ p^{(n)}_\infty  & 
\text{\ for $t\in [\genfrac{}{}{}{}{1}{n+1},\genfrac{}{}{}{}{1}{n}]$, and} \\  
\quad s_n\circ K_{n-1}(-,t) & \text{\ for $t\in [\genfrac{}{}{}{}{1}{n},1]$.} 
\end{cases}
\]
This is well-defined at the intersections of the intervals because
\[ H_n\left(-,\, n(n+1)\genfrac{}{}{}{}{1}{n+1}- n\right)\circ p^{(n)}_\infty \ = \ 
 H_n(-,0)\circ p^{(n)}_\infty  \ = \ p^{(n)}_\infty \]
and
\begin{align*}
   H_n\left(-,\, n(n+1)\genfrac{}{}{}{}{1}{n}- n\right)\circ p^{(n)}_\infty \ &= \ 
 H_n(-,1)\circ p^{(n)}_\infty  \ = \ s_n\circ p_n\circ p^{(n)}_\infty\\
&= \ s_n\circ  p^{(n-1)}_\infty\ = \ s_n\circ  K_{n-1}(-, 1/n) 
\end{align*}
Then condition~(i) holds because
\begin{align*}
p_n\circ K_n(-,t) \ &= \
\begin{cases}
\ p_n\circ p^{(n)}_\infty  & \text{\ for $t\in [0,\genfrac{}{}{}{}{1}{n+1}]$,} \\
\ p_n\circ  H_n(-,\, n(n+1)t- n)\circ p^{(n)}_\infty   & \text{\ for $t\in [\genfrac{}{}{}{}{1}{n+1},\genfrac{}{}{}{}{1}{n}]$, and} \\  
\ p_n\circ s_n \circ K_{n-1}(-,t) & \text{\ for $t\in [\genfrac{}{}{}{}{1}{n},1]$,} 
\end{cases}\\
&= \
\begin{cases}
\ p^{(n-1)}_\infty  & \text{\ for $t\in [0,\genfrac{}{}{}{}{1}{n}]$,} \\
\ K_{n-1}(-,t) & \text{\ for $t\in [\genfrac{}{}{}{}{1}{n},1]$,} 
\end{cases}\\
&= \ K_{n-1}(-,t) \ .
\end{align*}
Now we can finish the proof. By condition~(i) the homotopies~$K_n$ 
are compatible, so they assemble 
into a morphism $K_\infty:X_\infty\times [0,1]\to X_\infty$.
Property~(ii) shows that $K_\infty$ starts with the identity of $X_\infty$
and property~(iii) ensures that $K_\infty$ ends 
with the morphism  $s_\infty\circ p_\infty$.
So $s_\infty$ is a homotopy inverse to~$p_\infty$.
\end{proof}

\begin{rk}\label{rk:L contractible}
If we specialize Proposition~\ref{prop:EKG infinite} 
to $\Vc=\Uc_K=\mR^\infty$ and ignore all group actions, it shows in particular
that the restriction map $\Lc=\bL(\mR^\infty,\mR^\infty)\to \bL(0,\mR^\infty)$
is a homotopy equivalence to a one-point space. In other words,
the underlying space of $\Lc$ is contractible.
\end{rk}

\end{appendix}


\begin{thebibliography}{MM}

\bibitem{blumberg-cohen-schlichtkrull}
A J Blumberg, R L Cohen, C Schlichtkrull, 
{\em Topological Hochschild homology of Thom spectra and the free loop space.}
Geom.\ Topol.\ {\bf 14} (2010), no.\ 2, 1165--1242. 

\bibitem{boardman-vogt-homotopy everything}
J M Boardman, R M Vogt,
{\em Homotopy-everything H-spaces.} 
Bull.\ Amer.\ Math.\ Soc.\ {\bf 74} (1968), 1117--1122. 

\bibitem{boardman-vogt-homotopy invariant}
J M Boardman, R M Vogt,
{\em Homotopy invariant algebraic structures on topological spaces.} 
Lecture Notes in Mathematics, Vol.\ 347. Springer-Verlag, 1973. x+257 pp. 

\bibitem{boehme}
B B{\"o}hme, {\em Global model structures for $*$-modules.}
To appear in Homology Homotopy Appl. \verb!arXiv:1607.00144! 

\bibitem{broecker-tomDieck}
T Br{\"o}cker, T tom Dieck, {\em Representations of compact Lie groups.}
Graduate Texts in Math., Vol.\ 98, Springer-Verlag, 1985. x+313 pp. 

\bibitem{dugger-replacing}
D Dugger, {\em Replacing model categories with simplicial ones.}
Trans.\ Amer.\ Math.\ Soc. {\bf 353} (2001), no.\ 12, 5003--5027.

\bibitem{dwyer-spalinski}
W G Dwyer, J Spalinski, 
{\em Homotopy theories and model categories.} 
In: Handbook of algebraic topology, ed.\ I.\,M.\,James, Elsevier (1995), 73--126. 

\bibitem{elmendorf-orbit}
A D Elmendorf, {\em Systems of fixed point sets.}
Trans.\ Amer.\ Math.\ Soc. {\bf 277} (1983), no.\ 1, 275--284.

\bibitem{EKMM}
A D Elmendorf, I Kriz, M A Mandell, J P May,
{\em Rings, modules, and algebras in stable homotopy theory.
{W}ith an appendix by M Cole.}
Mathematical Surveys and Monographs, {\bf 47}, American Mathematical Society,
Providence, RI, 1997, xii+249 pp.

\bibitem{gepner-henriques}
D Gepner, A Henriques, {\em Homotopy theory of orbispaces.}
\verb!arXiv:math.AT/0701916!

\bibitem{hovey-book}
M Hovey, {\em Model categories.} Mathematical Surveys and Monographs,
vol.\ 63, American Mathematical Society, Providence, RI, 1999, xii+209 pp.

\bibitem{koerschgen}
A K{\"o}rschgen, {\em A comparison of two models of orbispaces.}
Homology Homotopy Appl.\ {\bf 20} (2018), no.\ 1, 329--358. 

\bibitem{lewis-thesis}
L G Lewis, Jr, {\em The stable category and generalized Thom spectra.}
Ph.D.\ thesis, University of Chicago, 1978

\bibitem{lewis-may-mcclure-classifying}
L G Lewis, Jr, J P May, J E McClure,
{\em Classifying $G$-spaces and the Segal conjecture.} 
Current trends in algebraic topology, Part 2 (London, Ont., 1981), pp. 165--179,
CMS Conf.\ Proc., 2, Amer.\ Math.\ Soc., Providence, RI, 1982 

\bibitem{lms}
L G Lewis, Jr, J P May, M Steinberger,
{\em Equivariant stable homotopy theory.} Lecture Notes in Mathematics,
Vol.\ 1213, Springer-Verlag, 1986. x+538 pp.

\bibitem{lind-diagram}
J A Lind, {\em Diagram spaces, diagram spectra and spectra of units.}
Algeb.\ Geom.\ Topol.\ {\bf 13} (2013), 1857--1935.

\bibitem{may-quinn-ray}
J P May, {\em $E\sb{\infty }$ ring spaces and $E\sb{\infty }$ ring spectra.}
With contributions by F Quinn, N Ray, and J Tornehave. 
Lecture Notes in Mathematics, Vol.\ 77. Springer-Verlag, 1977. 268 pp.

\bibitem{mccord}
M C McCord,
{\em Classifying spaces and infinite symmetric products.}
Trans.\ Amer.\ Math.\ Soc.\ {\bf 146} (1969), 273--298. 

\bibitem{piacenza}
R J Piacenza, {\em Homotopy theory of diagrams and CW-complexes over a category.}
Canad.\ J.\ Math. {\bf 43} (1991), no.\ 4, 814--824. 

\bibitem{Q}
D G Quillen, {\em Homotopical algebra}. Lecture Notes in Mathematics,
Vol.\ 43, Springer-Verlag, 1967. iv+156 pp.

\bibitem{schwede-global}
S Schwede, {\em Global homotopy theory.}
New Mathematical Monographs {\bf 34}.
Cambridge University Press, Cambridge, 2018. xvi+828 pp.

\end{thebibliography}
\end{document}